\documentclass[12pt,oneside,a4paper]{amsart}
\usepackage[british]{babel}

\usepackage{upgreek}
\usepackage{graphicx}
\usepackage{amscd}
\usepackage{amsmath}
\usepackage{amsfonts}
\usepackage{amssymb}
\usepackage{mathrsfs}
\usepackage{comment}
\excludecomment{mycomment}
\usepackage{color}
\usepackage[usenames,dvipsnames,table,xcdraw]{xcolor}
\usepackage{pdfsync}
\usepackage{bbm}
\usepackage{dsfont}
\usepackage[colorlinks=true]{hyperref}
\usepackage{enumerate}
\usepackage{booktabs}
\usepackage{float}
\usepackage{wrapfig}
\usepackage{caption}
\usepackage{subcaption}
\usepackage{longtable}
\usepackage{listings}
\usepackage[T1]{fontenc}
\usepackage[scaled]{beramono}
\usepackage{tikz}
\usepackage{pgffor}
\usepackage[all]{xy}
\usepackage{bbold}
\usepackage{enumerate}
\usepackage{enumitem}
\definecolor{trp}{rgb}{1,1,1}

\definecolor{red}{rgb}{1,0,.2}

\usepackage{amsthm}
\theoremstyle{plain}
\newtheorem{theorem}{Theorem}[section]

\newtheorem*{acknowledgement}{Acknowledgments}

\newtheorem{fact}[theorem]{Fact}

\newtheorem{corollary}[theorem]{Corollary}

\newtheorem{definition}[theorem]{Definition}

\newtheorem{lemma}[theorem]{Lemma}

\newtheorem{proposition}[theorem]{Proposition}

\numberwithin{equation}{section}

\newcommand{\N}{\mathbb{N}}

\newcommand{\ii}{\mathbf{i}}

\newcommand{\iiv}{\overline{\imath}}
\newcommand{\jjv}{\overline{\jmath}}

\newcommand*{\arabicdec}[1]{\the\numexpr\value{#1}\relax}

\linespread{1.15}

\usepackage{anysize}
\papersize{24.5cm}{16.2006cm}

\marginsize{1.5cm}{1cm}{0cm}{0cm}

\usepackage{caption}

\definecolor{blue}{rgb}{0,0,1}

\definecolor{red}{rgb}{1,0,.7}

\usepackage{tikz}
\usetikzlibrary{cd,decorations.pathreplacing,positioning,arrows}

\begin{document}
\title[CPLIFS]{Piecewise linear iterated function systems on the line of overlapping construction}

\author{R. D\'aniel Prokaj}
\address{ R. D\'aniel Prokaj, Budapest University of Technology and Economics, Hungary} \email{prokajrd@math.bme.hu}

\author{K\'aroly Simon}
\address{K\'aroly Simon, Budapest University of Technology and Economics, MTA-BME Stochastics Research Group, P.O. Box 91, 1521 Budapest, Hungary} \email{simonk@math.bme.hu}

\begin{abstract}
In this paper we consider Iterated Function Systems (IFS) on the real line consisting of continuous piecewise linear functions.
We assume some bounds on the contraction ratios of the functions, but we do not assume any separation condition. Moreover, we 
do not require that the functions of the IFS are injective, but we assume that their derivatives are separated from zero.
 We prove that if we fix all the slopes but perturb all other parameters, then for all parameters outside of an exceptional set of less than full packing dimension, the Hausdorff dimension of the attractor is equal to the exponent which comes from the most natural system of covers of the attractor.
\end{abstract}
\date{\today}

\maketitle

\tableofcontents

\thispagestyle{empty}
\section{Introduction}

\subsection{Attractors on the line}
Iterated Function Systems (IFS) on the line  consist of finitely many strictly contracting self-mappings of $\mathbb{R}$. It was proved by Hutchinson \cite{hutchinson1981fractals}
that
for every IFS $\mathcal{F}=\left\{f_k\right\}_{k=1}^{m}$ there is a unique non-empty compact set $\Lambda^{\mathcal{F}}$ which is
called the attractor of the IFS $\mathcal{F}$ and defined by
 \begin{equation}\label{cr65}
   \Lambda^{\mathcal{F}}=\bigcup\limits_{k=1}^{m}f_k(\Lambda^{\mathcal{F}}).
 \end{equation}

 It is easy to see that for every IFS $\mathcal{F}$ there exists a unique "smallest" non-empty compact interval $I^{\mathcal{F}}$  which is sent into itself by all the mappings of $\mathcal{F}$:
 \begin{equation}
 \label{cr22}
I^{\mathcal{F}}:=\bigcap\left\{ J:
J\subset \mathbb{R}\mbox{ compact interval with }
f_k(J)\subset J \mbox{, for all }k\in[m]
 \right\},
 \end{equation}
 where $[m]:=\left\{ 1,\dots  ,m \right\}$. 
It is easy to see that 
\begin{equation}
\label{cr15}
\Lambda^{\mathcal{F}}=\bigcap\limits_{n=1}^{\infty}
\bigcup\limits_{(i_1,\dots  ,i_n)\in[m]^n}
I_{i_1\dots  i_n}^{\mathcal{F}} ,
\end{equation}
where $I_{i_1\dots  i_n}^{\mathcal{F}}:=
f_{i_1\dots  i_n}(I^{\mathcal{F}})$ are the \texttt{cylinder intervals}, and we use the common shorthand notation $f_{i_1\dots  i_n}:=f_{i_1}\circ\cdots \circ f_{i_n}$ for an
$(i_1,\dots  ,i_n)\in [m]^n$.

The following three IFS families, each defined on $\mathbb{R}$, appear on Figure \ref{cr66} . 

\begin{enumerate}
[label={\bf (a)}]
    \item Self-similar IFS:  
$\mathcal{F}=\left\{f_i(x)=\rho_ix+t_i\right\}_{k=1}^{m}$, where $\rho_i\in(-1,1)\setminus\left\{0\right\}$ and $t_i\in\mathbb{R}$.
    \item Hyperbolic IFS: $\mathcal{F}=\left\{ f_1,\dots  ,f_m \right\}$,  where each $f_k:J\to J$ is a $C^{1+\varepsilon }(J)$ contracting self-mappings of a non-empty open interval $J\subset \mathbb{R}$.
    \item We introduce the \texttt{Continuous Piecewise Linear Iterated Function Systems (CPLIFS)}: These are IFSs of the form
    $\mathcal{F}=\left\{ f_1,\dots  ,f_m \right\}$, where  
    $f_k:\mathbb{R}\to\mathbb{R}$ are continuous, piececewise linear contractions (not  necessarily injective) with all slopes different from zero. 
\end{enumerate}
\begin{figure}[H]
  \centering
  \includegraphics[width=13cm]{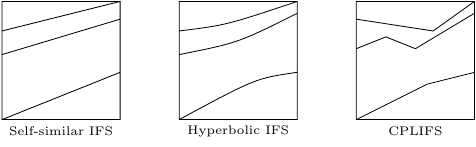}
  \caption{}\label{cr66}
\end{figure}

We note that the interval $I^{\mathcal{F}}$
introduced in \eqref{cr22} is the convex hull of the attractor in the first two cases, but not necessarily for non-injective CPLIFSs.

 The $t$-dimensional Hausdorff measure of an open Borel set $A\subset\mathbb{R}$ is  
\begin{equation}
\label{cr20}
\mathcal{H}^t(A):=\sup\limits_{\delta>0}
\left\{ 
\inf\left\{ 
\sum_{i=1}^{\infty}
|A_i|^t:
A\subset \bigcup\limits_{i=1}^{\infty} A_i,\ |A_i|<\delta 
 \right\}
 \right\}.
\end{equation}
Let $\mathcal{F}$ be an arbitrary IFS on the line.
We can give a natural upper bound on  $\mathcal{H}^t(\Lambda^{\mathcal{F}} )$ if we substitute the covering system made of the cylinder intervals $\left\{ I^{\mathcal{F}}_{i_1\dots  i_n}  \right\}_{(i_1\dots  i_n)\in[m]^n}$ in the place of the most efficient cover in \eqref{cr20},
just like in \eqref{cr15}. For this particular system the right hand side of \eqref{cr20} is related to the sequence of sums $\left\{ S_n^t \right\}_{n=1}^{\infty}$
\begin{equation}
\label{cr19}
S_n^t:=\sum_{(i_1\dots  i_n)\in[m]^n}
|I^{\mathcal{F}}_{i_1\dots  i_n}|^t.
\end{equation}
Namely, if the exponential growth rate of $\left\{ S_n^t \right\}_{n=1}^{\infty}$ is negative (positive), then it suggests that $\mathcal{H}^t(\Lambda^{\mathcal{F}})$ is equal to zero (infinity). Therefore the minimum of the dimension of the ambient space 
and the value of the exponent $t$ for which this exponential growth rate is zero is a good natural guess for the Hausdorff dimension of $\Lambda^{\mathcal{F}}$.
This motivates the introduction of the exponential growth rate  
\begin{equation}\label{cr64}
    \Phi^{\mathcal{F}}(s):=\limsup_{n\rightarrow\infty}\frac{1}{n}\log \sum_{i_1\dots i_n} |I^{\mathcal{F}}_{i_1\dots i_n}|^s.
\end{equation}
It is easy to see that we can obtain  $\Phi ^{\mathcal{F}}(s)$
above as a special case of the non-additive upper capacity topological pressure introduced by Barreira in \cite[p. 5]{barreira1996non}. 
According to \cite[Theorem 1.9]{barreira1996non}, the zero of $\Phi^{\mathcal{F}}(s)$ is well defined
\begin{equation}\label{cr61}
    s_{\mathcal{F}}:=(\Phi^{\mathcal{F}})^{-1}(0).
\end{equation}

Barreira considered generalized Moran constructions \cite[Section 2.1.2]{barreira1996non}. Condition (b) of such a construction requires that the cylinder intervals $\left\{ I^{\mathcal{F}}_{i_1\dots  i_n} \right\}$ are sufficiently well separated. This separation assumption does not hold for the constructions considered in this paper, thus not all of his results apply.
In the last inequality of 
\cite[Theorem 2.1 part (a)]{barreira1996non}
Barreira proves that the upper box dimension of $\Lambda ^{\mathcal{F}}$ is less than or equal to the root of the non-additive upper capacity topological pressure.
It is easy to see that its proof does not require any separation conditions on the cylinder intervals. That is 
\begin{corollary}[Barreira]\label{cr12}
  For any IFS $\mathcal{F}$ on the line 
  \begin{equation}
\label{cr13}
\overline{\dim}_{\rm B}  \Lambda ^{\mathcal{F}}\leq
s_{\mathcal{F}.}
\end{equation}
\end{corollary}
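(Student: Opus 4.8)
The plan is to fix an arbitrary $s>s_{\mathcal{F}}$, prove $\overline{\dim}_{\rm B}\Lambda^{\mathcal{F}}\le s$, and then let $s\downarrow s_{\mathcal{F}}$. Two preliminary remarks. Writing $\rho:=\max_{k\in[m]}\mathrm{Lip}(f_k)<1$, the inclusions $f_k(I^{\mathcal{F}})\subset I^{\mathcal{F}}$ from \eqref{cr22} give $|I^{\mathcal{F}}_{i_1\dots i_n}|\le\rho^{\,n}|I^{\mathcal{F}}|$ for every word $i_1\dots i_n$; in particular cylinder lengths along each branch decrease to $0$, and comparing $S^s_n$ with $S^{s'}_n$ for $s>s'$ via this bound shows that $\Phi^{\mathcal{F}}$ is strictly decreasing (the comparison introduces a factor $(\rho^n|I^{\mathcal F}|)^{s-s'}$, whose exponential rate $(s-s')\log\rho$ is strictly negative). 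Since $\Phi^{\mathcal{F}}(s_{\mathcal{F}})=0$ by \eqref{cr61}, this forces $\Phi^{\mathcal{F}}(s)<0$, so $S^s_n$ decays geometrically and
\begin{equation*}
B:=\sum_{n\ge0}S^s_n=\sum_{n\ge0}\sum_{(i_1\dots i_n)\in[m]^n}\bigl|I^{\mathcal{F}}_{i_1\dots i_n}\bigr|^{s}<\infty .
\end{equation*}

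Next I would set up the standard stopping-time (Moran) cover at scale $r\in(0,|I^{\mathcal{F}}|)$. Put
\begin{equation*}
\mathcal{T}_r:=\bigl\{(i_1\dots i_n):|I^{\mathcal{F}}_{i_1\dots i_n}|\ge r\bigr\},\qquad
\Xi_r:=\bigl\{(i_1\dots i_n):|I^{\mathcal{F}}_{i_1\dots i_n}|<r\le|I^{\mathcal{F}}_{i_1\dots i_{n-1}}|\bigr\}.
\end{equation*}
Because each $f_{i_n}$ is a contraction, $|I^{\mathcal{F}}_{i_1\dots i_n}|\le|I^{\mathcal{F}}_{i_1\dots i_{n-1}}|$, so $\mathcal{T}_r$ is a subtree of $\bigcup_{n\ge0}[m]^n$; by the bound $|I^{\mathcal{F}}_{i_1\dots i_n}|\le\rho^n|I^{\mathcal{F}}|$ it has bounded depth, hence is finite, and $\Xi_r$ is exactly the (finite) set of children of nodes of $\mathcal{T}_r$ that fall outside $\mathcal{T}_r$. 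Consequently every infinite word has a unique prefix in $\Xi_r$, so by \eqref{cr15} the family $\{I^{\mathcal{F}}_w:w\in\Xi_r\}$ is a finite cover of $\Lambda^{\mathcal{F}}$ by intervals of length $<r$ (degenerate one-point cylinders are harmless and contribute $0$ to the sums below).

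The counting step is then routine. Let $N_r(\cdot)$ denote the number of cells of the grid $r\mathbb{Z}$ that meet a given set; an interval of length $<r$ meets at most two such cells, so $N_r(\Lambda^{\mathcal{F}})\le2|\Xi_r|\le2m|\mathcal{T}_r|$. Since $|I^{\mathcal{F}}_u|\ge r$ for $u\in\mathcal{T}_r$ and $s>0$,
\begin{equation*}
|\mathcal{T}_r|=\sum_{u\in\mathcal{T}_r}1\le\sum_{u\in\mathcal{T}_r}\Bigl(\frac{|I^{\mathcal{F}}_u|}{r}\Bigr)^{s}\le r^{-s}\sum_{n\ge0}S^s_n=B\,r^{-s}.
\end{equation*}
Hence $N_r(\Lambda^{\mathcal{F}})\le2mB\,r^{-s}$, so $\frac{\log N_r(\Lambda^{\mathcal{F}})}{-\log r}\le s+\frac{\log(2mB)}{-\log r}\to s$ as $r\to0$, i.e. $\overline{\dim}_{\rm B}\Lambda^{\mathcal{F}}\le s$; letting $s\downarrow s_{\mathcal{F}}$ yields \eqref{cr13}.

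Note that nowhere is injectivity of the $f_k$ or any separation of the cylinders used, consistently with the remark preceding the statement. The only genuine content is the passage from the asymptotic quantity $\Phi^{\mathcal{F}}$ to a bound on $N_r$ that is \emph{uniform in $r$}: this is precisely where $s>s_{\mathcal{F}}$ enters, through the summability $\sum_n S^s_n<\infty$, which simultaneously controls $\sum_{u\in\mathcal{T}_r}|I^{\mathcal{F}}_u|^s$ for all $r$. I would expect the only mild obstacles to be (i) verifying that $\Phi^{\mathcal{F}}$ is strictly decreasing, so that $s>s_{\mathcal{F}}$ gives $\Phi^{\mathcal{F}}(s)<0$ rather than merely $\le0$ (which would not yield summability), and (ii) the bookkeeping around $\mathcal{T}_r$ and $\Xi_r$ — finiteness, the covering property, and the factor-$m$ passage from $\mathcal{T}_r$ to $\Xi_r$.
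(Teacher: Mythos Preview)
Your argument is correct. The paper itself does not supply a proof of this corollary: it simply cites Barreira \cite{barreira1996non}, observing that the last inequality in his Theorem~2.1(a) goes through without the separation assumption imposed elsewhere in that reference. What you have written is a self-contained version of the standard Moran stopping-time argument, which is precisely what Barreira's proof specializes to in this setting. The two points you flag as potential obstacles are handled cleanly: the strict monotonicity of $\Phi^{\mathcal{F}}$ follows exactly as you indicate from $|I^{\mathcal{F}}_{i_1\dots i_n}|\le\rho^n|I^{\mathcal{F}}|$, and the bookkeeping around $\mathcal{T}_r$ and $\Xi_r$ is correct (non-increase of cylinder lengths along prefixes is all that is needed for existence and uniqueness of the stopping time).
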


For a given IFS $\mathcal{F}$ on the line we name $s_{\mathcal{F}}$
the \texttt{natural dimension of the system}.
  It is easy to see that
in the self-simiar case $s_{\mathcal{F}}$ is the solution of the self-similar equation
\begin{equation}\label{cr57}
\sum_{k=1}^{m} |\rho_k|^{s_{\mathcal{F}}}=1.
\end{equation}
We call $s_{\mathcal{F}}$ the similarity dimension in this case. 
If $\mathcal{F}$ is a hyperbolic system, then $s_{\mathcal{F}}$
is the root of the so-called pressure formula (see \cite{przytycki2010conformal}). In both cases the Open Set Condition (OSC) implies that
\begin{equation}
\label{cr16}
\dim_{\rm H}  \Lambda^{\mathcal{F}} =\min\left\{ s_{\mathcal{F}},1 \right\}.
\end{equation}
The cylinder intervals of the attractor of a system that satisfies the OSC are well separated 
(see \cite[p. 35]{falconer1997techniques}).
However, less strict separation conditions might lead to the same result.
In the self-similar case the celebrated Hochman Theorem \cite[Theorem 1.1]{hochman2014self} yields that 
\eqref{cr16} also follows from the Exponential Searation Condition 
(ESC).

\subsection{Introducing Continuous Piecewise Linear IFSs}

The IFSs we consider in this paper are consisting of piecewise linear functions. Thus their derivatives might change at some points, but they are linear over given intervals of $\mathbb{R}$. We always assume that the functions are continuous, piecewise linear, strongly contracting with non-zero slopes, and that the slopes can only change at finitely many points. This setup enables us to investigate the case of non-injective functions as well, which is not achievable with either hyperbolic or self-similar systems.

A special case of CPLIFSs can be handled using the theory of F. Hofbauer \cite{hofbauer1996box} and P. Raith \cite{raith1994continuity}.
In particular, a family of CPLIFSs satisfying the one dimensional version of the rectangular open set condition.
\begin{definition}
  We say that a CPLIFS $\mathcal{F}=\{ f_k\}_{k=1}^m$ satisfies the \texttt{Interval Open Set Condition} (IOSC) if the first cylinder intervals 
  $\{I^{\mathcal{F}}_k\}_{k=1}^m$ are pairwise disjoint.
\end{definition}
Take a CPLIFS $\mathcal{F}=\{ f_k\}_{k=1}^m$ of injective functions that satisfies the IOSC. Each $f_k$ function can be considered as the local inverse over $I^{\mathcal{F}}_k$ of a strictly expanding map on $\mathbb{R}$. Therefore, the Hausdorff dimension of the attractor of such systems is equal to the root of a topological pressure function \cite{raith1994continuity}. 
Further, according to \cite{hofbauer1996box}, for this special CPLIFS family the Hausdorff and the box dimensions of the attractor are also equal.

In general, when we have no information about the possibble overlapping of the cylinders, we can claim only that \eqref{cr16} holds in some sense typically. Instead of a particular IFS, it is natural to consider its so-called translation family.
 \begin{definition}\label{cv80}
  For every $\pmb{\tau}=(\tau_1, \dots ,\tau_m)\in\mathbb{R}^m$
  we define the translation family of $\mathcal{F}$ by
  \begin{equation}\label{cv79}
   \left\{ \mathcal{F}^{\pmb{\tau}}\right\}_{\pmb{\tau}\in\mathbb{R}^m}, \mbox{ where }
    \mathcal{F}^{\tau}=\{f_1^{\pmb{\tau}}, \dots ,f_m^{\pmb{\tau}}\}
  \end{equation}
  and $f_k^{\pmb{\tau}}(x):=f_k(x)+\tau_k$ for a $k\in[m]$. Moreover,
for an $\mathbf{i}=(i_1, \dots ,i_n)\in\Sigma^*$ we define the function
  \begin{equation}\label{cv75}
    f_{\mathbf{i}}^{\pmb{\tau}}(x):=f_{i_1}^{\pmb{\tau}}\circ\cdots\circ
    f_{i_n}^{\pmb{\tau}}(x).
  \end{equation}
\end{definition}

The classical results \cite{falconer1987hausdorff},
  \cite{falconer1988hausdorff} and \cite{simon2001invariant}
  about the translation families read like this: For typical (in some sense) translations $\pmb{\tau}$,
 the Hausdorff dimension of the translated attractor $\Lambda^{\pmb{\tau}}$ is equal to its natural dimension. Here the sense of typicality depends on the family of the IFS considered. For example, for hyperbolic IFS on the line the typicality above means "typical with respect to the $m$-dimensional Lebesgue measure" \cite{simon2001invariant} (at least in the case when all contractions are stronger than $\frac12$). In this case we say that the Hausdorff dimension and the natural dimension coincide for Lebesgue typical translations.
 On the other hand, Hochman \cite{Hochman_2015} proved a much stronger theorem for the translation family of self-similar IFSs that we call \texttt{Multi Parameter Hochman Theorem} (see Theorem \ref{cv78}). This theorem implies that the
 exceptional set of translations $\pmb{\tau}$ for which
 the ESC does not hold for $\mathcal{F}^{\pmb{\tau}}$ (and consequently
 $\dim_{\rm H} \Lambda^{\pmb{\tau}}$ is different from the similarity dimension), has packing dimension (and in this way also Hausdorff dimension) less than or equal to $m-1$. Remember that the dimension of the parameter space was equal to $m$.
 Motivated by this, we introduce the following terminology related to more general IFS families on the line:

\medskip
 \underline{\textbf{Terminology}:}
 Let $\left\{\mathcal{F}^{\pmb{\lambda}}\right\}_{\pmb{\lambda}\in U}$ be a family of IFSs on the line where the parameter set $U$ is an open
subset of $\mathbb{R}^d$ for some $d\geq 1$.
 We say that a \texttt{property} $\mathfrak{P}$, which makes sense for all elements of this family, \texttt{holds  $\dim_{\rm P} $-typically} if the exceptional set $E$ of those parameters $\pmb{\lambda}\in U$
for which $\mathfrak{P}$ does not hold  satisfies $\dim_{\rm P} E<d$.
That is
the packing dimension of the exceptional set
 $E \subset U$ is smaller than the dimension of the parameter space $U$.
\medskip

Our main result states that $\dim_{\rm P} $-typically the Hausdorff dimension of the attractor of a CPLIFS that satisfies certain regularity conditions is equal to its natural dimension. 
More precisely, 
a CPLIFS $\mathcal{F}$ is called regular if it has sufficiently small contraction ratios and there exists an $N$ such that for all $(i_1,\dots  ,i_N)\in[m]^N$ 
the cylinder interval $I_{i_1\dots  i_N}^{\mathcal{F}}$ 
does not contain any point of non-differentiability of any of the functions of $\mathcal{F}$.
We will show the following:
\begin{enumerate}[label={\bf (a)}]
\item For a regular CPLIFS $\mathcal{F}$ we consider its generated self-similar IFS $\mathcal{S}_{\mathcal{F}}$ (see Section \ref{ct16}). If it satisfies the so-called Exponential Separation Condition (ESC), then its $N$-th iterate system $\mathcal{S}_{\mathcal{F}}^N$ and any subsystem of 
$\mathcal{S}_{\mathcal{F}}^N$ will also satisfy the ESC.
 We point out that we can select a suitable subsystem $\mathscr{S}_{\mathcal{F}}\subset \mathcal{S}_{\mathcal{F}}^N$ and form a graph-directed self-similar IFS from the functions of 
$\mathscr{S}_{\mathcal{F}}$
such that the attractor of this graph-directed system and $\Lambda ^{\mathcal{F}}$ coincide. 
Then we use the Jordan Rapaport Theorem
\cite{jordan2020dimension}
to compute the dimension of this graph-directed attractor which, as we just mentioned, is the same as 
$\Lambda ^{\mathcal{F}}$. In this way we obtain that 
    \begin{equation}\label{cr26}
      \dim_{\rm H} \Lambda^{\mathcal{F}}=s_{\mathcal{F}}.
    \end{equation}
\item We verify in our Main Proposition that for a $\dim_{\rm P} $-typical set of parameters (see our Terminology)
the above mentioned regularity property holds.
Then we apply the multi-parameter Hochman Theorem \cite[Theorem 1.10]{Hochman_2015} to conclude that the ESC is valid also for a $\dim_{\rm P} $-typical set of parameters.
That is by part (a) \eqref{cr26} holds $\dim_{\rm P} $-typically.
\end{enumerate}
To state these conditions more precisely we need some notations.
See Figure \ref{cv47} for guidance.

\begin{figure}[t]
\centering
\includegraphics[width=13cm]{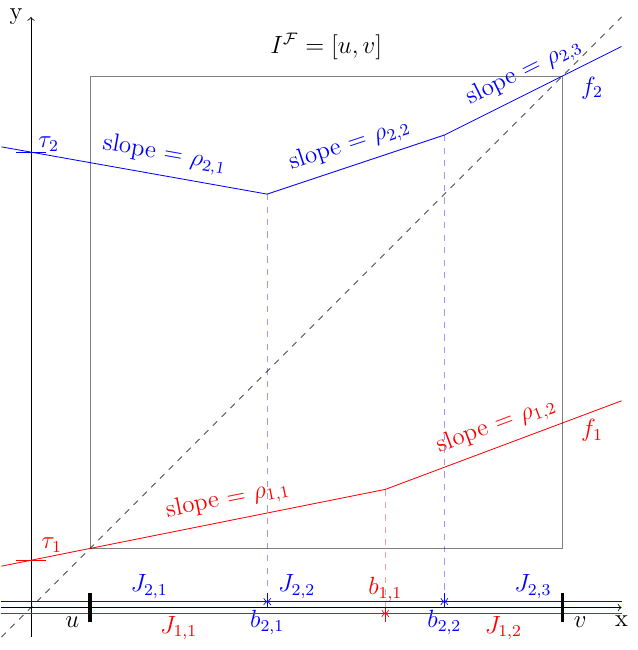}
\caption{A general CPLIFS with the related notations.}\label{cv47}
\end{figure}

\subsection{Notations}\label{cr55}
Let $\mathcal{F}=\left\{f_k\right\}_{k=1}^{m}$ be a CPLIFS and 
$I^{\mathcal{F}} \subset \mathbb{R}$ be the compact interval defined in \eqref{cr22}. For any $k\in [m]$
let $l(k)$ be the number of breaking points $\left\{b_{k,i}\right\}_{i=1}^{l(k)}$ of $f_k$.
They determine the $l(k)+1$ open intervals of linearity
 $\left\{J_{k,i}\right\}_{i=1}^{l(k)+1}$ (see Figure \ref{cv47}). We write $S_{k,i}$ for the contracting similarity on $\mathbb{R}$ that satisfies
 $S_{k,i}|_{J_{k,i}}\equiv f_k|_{J_{k,i}}$.
 We define $\left\{\rho_{k,i}\right\}_{k\in[m],i\in [l(k)+1]}$ and $\left\{t_{k,i}\right\}_{k\in[m],i\in [l(k)+1]}$  such that
\begin{equation}\label{cr56}
  S_{k,i}(x)=\rho_{k,i}x+t_{k,i}.
\end{equation}
 We say that $\mathcal{S}_{\mathcal{F}}:=\left\{S_{k,i}\right\}_{k\in[m],i\in [l(k)+1]}$ is the \texttt{self-similar IFS  generated by the CPLIFS  $\mathcal{F}$}. In order to prove a certain transversality like condition we need to require that the system is strongly contracting.
 Namely, let
 \begin{equation}\label{cr52}
   \rho_k:=\max_{i\in [l(k)]}{|\rho_{k,i}|},\quad \rho_{\max}:=\max_k \rho_k, \mbox{ and }
   \rho_{\min}:=\min_{k\in [m]}\min_{i\in [l(k)]} |\rho_{k,i}|
 \end{equation}
 \begin{definition}\label{cr54}
   We say that \texttt{$\mathcal{F}$ is small} if both of the following two requirements hold:
  \begin{enumerate}[label={\bf (a)}]
  \item $\sum\limits_{k=1}^{m}\rho_k<1$.
  \item  Our second requirement depends on the injectivity of $f_k$:
  \begin{enumerate}[label={\bf (i)}]
  \item If $f_k$ is injective then we require that $\rho_k<\frac{1}{2}$.
  \item If $f_k$ is not injective then we require that $\rho_k<\frac{1-\rho_{\max}}{2}$, which always holds if $\rho_{\max}<\frac{1}{3}$.
  \end{enumerate}
  \end{enumerate}
   \end{definition}
The first assumption is required to know a priori that $\dim_{\rm H} \Lambda < 1$ in a certain uniform manner, and the later assumptions are required for a kind of transversality argument.
   \begin{definition}\label{cr53}
     We say that a small CPLIFS $\mathcal{F}$ is \texttt{regular} if its attractor $\Lambda_{\mathcal{F}}$ does not contain any of the breaking points $\left\{b_{k,i}\right\}_{k\in[m],
     i\in[l(k)]}$.
   \end{definition}

\medskip
Remember, our aim is to prove that \emph{$\dim_{\rm P}$-typically}
\begin{equation}\label{cs69}
\mbox{ $\mathcal{F}$ is small}
  \Longrightarrow
  \mbox{$\mathcal{F}$ is regular,} \quad\&\quad
  \mbox{$\mathcal{F}$ is regular}  \Longrightarrow
  \dim_{\rm H} \Lambda = s_{\mathcal{F}}.
\end{equation}
Here \emph{$\dim_{\rm P}$-typically} means that if we fix all the contraction ratios satisfying the condition in the left side
but perturb all other parameters, then for all parameters other then an exceptional set of less than full packing dimension the statement on the right in \eqref{cs69} holds, for each implication respectively.

The translation family of CPLIFSs does not have enough parameters to claim a $\dim_{\rm P} $-typical result for the Hausdorff dimension of the attractor.
Thus, besides the vertical translations (the translations of the graphs of the functions in the IFS vertically), as in the translation families, in the case of CPLIFSs we also consider the breaking points as parameters (but we fix all of the contraction ratios). In this way besides the  $m$ vertical translation parameters $\pmb{\tau}=(\tau_1, \dots ,\tau_m)$ we have $L:=\sum\limits_{k=1}^{m}l(k)$ parameters which correspond to all the breaking points. Let us denote temporarily this $L+m$-dimensional parameter space by $U$. For fixed contraction ratios $\left\{\rho_{k,i}\right\}_{k\in[m],i\in [l(k)+1]}$ the elements of this parameter space $U$ determine a CPLIFS.he

\subsection{The steps of our argument}
 We fix the set of contractions \newline
 $\left\{\rho_{k,i}\right\}_{k\in[m],i\in [l(k)+1]}$ which are small in the sense introduced above.
  \begin{enumerate}[label={\bf (a)}]
\item First we observe that there exists a non-singular affine correspondence between the elements of $U$ and
       $\left\{t_{k,i}\right\}_{k\in[m],i\in[l(k)+1]}$,
the set of translations of the generated self-similar system.
\item We may apply the Multi Parameter Hochman Theorem  to conclude that $\dim_{\rm P} $-typically the ESC holds for the generated self-similar IFS.
\item We prove in our Main Proposition that $\dim_{\rm p} $-typically
there are no breaking points on the attractors.
\item It implies, that $\dim_{\rm P} $-typically we can construct a self-similar graph-directed IFS whose attractor coincides with the attractor of our CPLIFS.
\item When we are in both of the $\dim_{\rm P} $-typical situations described by the previous points,
 we introduce an ergodic (Markov) measure $\nu$ on the symbolic space determined by the previously mentioned graph-directed IFS such that $\dim_{\rm H} \nu$ is equal to the Hausdorff dimension of the attractor of our CPLIFS.
\item To show it, we use a recent theorem of Jordan-Rapaport 
\cite[Theorem 1.1]{jordan2020dimension} to compute $\dim_{\rm H} \nu$. Then we conclude that the Hausdorff dimension and the natural dimension of the attractor coincide.
  \end{enumerate}

\subsection{The organization of the paper}
The first two sections are dedicated to introduce and motivate the continuous piecewise linear iterated function systems, and the pressure function we use to calculate the natural dimension of the attractor. We introduce here all the important definitions and notations, and the typicality we use in our theorems as well. At the end of the chapter we state our main theorems.

In Section \ref{cv13} we present some recent results from the field of self-similar iterated function systems, that will be useful in our proofs. Following \cite{hochman2014self} we define the exponential Separation Condition (ESC) and state some important theorems of M. Hochman that utilizes this assumption. We also state here a recent result of Jordan and Rapaport \cite{jordan2020dimension} that will help us give a lower bound on the Hausdorff dimension of the attractor of a regular CPLIFS.
In the second half of this section we introduce the self-similar graph-directed iterated function systems and their dimension theory based on \cite{mauldin1988hausdorff} and \cite{falconer1997techniques}.

To prove our results, we need to associate regular CPLIFSs to graph-directed iterated function systems. Section \ref{cs64} contains the steps how we relate these two IFS families to each other. We also introduce here an ergodic invariant measure supported on the attractor of a regular CPLIFS, which lets us calculate the Hausdorff dimension of the attractor of the corresponding CPLIFS with the help of the Jordan-Rapaport theorem.

Section \ref{cr60} contains the proof of our main theorems assuming the main proposition, and some useful lemmas that are not just helpful in the proofs, but they also give us more details about CPLIFSs.
The proof of our main proposition is contained in section \ref{cr58}.

In the Appendix we present how our method of associating a regular CPLIFS to a graph-directed function system can be used to extend one of Hochman's theorems (Namely part (b) of Theorem \ref{cr59} in this paper) from self-similar sets to graph directed ones.

\section{Preliminaries and main results}\label{cv23}

\subsection{Parametrizing CPLIFSs} We fix a number $m\geq 2$, and use it as the number of functions in a CPLIFS throughout the paper. 
Let
$\mathcal{F}=\left\{f_k\right\}_{k=1}^{m }$ be a CPLIFS.
We write $l(k)$ for the number of breaking points  of $f_k$
for $k\in[m]$,
and we say that the \texttt{type of the CPLIFS} is the vector
  \begin{equation}\label{cv44}
  \pmb{\ell }=(l(1), \dots ,l(m)).
\end{equation}
For example the type of the CPLIFS on Figure \ref{cv47} is
$\pmb{\ell }=(2,1)$. If $\mathcal{F}$ is a CPLIFS of type $\pmb{\ell }$,  then we write
$$\mathcal{F}\in\mathrm{CPLIFS}_{\pmb{\ell }}.$$
 The breaking points of $f_k$
are denoted by $b_{k,1} < \cdots < b_{k,l(k)}$.
Let $L:=\sum\limits_{k=1}^{m}l(k)$ be the \texttt{total number of breaking points} of the functions of $\mathcal{F}$ with multiplicity if some of the breaking points of two different elements of $\mathcal{F} $ coincide.
We arrange all the breaking points in an
$L$ dimensional vector $\mathfrak{b}\in \mathbb{R}^L$
 in a way described below. First we partition $[L]=\left\{1, \dots ,L\right\}$ into blocks of length $l(k)$ for $k\in[m]$. The $k$-th block is
 \begin{equation}\label{cv52}
 L^k:=\left\{p\in\mathbb{N} :1+\sum\limits_{j=1}^{k-1}l(j) \leq p
 \leq \sum\limits_{j=1}^{k}l(j)
\right\}
 \end{equation}
 where $\sum\limits_{j=1}^{k-1}$ is meant to be $0$ when $k=1$.
 We use this convention without further mentioning it throughout the paper. The breaking points of $f_k$ occupy the components belonging to
the block $L(k)$ in increasing order. That is
\begin{equation}\label{cv58}
  \mathfrak{b}
  =
  (\underbrace{b_{1,1}, \dots ,b_{1,l(1)}}_{L^1},
  \underbrace{b_{2,1}, \dots ,b_{2,l(2)}}_{L^2},
  \dots ,
  \underbrace{b_{m,1}, \dots ,b_{m,l(m)}}_{L^m}).
\end{equation}
The set of breaking points vectors $\mathfrak{b}$ for a type $\pmb{\ell }$ CPLIFS is
\begin{equation}\label{cv59}
  \mathfrak{B}^{\pmb{\ell }}:=
  \left\{\mathbf{x}\in\mathbb{R}^{L}
  : x_i<x_j \mbox{ if } i<j \mbox{ and } \exists k\in [m] \mbox{ with }
  i,j\in L^k
  \right\}.
\end{equation}
The $l(k)$ breaking points of the piecewise linear continuous function $f_k$ determines the $l(k)+1$  \texttt{intervals of linearity} $J_{k,i}^{\mathfrak{b}}$, among which the first and the last are actually half lines:
\begin{equation}\label{cv57}
 J_{k,i}:=  J_{k,i}^{\mathfrak{b}}:=
  \left\{
    \begin{array}{ll}
      (-\infty ,b_{k,1}), & \hbox{if $i=1$;} \\
      (b_{k,i-1},b_{k,i}), & \hbox{if $2\leq i\leq l(k)$;} \\
      (b_{k,l(k)},\infty ), & \hbox{if $i=l(k)+1$.}
    \end{array}
  \right.
\end{equation}
The derivative of $f_k$ exists on $J_{k,i}$ and is equal to the constant
\begin{equation}\label{cs68}
  \rho_{k,i}:\equiv f'_k|_{J_{k,i}}.
\end{equation}
We arrange the contraction ratios $\rho_{k,i}\in (-1,1)\setminus \{ 0\}$ into a vector $\pmb{\rho}$ in an analogous way as we arranged the breaking points into a vector  in \eqref{cv58}, but taking into account that there is one more contraction ratio for each $f_k$ than breaking points:
\begin{equation}\label{cv56}
  \pmb{\rho}:=\pmb{\rho}_{\mathcal{F}}:=
  (
  \underbrace{\rho_{1,1}, \dots ,\rho_{1,l(1)+1}}_{\widetilde{L}^1},
 \dots
 ,\underbrace{\rho_{m,1}, \dots ,\rho_{m,l(m)+1}}_{\widetilde{L}^m}
  )\in \left( (-1,1)\setminus \{ 0\}\right)^{L+m},
\end{equation}
where
\begin{equation}\label{cv51}
  \widetilde{L}^k:=
  \left\{ p \in\mathbb{N}:
  1+\sum\limits_{j=1}^{k-1}\left(1+l(j)\right) \leq p  \leq
  \sum\limits_{j=1}^{k}\left(1+l(j)\right) \right\}.
\end{equation}
We call $\pmb{\rho}$ the \texttt{vector of contractions}.
The set of all possible values of $\pmb{\rho}$ for an $\mathcal{F}\in \mathrm{CPLIFS}_{\pmb{\ell }}$
is
\begin{equation}\label{cv50}
  \mathfrak{R}^{\pmb{\ell }}:
  =
  \left\{\pmb{\rho}
  \in\left((-1,1)\setminus \{ 0\}\right)^{L+m}:
  \forall k\in[m],\
\!  \forall i,i+1\in\widetilde{L}^k,\!
  \rho_{i}\ne \rho_{i+1}
  \right\},
\end{equation}
where $\pmb{\rho}=(\rho_1, \dots ,\rho_{L+m})$.
Recall the definitions of $\rho_{k}$, $\rho_{\max}$ and
$\rho_{\min}$ from \eqref{cr52}.
Moreover, let $\rho_{k_1 \dots k_n}:=\rho_{k_1}\cdots\rho_{k_n}$.
Clearly,
\begin{equation}\label{ct67}
  \vert f'_{k_1 \dots k_n}(x)\vert \leq \rho_{k_1 \dots k_n},
  \mbox{ for all } x.
\end{equation}

Finally, we write
\begin{equation}\label{cv55}
  \tau_k:=f_k(0), \mbox{ and }
  \pmb{\tau}:=(\tau_1, \dots ,\tau_m)\in\mathbb{R}^{m}.
\end{equation}
So, the parameters that uniquely determine an $\mathcal{F}\in\mathrm{CPLIFS}_{\pmb{\ell }}$
can be organized into a vector
\begin{equation}\label{cv53}
  \pmb{\lambda}=(\mathfrak{b},\pmb{\tau},\pmb{\rho})\in
  \pmb{\Gamma}^{\pmb{\ell }}:=\mathfrak{B}^{\pmb{\ell }}\times\mathbb{R}^m\times\mathfrak{R}^{\pmb{\ell }}
   \subset \mathbb{R}^L\times\mathbb{R}^m\times\mathbb{R}^{L+m}=
   \mathbb{R}^{2L+2m}.
\end{equation}
It is easy to see that $\pmb{\lambda}$ indeed determines a CPLIFS uniquely. For instance, assuming that all the breaking points in the system are positive, we have the following formula for the functions of $\mathcal{F}$:
\begin{equation}\label{cr07}
    \quad f_k(x)=\tau_k + 
    \sum_{j=1}^{l(k)} (b_{k,j}-b_{k,j-1})\rho_{k,j}\pmb{1}\{ b_{k,j}\leq x\} +
    \sum_{j=1}^{l(k)+1} \rho_{k,j} x \pmb{1}\{ x\in J_{k,j}\} ,
\end{equation}
for any $k\in[m]$ and $x\geq 0$, where we used the notation $\forall k\in[m]: b_{k,0}=0$ to make the formula more compact.

For a $\pmb{\lambda}\in \pmb{\Gamma}^{\pmb{\ell }}$ we write $\mathcal{F}^{\pmb{\lambda}}$ for the corresponding CPLIFS, $\Lambda^{\pmb{\lambda}}$ for its attractor, and $s_{\pmb{\lambda}}$ for its natural dimension. Similarly,
for an $\mathcal{F}\in\mathrm{CPLIFS}_{\pmb{\ell }}$ we write
$\pmb{\lambda}(\mathcal{F})$ for the corresponding element of $\pmb{\Gamma}^{\pmb{\ell }}$. 

We can handle only those situations when the contraction ratios are sufficiently small.
Along the lines of Definition \ref{cr54}, we define the set of small contraction vectors for a given type $\pmb{\ell }$ as
\begin{equation}\label{cv46}
 \mathfrak{R}^{\pmb{\ell }}_{\mathrm{small}}:=
 \left\{ \pmb{\rho}\in\mathfrak{R}^{\pmb{\ell }} :
   \mathcal{F}^{(\mathfrak{b},\pmb{\tau},\pmb{\rho})}
   \mbox{ is small for all } \mathfrak{b}\in\mathfrak{B}^{\pmb{\ell}}
   \mbox{ and } \pmb{\tau}\in\mathbb{R}^m \right\}.
\end{equation}

\bigskip

With the help of these notations we can taylor the terminology of $\dim_{\rm P}$-typicality for CPLIFSs.

\underline{\textbf{Terminology}:}\label{cs67}
Let $\mathfrak{P}$ be a property that makes sense for every CPLIFS.   
For a contraction vector
$\pmb{\rho}\in\mathfrak{R}_{\mathrm{small}}^{\pmb{\pmb{\ell }} }$
 we consider the (exceptional) set  
\begin{equation}\label{cs71}
  E_{\mathfrak{P},\pmb{\ell }}^{\pmb{\rho}}=:
  \left\{
  (\mathfrak{b},\pmb{\tau})\in  \mathfrak{B}^{\pmb{\ell }}\times
  \mathbb{R}^m
  :
  \mathcal{F}^{(\mathfrak{b},\pmb{\tau},\pmb{\rho})} \mbox{ does not have property } \mathfrak{P}
  \right\}.
\end{equation}
We say
 that \texttt{property $\mathfrak{P}$ holds $\dim_{\rm P}$-typically} if for all
type  $\pmb{\ell }$  and for all $\pmb{\rho}\in\mathfrak{R}_{\mathrm{small}}^{\pmb{\ell }}$ we have
 \begin{equation}\label{cs70}
   \dim_{\rm P} E_{\mathfrak{P},\pmb{\ell }}^{\pmb{\rho}}  <
   L+m,
 \end{equation}
where $\pmb{\ell }=(l(1), \dots ,l(m))$ and $L=\sum\limits_{k=1}^{m}l(k)$ as above.

\subsection{Our main results}

With the help of all these definitions and notations we can state now the main results of this paper.

\begin{theorem}[Main Theorem]\label{ct48}
  For a $\dim_{\rm P}$-typical small CPLIFS $\mathcal{F}$ we have
\begin{equation}\label{cv09}
  \dim_{\rm H} \Lambda^{\mathcal{F}}=
  \dim_{\rm B} \Lambda^{\mathcal{F}}=s_{\mathcal{F}}.
\end{equation}
\end{theorem}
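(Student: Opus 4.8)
The plan is to reduce everything to the two implications announced in \eqref{cs69}, and then invoke the structural results sketched in part (a) and (b) of the introduction. Fix a type $\pmb{\ell}$ and a small contraction vector $\pmb{\rho}\in\mathfrak{R}^{\pmb{\ell}}_{\mathrm{small}}$; recall that in this setup the remaining free parameters are $(\mathfrak{b},\pmb{\tau})\in\mathfrak{B}^{\pmb{\ell}}\times\mathbb{R}^m$, a space of dimension $L+m$. First I would observe, following step (a) of the argument, that there is a non-singular affine bijection between $(\mathfrak{b},\pmb{\tau})$ and the translation vector $\{t_{k,i}\}$ of the generated self-similar IFS $\mathcal{S}_{\mathcal{F}}$; since the contraction ratios are fixed and small, $\mathcal{S}_{\mathcal{F}}$ has $L+m$ maps and lives in an $(L+m)$-dimensional translation family, so the Multi Parameter Hochman Theorem (Theorem \ref{cv78}) applies and yields that the ESC for $\mathcal{S}_{\mathcal{F}}$ fails only on a set of packing dimension at most $L+m-1<L+m$. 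Call this first exceptional set $E_1$.

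Next I would invoke the Main Proposition: the set $E_2$ of parameters $(\mathfrak{b},\pmb{\tau})$ for which the attractor $\Lambda^{\pmb{\lambda}}$ contains one of the breaking points, i.e. for which $\mathcal{F}^{\pmb{\lambda}}$ fails to be regular, also has $\dim_{\rm P}E_2<L+m$. Then for every parameter outside $E_1\cup E_2$ — a set whose packing dimension is still strictly below $L+m$, using finite stability of packing dimension — the CPLIFS $\mathcal{F}=\mathcal{F}^{\pmb{\lambda}}$ is regular \emph{and} its generated self-similar system satisfies the ESC. At this point part (a) of the introduction takes over: because there are no breaking points on $\Lambda^{\mathcal{F}}$, after passing to the $N$-th iterate one can select a subsystem $\mathscr{S}_{\mathcal{F}}\subset\mathcal{S}_{\mathcal{F}}^N$ and organise its maps into a self-similar graph-directed IFS whose attractor is exactly $\Lambda^{\mathcal{F}}$; the ESC passes to $\mathcal{S}_{\mathcal{F}}^N$ and to this subsystem. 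Introducing the ergodic Markov measure $\nu$ on the associated symbolic space and applying the Jordan--Rapaport Theorem \cite{jordan2020dimension} gives $\dim_{\rm H}\nu=s_{\mathcal{F}}$, hence $\dim_{\rm H}\Lambda^{\mathcal{F}}\geq s_{\mathcal{F}}$. Combined with Barreira's upper bound $\overline{\dim}_{\rm B}\Lambda^{\mathcal{F}}\leq s_{\mathcal{F}}$ from Corollary \ref{cr12}, and the trivial chain $\dim_{\rm H}\leq\underline{\dim}_{\rm B}\leq\overline{\dim}_{\rm B}$, all three quantities in \eqref{cv09} coincide and equal $s_{\mathcal{F}}$, which is the desired conclusion outside an exceptional set of packing dimension $<L+m$; that is precisely the assertion that \eqref{cv09} holds $\dim_{\rm P}$-typically.

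The step I expect to be the real obstacle is the Main Proposition, namely proving that $\dim_{\rm P}$-typically no breaking point lies on the attractor. The natural approach is a transversality argument: writing the event "$b_{k,i}\in\Lambda^{\pmb{\lambda}}$" in terms of the coding maps $f_{\mathbf{j}}^{\pmb{\lambda}}$ and showing that, as $(\mathfrak{b},\pmb{\tau})$ varies, the relevant functions cross the diagonal with controlled (non-degenerate) derivative — here the smallness hypothesis $\sum_k\rho_k<1$ together with the case-dependent bounds $\rho_k<\tfrac12$ or $\rho_k<\tfrac{1-\rho_{\max}}{2}$ are exactly what is needed to make the transversality constant positive, and to keep a priori $\dim_{\rm H}\Lambda<1$ uniformly. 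One then covers the bad set by preimages of thin neighbourhoods of the diagonal across all cylinders of each depth $n$, estimates the packing dimension of the resulting limsup set via a Borel--Cantelli / covering count, and shows the exponential gain from contraction beats the combinatorial growth $m^n$ strongly enough to force the exceptional dimension below $L+m$. Making this count uniform in $\pmb{\rho}\in\mathfrak{R}^{\pmb{\ell}}_{\mathrm{small}}$ and handling the non-injective branches (where the cylinder structure is genuinely more delicate than in the self-similar case) is where the technical heart of the paper lies; the remaining steps are assembled from the cited theorems essentially as black boxes.
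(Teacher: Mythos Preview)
Your proposal is correct and follows essentially the same route as the paper: you split the exceptional set into the ESC-failure part (handled by the affine reparametrisation $\Phi_{\pmb{\rho}}$ combined with the Multi Parameter Hochman Theorem) and the non-regularity part (handled by the Main Proposition), and on the complement you invoke exactly the machinery the paper builds in Theorem~\ref{cs74} --- the associated subsystem $\mathscr{S}_{\mathcal{F}}\subset\mathcal{S}_{\mathcal{F}}^N$, the graph-directed representation of $\Lambda^{\mathcal{F}}$, the ergodic Markov measure, Jordan--Rapaport for the lower bound, and Barreira's Corollary~\ref{cr12} for the upper bound. Your sketch of how the Main Proposition should go (transversality-type control of $b_{k,i}\in\Lambda^{\pmb{\lambda}}$ using the smallness bounds, followed by a covering count) is also in line with the paper's actual proof in Section~\ref{cr58}.
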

To prove our main result we verify the following theorem and proposition:

\begin{theorem}\label{cs74}
Let $\mathcal{F}$ be a regular CPLIFS for which the generated self-similar IFS (defined in Section \ref{ct16}) satisfies the Exponential Separation Condition (ESC) (see Definition \ref{cv91}). Then
\begin{equation}\label{cs73}
  \dim_{\rm H} \Lambda^{\mathcal{F}}=
  \dim_{\rm B} \Lambda^{\mathcal{F}}=s_{\mathcal{F}}.
\end{equation}
\end{theorem}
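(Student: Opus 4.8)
The plan is to establish the two inequalities $\dim_{\rm H}\Lambda^{\mathcal{F}}\ge s_{\mathcal{F}}$ and $\overline{\dim}_{\rm B}\Lambda^{\mathcal{F}}\le s_{\mathcal{F}}$ separately, since the upper bound $\overline{\dim}_{\rm B}\Lambda^{\mathcal{F}}\le s_{\mathcal{F}}$ is exactly Corollary \ref{cr12} (Barreira), and the trivial chain $\dim_{\rm H}\le\underline{\dim}_{\rm B}\le\overline{\dim}_{\rm B}$ then forces all the quantities in \eqref{cs73} to coincide once we prove the matching lower bound $\dim_{\rm H}\Lambda^{\mathcal{F}}\ge s_{\mathcal{F}}$. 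So the entire content of the theorem is the lower bound, and that is what the bulk of the argument must address, following the strategy outlined in part (a) of the introduction.

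To get the lower bound, I would first invoke the regularity hypothesis: since $\mathcal{F}$ is regular, its attractor $\Lambda^{\mathcal{F}}$ avoids all breaking points $\{b_{k,i}\}$, and since $\Lambda^{\mathcal{F}}$ is compact there is an $N$ such that no cylinder interval $I^{\mathcal{F}}_{i_1\dots i_N}$ of depth $N$ contains any breaking point. Consequently each depth-$N$ composition $f_{i_1\dots i_N}$ restricted to a neighbourhood of the relevant piece of the attractor agrees with an affine contraction, i.e.\ with one of the maps of $\mathcal{S}_{\mathcal{F}}^N$, the $N$-th iterate of the generated self-similar IFS. I would then extract the subsystem $\mathscr{S}_{\mathcal{F}}\subset\mathcal{S}_{\mathcal{F}}^N$ consisting of exactly those branches that are actually ``seen'' by the attractor (those $(i_1,\dots,i_N)$ for which $I^{\mathcal{F}}_{i_1\dots i_N}$ meets $\Lambda^{\mathcal{F}}$ in a way that matters), and organise these branches into a graph-directed self-similar system — the vertices encode which interval of linearity the image lands in, so that admissible concatenations of branches are precisely those that stay consistent with the piecewise structure — whose attractor is exactly $\Lambda^{\mathcal{F}}$. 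Here I would use the observation promised in the introduction that the ESC for $\mathcal{S}_{\mathcal{F}}$ is inherited by $\mathcal{S}_{\mathcal{F}}^N$ and all of its subsystems, so $\mathscr{S}_{\mathcal{F}}$ (hence the graph-directed system built from it) satisfies exponential separation. Finally I would apply the Jordan--Rapaport theorem \cite{jordan2020dimension} to this graph-directed system: under exponential separation it yields $\dim_{\rm H}\Lambda^{\mathcal{F}}$ equal to the minimum of $1$ and the affinity/similarity dimension of the graph-directed system, computed via the appropriate pressure/spectral-radius equation. The last bookkeeping step is to check that this pressure-theoretic exponent agrees with $s_{\mathcal{F}}=(\Phi^{\mathcal{F}})^{-1}(0)$ — i.e.\ that the zero of the graph-directed pressure and the zero of $\Phi^{\mathcal{F}}$ from \eqref{cr64} coincide — which follows because the length of a cylinder $|I^{\mathcal{F}}_{i_1\dots i_n}|$ is, up to bounded multiplicative constants, the product of the corresponding $|\rho_{k,i}|$'s along the matched self-similar branches, so the two sums in \eqref{cr19} and the graph-directed analogue have the same exponential growth rate.

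The main obstacle, I expect, is the passage from the CPLIFS to the graph-directed self-similar system and the verification that its attractor is genuinely $\Lambda^{\mathcal{F}}$ with no loss: one must be careful that a composition $f_{i_1\dots i_N}\circ f_{j_1\dots j_N}\circ\cdots$ of depth-$N$ blocks, each individually affine on the relevant cylinder, really does coincide with the corresponding composition of the $S_{\cdot}$'s on all of $\Lambda^{\mathcal{F}}$ — this requires knowing that the image of each later block lands inside a single interval of linearity of the functions used by the earlier block, which is where the ``small'' hypothesis (contraction ratios $<\tfrac12$, or $<\tfrac{1-\rho_{\max}}{2}$ in the non-injective case) and the depth-$N$ regularity feed in to control how cylinders sit relative to breaking points. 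The admissibility graph must be set up so that exactly the legal concatenations survive, and one must confirm the resulting graph-directed IFS is strongly connected (or handle components separately) so that the Jordan--Rapaport dimension formula applies cleanly. Once the graph-directed model is correctly in place, the ESC-inheritance, the dimension computation, and the identification of the exponent with $s_{\mathcal{F}}$ are essentially routine.
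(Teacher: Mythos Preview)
Your proposal is correct and follows the paper's strategy closely: upper bound from Barreira, lower bound via a graph-directed self-similar model for $\Lambda^{\mathcal{F}}$, ESC inherited by the subsystem $\mathscr{S}_{\mathcal{F}}\subset\mathcal{S}_{\mathcal{F}}^N$, Jordan--Rapaport for the lower bound, and a bookkeeping lemma (Lemma~\ref{cs76}) identifying the graph-directed exponent $\alpha$ with $s_{\mathcal{F}}$.

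A few points where your description of the execution diverges from what the paper actually does. First, the graph the paper builds has vertex set $\mathcal{V}=[m]^N$ (all length-$N$ words) and is the \emph{complete} directed graph on $\mathcal{V}$; the ``which interval of linearity'' data lives on the edges via the map $\psi:\mathcal{E}\to\mathscr{A}$, not on the vertices. In particular your worry about strong connectivity is a non-issue. Second, Jordan--Rapaport (Theorem~\ref{cv68}) is stated for a self-similar IFS together with an arbitrary ergodic invariant measure, not for graph-directed systems per se; the paper encodes the graph-directed structure as a Markov measure $\mathfrak{m}$ on the full shift $\mathscr{A}^{\mathbb{N}}$ for $\mathscr{S}_{\mathcal{F}}$, supported on the subshift $\mathscr{A}^{\mathbb{N}}_{\rm Good}$, verifies $h(\mathfrak{m})/\chi(\mathfrak{m})=\alpha$, and then applies Jordan--Rapaport to the pair $(\mathscr{S}_{\mathcal{F}},\mathfrak{m})$. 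Third, the ``small'' hypothesis plays no role in this theorem --- only regularity is assumed; regularity alone guarantees that each $f_{\sigma^p\mathbf{v}}I_{\mathbf{u}}$ sits in a single interval of linearity (formula~\eqref{ct21}), which is what makes the self-similar replacement exact. Smallness enters only in the Main Proposition, where it is used to show that regularity is $\dim_{\rm P}$-typical.
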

To prove this, we use the multi-parameter Hochman Theorem
\cite[Theorem 1.10]{Hochman_2015}
 and a very recent result of Jordan and Rapaport \cite[Theorem 1.1]{jordan2020dimension}.

\begin{proposition}[Main Proposition]\label{ct46}
A $\dim_{\rm P}$-typical small CPLIFS is regular.
\end{proposition}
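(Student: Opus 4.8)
The plan is to show that for each type $\pmb{\ell}$ and each fixed small contraction vector $\pmb{\rho}\in\mathfrak{R}^{\pmb{\ell}}_{\mathrm{small}}$, the set of pairs $(\mathfrak{b},\pmb{\tau})$ for which some breaking point lies on the attractor $\Lambda^{(\mathfrak{b},\pmb{\tau},\pmb{\rho})}$ has packing dimension strictly less than $L+m$. The natural strategy is a transversality argument in the spirit of Falconer--Simon, but carried out one breaking point at a time. Fix a breaking point $b_{k_0,i_0}$. A point $x\in\Lambda^{\pmb{\lambda}}$ has a symbolic coding $\mathbf{i}=(i_1,i_2,\dots)\in[m]^{\mathbb{N}}$, and the condition ``$b_{k_0,i_0}\in\Lambda^{\pmb{\lambda}}$'' means that for some coding $\mathbf{i}$ the limit point $\pi^{\pmb{\lambda}}(\mathbf{i})$ (the natural projection) equals $b_{k_0,i_0}$. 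So the exceptional set is
\[
E = \bigcup_{k_0\in[m]}\bigcup_{i_0\in[l(k_0)]}
\left\{(\mathfrak{b},\pmb{\tau}) : \exists\,\mathbf{i}\in[m]^{\mathbb{N}},\ \pi^{(\mathfrak{b},\pmb{\tau},\pmb{\rho})}(\mathbf{i})=b_{k_0,i_0}\right\}.
\]
Since a finite union does not increase packing dimension, it suffices to bound each of these sets.

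First I would reduce to the regime where the maps are genuinely affine along a given deep cylinder: by smallness of $\pmb{\rho}$ one has $\dim_{\rm H}\Lambda<1$ uniformly, and more importantly the cylinder intervals shrink geometrically at rate $\rho_{\max}<1$, so for $n$ large the image $f^{\pmb{\lambda}}_{i_1\dots i_n}(I^{\pmb{\lambda}})$ is short. The key geometric input is a transversality estimate: for two distinct codings $\mathbf{i}\neq\mathbf{j}$, the difference $\pi^{\pmb{\lambda}}(\mathbf{i})-\pi^{\pmb{\lambda}}(\mathbf{j})$, regarded as a function of the parameters $(\mathfrak{b},\pmb{\tau})$, does not vanish to high order; equivalently $\pi^{\pmb{\lambda}}(\mathbf{i})$ moves at definite speed in a suitable direction of parameter space as we perturb. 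The smallness assumptions in Definition~\ref{cr54} — the contraction ratios being below $\tfrac12$ (injective case) or below $\tfrac{1-\rho_{\max}}{2}$ (non-injective case) — are exactly what is needed to guarantee that a perturbation of the $\tau_k$'s (or of the breaking points, which through the affine correspondence of Section~\ref{cr55} act like translations of the generated self-similar system) translates $\pi^{\pmb{\lambda}}(\mathbf{i})$ faster than it can translate $b_{k_0,i_0}$ itself, so the level set ``$\pi^{\pmb{\lambda}}(\mathbf{i})=b_{k_0,i_0}$'' is a graph-like (hence positive-codimension) subset. Summing a Fubini/covering estimate over all $\mathbf{i}$: for each $n$, the set of parameters for which some length-$n$ cylinder endpoint comes within distance $\delta$ of $b_{k_0,i_0}$ is covered by at most $m^n$ slabs of width $\asymp\delta/\rho_{\min}^{\,n}$; choosing $\delta$ as a function of $n$ (e.g.\ $\delta=\rho_{\min}^{\,n}\cdot\theta^n$ with $\theta<1$) and taking $n\to\infty$ gives an explicit $s<L+m$ with $\mathcal{H}^s$-measure (hence packing dimension) control on the exceptional set, provided $m\,\theta$-type quantities are balanced against $\rho_{\min}^{-1}$. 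One has to be a little careful that it is the \emph{packing} dimension, not just Hausdorff dimension, that comes out below $L+m$; this is handled by noting the bound holds at every scale uniformly (a box-counting / upper-capacity style estimate), which controls upper box dimension of the relevant pieces and therefore packing dimension.

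The main obstacle, I expect, is the transversality estimate in the non-injective case. When $f_k$ is not injective, the generated self-similar system $\mathcal{S}_{\mathcal{F}}$ has maps with slopes of both signs folded together, and a breaking point $b_{k,i}$ is simultaneously an endpoint of two adjacent linearity intervals $J_{k,i}$ and $J_{k,i+1}$ carrying different slopes $\rho_{k,i}\neq\rho_{k,i+1}$; moving $b_{k,i}$ changes the dynamics in a genuinely two-sided way, so one cannot simply treat the breaking points as inert translation parameters. The fix is to exploit that we have $L+m$ free parameters and only need a single good direction at each parameter value: at a point where $b_{k_0,i_0}=\pi^{\pmb{\lambda}}(\mathbf{i})$, perturb the \emph{translation} $\tau_{k_0}$ (which moves $f_{k_0}$, hence moves $b_{k_0,i_0}$) together with a compensating perturbation, and show the net effect on the defining equation is non-degenerate using $\rho_{k}<\tfrac{1-\rho_{\max}}{2}$ to dominate the geometric series of ``feedback'' terms coming from $b_{k_0,i_0}$ appearing at deeper levels of the coding $\mathbf{i}$. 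I would organize this as a lemma: a quantitative lower bound of the form $\big|\partial_{\pmb{\tau}}\big(\pi^{\pmb{\lambda}}(\mathbf{i})-b_{k_0,i_0}\big)\big|\geq c>0$ uniformly in $\mathbf{i}$ and $\pmb{\lambda}$ in the small regime, and then the dimension bound on the exceptional set follows from the standard potential-theoretic transversality machinery applied coordinatewise.
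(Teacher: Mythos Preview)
Your overall strategy---fix a breaking point, cover the attractor by deep cylinders, and use a transversality estimate to show the parameter set where that breaking point lies in a given cylinder is a thin slab---matches the paper's proof. You are also right that the packing-dimension bound comes from a uniform-in-scale box-counting estimate, and that the obstruction in the non-injective case is a geometric series of ``feedback'' terms.

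However, your choice of transversality direction is wrong, and the reasoning around it is confused. You propose to perturb $\tau_{k_0}$ and write that this ``moves $f_{k_0}$, hence moves $b_{k_0,i_0}$''. It does not: $\tau_{k_0}=f_{k_0}(0)$ is a vertical translation, while $b_{k_0,i_0}$ is an independent parameter (the $x$-coordinate of the breaking point). Moreover, if the coding $\mathbf{i}$ never uses the symbol $k_0$, perturbing $\tau_{k_0}$ does not move $\pi^{\pmb\lambda}(\mathbf i)$ at all, so your proposed lower bound $|\partial_{\pmb\tau}(\pi^{\pmb\lambda}(\mathbf i)-b_{k_0,i_0})|\ge c$ fails. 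The paper's direction is $b_{k_0,i_0}$ \emph{itself}: writing $G(b_{1,1},\pmb\eta):=f_{k_1\dots k_n}^{(\mathfrak b,\pmb\tau,\pmb\rho)}(1)$ (a representative point of the cylinder), one has from the explicit formula for $f_{k_1\dots k_n}$ that $|\partial G/\partial b_{1,1}|<\gamma<1$. This is exactly where the smallness hypotheses enter: the contribution of $b_{p,q}$ to $t_{k_j,i_j}$ carries the factor $\rho_{p,q}-\rho_{p,q+1}$, which is bounded by $\rho_p$ in the injective case and by $2\rho_p$ in the non-injective case, and (b)(i)--(ii) of Definition~\ref{cr54} are precisely what make the resulting geometric series sum to something $<1$. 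Hence the breaking point $b_{1,1}$ moves at speed $1$ while the cylinder it must hit moves at speed $<\gamma$, so for each $\pmb\eta$ the set $\{b_{1,1}:b_{1,1}\in I^{(b_{1,1},\pmb\eta)}_{k_1\dots k_n}\}$ is an interval of length $O(r)$.

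Your covering count also needs correction. The paper does not use all $m^n$ words of length $n$, but a Moran cover $\mathcal M_r=\{(k_1,\dots,k_n):\rho_{k_1\dots k_n}\le r<\rho_{k_1\dots k_{n-1}}\}$, whose cardinality is $\asymp r^{-s_*}$ with $s_*$ defined by $\sum_k\rho_k^{s_*}=1$ (this is where condition (a) of smallness, $\sum_k\rho_k<1$, ensures $s_*<1$). Each word in $\mathcal M_r$ contributes a slab of width $O(r)$ in the $b_{1,1}$-direction over $D_2\subset\mathbb R^{L+m-1}$, hence $O(r^{-(L+m-1)})$ $r$-mesh cubes; multiplying by $\#\mathcal M_r$ gives upper box dimension $\le L+m-1+s_*<L+m$ on every compact piece, and then the packing dimension bound follows. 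Your ``$m^n$ slabs of width $\asymp\delta/\rho_{\min}^n$'' does not balance correctly.
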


Together with \cite[Theorem 3.2]{falconer1997techniques}, this proposition implies the following corollary.
\begin{corollary}\label{cr32}
For a $\dim_{\rm P}$-typical small CPLIFS $\mathcal{F}$ let $s:=\dim_{\rm H}\Lambda^{\mathcal{F}}$ be the Hausdorff dimension of its attractor. Further, assume that all functions in $\mathcal{F}$ are injective.
Then we have
\begin{equation}\label{cr31}
s=\dim_{\rm H} \Lambda^{\mathcal{F}} = \dim_{\rm B} \Lambda^{\mathcal{F}}.
\end{equation}
Further, $\mathcal{H}^s(\Lambda^{\mathcal{F}})<\infty$, where $\mathcal{H}^s$ denotes the $s$-dimensional Hausdorff measure.
\end{corollary}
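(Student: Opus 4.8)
The plan is to deduce Corollary \ref{cr32} directly from the Main Proposition (Proposition \ref{ct46}), Theorem \ref{cs74}, and the cited result \cite[Theorem 3.2]{falconer1997techniques}. First I would fix a $\dim_{\rm P}$-typical small CPLIFS $\mathcal{F}$ consisting of injective functions; by the Main Proposition we may assume $\mathcal{F}$ is regular, since the non-regular parameters form an exceptional set of packing dimension strictly less than $L+m$, and a finite union of such sets (we are also going to exclude the parameters where the ESC fails) is still exceptional in the same sense. By the multi-parameter Hochman Theorem \cite[Theorem 1.10]{Hochman_2015}, applied through the affine correspondence between the parameter space $U$ and the translation vector $\{t_{k,i}\}$ of the generated self-similar IFS $\mathcal{S}_{\mathcal{F}}$ (step (a)–(b) of the outline), the ESC holds for $\mathcal{S}_{\mathcal{F}}$ outside another set of packing dimension $< L+m$. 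Hence for a $\dim_{\rm P}$-typical small CPLIFS both hypotheses of Theorem \ref{cs74} are met, so \eqref{cs73} gives $s := \dim_{\rm H}\Lambda^{\mathcal{F}} = \dim_{\rm B}\Lambda^{\mathcal{F}} = s_{\mathcal{F}}$, which is the first assertion \eqref{cr31}.

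For the finiteness of $\mathcal{H}^s(\Lambda^{\mathcal{F}})$ I would invoke \cite[Theorem 3.2]{falconer1997techniques}, which states that the attractor of an IFS of \emph{injective} contracting similarities (or, more precisely in Falconer's formulation, of bi-Lipschitz or similarity maps satisfying suitable conditions) has finite Hausdorff measure in its dimension. Since $\mathcal{F}$ is regular, by the construction recalled in part (a) of the introduction there is an $N$ and a subsystem $\mathscr{S}_{\mathcal{F}} \subset \mathcal{S}_{\mathcal{F}}^N$ whose functions, organised into a graph-directed self-similar IFS, have attractor equal to $\Lambda^{\mathcal{F}}$; moreover all the maps $S_{k,i}$ are genuine similarities, and injectivity of the $f_k$ together with regularity ensures the relevant cylinders behave as in the self-similar/graph-directed setting. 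Applying \cite[Theorem 3.2]{falconer1997techniques} to this representation (or to the cylinder cover $\{I^{\mathcal{F}}_{\ii}\}$ directly, using that $|I^{\mathcal{F}}_{i_1\dots i_n}| = |\rho_{i_1\dots i_n}|\,|I^{\mathcal{F}}|$ up to bounded multiplicative constants once no breaking point sits on a cylinder) yields $\mathcal{H}^s(\Lambda^{\mathcal{F}}) < \infty$.

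The main obstacle I anticipate is the bookkeeping needed to make the last step rigorous: Falconer's Theorem 3.2 is phrased for ordinary self-similar (or cookie-cutter) sets, so one must either (i) check that the graph-directed self-similar system produced in Section \ref{cs64} falls within the scope of the graph-directed analogue of that theorem — using finitely many states, similarities, and the fact that the cylinder lengths at level $n$ comprise a controlled cover of $\Lambda^{\mathcal{F}}$ — or (ii) argue directly that the natural cover $\{I^{\mathcal{F}}_{\ii}\}_{|\ii|=n}$ realises the $s$-dimensional Hausdorff measure up to a constant, which requires the injectivity hypothesis to bound the overlap multiplicity of the cylinders at a fixed generation. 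The inequality $\mathcal{H}^s(\Lambda^{\mathcal{F}}) \le C \liminf_n S_n^s < \infty$ is immediate from \eqref{cr19}–\eqref{cr64} once $s = s_{\mathcal{F}}$, so the essential content is really just citing the right separation/injectivity packaging; I would keep this paragraph short and defer the details to the cited reference.
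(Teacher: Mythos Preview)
Your overall plan works, but it is more circuitous than the paper's and reflects some uncertainty about what \cite[Theorem 3.2]{falconer1997techniques} actually says. That theorem is not specific to self-similar or graph-directed systems; it is the general ``implicit method'' result: if for every ball $B(x,r)$ centred in $\Lambda$ there is a map $g:\Lambda\to\Lambda\cap B(x,r)$ with $|g(y)-g(z)|\geq ar\,|y-z|$ for a fixed $a>0$, then $\dim_{\rm H}\Lambda=\dim_{\rm B}\Lambda$ and $\mathcal{H}^{s}(\Lambda)<\infty$ for $s=\dim_{\rm H}\Lambda$. The paper applies this directly to the CPLIFS $\mathcal{F}$ itself, taking $g=f_{i_1\dots i_n}$ for a cylinder word chosen so that $\Lambda_{i_1\dots i_n}\subset B(x,r)$ but $\Lambda_{i_1\dots i_{n-1}}\not\subset B(x,r)$. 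The lower Lipschitz bound then follows from the Bounded Distortion Property for regular systems (Fact \ref{cr93}) together with injectivity of the $f_k$: injectivity forces all slopes of $f_{\mathbf{i}}$ to have the same sign, so $|f_{\mathbf{i}}(y)-f_{\mathbf{i}}(z)|\geq(\min|f'_{\mathbf{i}}|)\,|y-z|$, and BDP turns this minimum into a constant multiple of $r$.

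Thus the paper obtains both conclusions of the corollary in one stroke, using only the Main Proposition to secure regularity; neither Theorem \ref{cs74} nor the ESC is invoked. Your detour through the ESC and the associated graph-directed representation would also succeed (and your closing remark that $\liminf_n S_n^{s_{\mathcal F}}<\infty$, via Perron--Frobenius applied to $C^{(\alpha)}$, is a legitimate alternative route to $\mathcal{H}^{s}<\infty$), but it costs an extra exceptional set and the machinery of Section \ref{cs64}, none of which is needed here.
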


\section{Self-similar and self-similar graph-directed IFSs on $\mathbb{R}$} \label{cv13}
Here we summarize the relevant results from the theory of self-similar IFSs.
Let $\mathcal{F}=\left\{f_k\right\}_{k=1}^{M}$ be an IFS
 on $\mathbb{R}$ and $\Lambda$ be the attractor of $\mathcal{F}$.
 We denote the symbolic space by $\Sigma=\{1,\ldots,M\}^\N$, and its elements by $\ii=(i_1,i_2,\ldots)\in\Sigma$. We write $\Sigma^\ast$ for the set of all finite length words.
 Elements of $\Sigma^\ast$ are denoted by $\iiv=i_1\ldots i_n$, and sometimes we obtain them as truncations of infinite length words $\ii|n=i_1\ldots i_n$.
As usual, the natural projection $\Pi:\Sigma\to\Lambda$ is defined by
\begin{equation}\label{eq02}
\Pi(\ii) =\lim\limits_{n\to\infty}
f_{\mathbf{i}|n}(0).
\end{equation}
We write $\sigma$ for the left-shift on $\Sigma$. For the definition of invariance, ergodicity, and entropy (denoted by $h(\mu)$) of the measure $\mu$ we refer the reader to Walters' book \cite{walters2000introduction}.
If the measure $\mu$ is ergodic and the derivatives of all of the mappings $f_i$ are continuous, we define the Lyapunov exponent of $\mu$
by
\begin{equation}\label{cv89}
  \chi(\mu):=\int \log |f'_{i_i}(\Pi(\sigma\mathbf{i}))|d\mu(\mathbf{i}).
\end{equation}
Let $\nu$ be a Borel probability measure on $\mathbb{R}$. The Hausdorff dimension of $\nu$ is defined as
\begin{equation}\label{cv82}
  \dim_{\rm H} \nu:=\inf \left\{\dim_{\rm H} E: \nu(E)>0 \right\}.
\end{equation}

\subsection{Self-similar IFSs on the line}

In the special case when all $f_i$ are similarities (the slopes are constants) $\mathcal{F}$ is called self-similar IFS.
In this case for all $i\in[M]:=\left\{1, \dots ,M\right\}$ the mappings $f_i$, can be presented in the form
\begin{equation}\label{cv88}
  f_i(x)=r_ix+t_i, \mbox{ where  } r_i\in(-1,1)\setminus\left\{0\right\}
\mbox{ and }
t_i\in\mathbb{R}.
\end{equation}
The simplest guess for the dimension of the attractor $\Lambda$ can be expressed in terms of the \texttt{similarity dimension} $\dim_{\rm S} \Lambda$, which is defined as the solution $s$ of the equation $\sum\limits_{K=1}^{M}|r_k|^s=1$. That is
\begin{equation}\label{cv85}
  \dim_{\rm H} \Lambda \leq \min\left\{1,\dim_{\rm S} \Lambda\right\},
\end{equation}
It follows from a theorem of  Hochman's  (see in the next subsection) that
 in some sense typically we have equality above.
For an $\iiv:=(i_1, \dots ,i_n)$ we define
\begin{equation}\label{cv87}
r_{\iiv|_{0}}:=1,\quad
  r_{\iiv}=r_{i_1\ldots i_{n}}= r_{i_1}\cdot\ldots\cdot r_{i_{n}}
\mbox{ and }
t_{\iiv}:=\sum\limits_{k=1}^{n}t_{i_k}r_{\iiv|_{k}}.
\end{equation}
Clearly, we have $f_{\iiv}(x)=r_{\iiv}x+t_{\iiv}$ and
\begin{equation}\label{eq:Pi1}
  \Pi(\ii) =
  \sum\limits_{n=1}^{\infty }
  r_{i_1 \dots i_{n-1}}t_{i_n}
=
\lim\limits_{n\to\infty} t_{\mathbf{i}|_n}.
\end{equation}
If $\mu$ is a measure on $\Sigma$ then we write $\Pi_*\mu$ for the push forward measure of $\mu$.
That is $\Pi_*\mu(E)=\mu(\Pi^{-1}E)$.
Let $\mu$ be a $\sigma$-invariant ergodic measure on $\Sigma$.
Then the Lyapunov exponent is
\begin{equation}\label{cv86}
\chi(\mu)=\sum\limits_{k=1}^{m}\mu[k]\log r_k.
\end{equation}
 For a probability vector
$\mathbf{p}:=(p_1, \dots ,p_M)$ we define the measures:
\begin{equation}\label{cv94}
\mu_{\mathbf{p}}([i_1, \dots ,i_n]):=p_{i_1} \cdots p_{i_n}, \mbox{ and }
\nu_{\mathbf{p}}:=\Pi_*\mu_{\mathbf{p}}.
\end{equation}
We say that $\nu_{\mathbf{p}}$ is a \texttt{self-similar measure}.
The simplest guess for the Hausdorff dimension of a self-similar measure
$\nu_{\mathbf{p}}$
is
\begin{equation}\label{cv81}
  \dim_{\rm S}\nu_{\mathbf{p}}:=\frac{h(\nu_{\mathbf{p}})}
  {\chi(\nu_{\mathbf{p}})}
  =
  \frac{\sum\limits_{k=1}^{M}p_k\log p_k}{\sum\limits_{k=1}^{M}p_k\log r_k}.
\end{equation}
$\dim_{\rm H} \nu_{\mathbf{p}} \leq  \dim_{\rm S}\nu_{\mathbf{p}}$ and a theorem of Hochman states that typically we have equality. We say that $\dim_{\rm S} \nu_{\mathbf{p}}$ is the\texttt{ similarity dimension of the measure} $\nu_{\mathbf{p}}$.

Hochman \cite{hochman2014self} introduced the notion of exponential separation for self-similar IFSs. To state it, first we need to define the distance of two similarity mappings
$g_1(x)=\varrho_1x+\tau_1$ and $g_2(x)=\varrho_2x+\tau_2$, $\varrho_1,\varrho_2\in (-1,1)\setminus \left\{0\right\}$,
 on
$\mathbb{R}$. Namely,
\begin{equation}\label{cv92}
  \mathrm{dist}\left(g_1,g_2\right):=
  \left\{
    \begin{array}{ll}
      |\tau_1-\tau_2|, & \hbox{if $\varrho_1=\varrho_2$;} \\
      \infty , & \hbox{otherwise.}
    \end{array}
  \right.
\end{equation}
\begin{definition}\label{cv91}
Given a self-similar IFS $\mathcal{F}=\left\{f_k(x)\right\}_{k=1}^{m}$ on $\mathbb{R}$.
  We say that $\mathcal{F}$ satisfies the \texttt{Exponential Separation Condition (ESC)} if
  there exists a $c>0 $ and a strictly increasing sequence of natural numbers $\left\{n_\ell \right\}_{\ell =1}^{\infty }$ such that
\begin{equation}\label{cv90}
\mathrm{dist}\left(f_{\iiv},f_{\jjv}\right)  \geq c^{n_{\ell }} \mbox{ for all }
\ell  \mbox{ and for all } \iiv,\jjv\in \left\{1, \dots ,M\right\}^{n_\ell },\ \iiv\ne\jjv.
\end{equation}
\end{definition}
We note that the exponential separation condition always holds when an IFS is parametrized by algebraic parameters \cite{hochman2014self}.

\subsubsection{Some important results about self-similar IFSs}\label{cv24}
Here we present three very important theorems about the dimension theory of self-similar IFSs.
In this subsection $\mathcal{F}=\left\{f_k(x)=r_ix+t_i\right\}_{i=1}^{M}$, and $r_k\in(-1,1)\setminus\left\{0\right\}$ as above, and we use the notation of the previous subsections. First we recall two theorems of Hochman.

\begin{theorem}[Hochman \cite{hochman2014self}]\label{cr59}
Let $\mathcal{F}$ be a self-similar IFS on $\mathbb{R}$ which satisfies
  the Exponential Separation Condition (ESC). Then
  \begin{enumerate}[label={\bf (a)}]
  \item
  For every
  self-similar measure $\nu$ we have
  \begin{equation}\label{cv83}
    \dim_{\rm H}\nu= \min\{1,\dim_{\rm S}\Lambda \}.
  \end{equation}
\item Consequently,
  \begin{equation}\label{cv84}
    \dim_{\rm H} \Lambda=\min\left\{1,\dim_{\rm S} \Lambda\right\}
  \end{equation}
  \end{enumerate}
\end{theorem}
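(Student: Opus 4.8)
This is Hochman's theorem, which we use as a black box; for orientation we sketch the shape of its proof from \cite{hochman2014self}. Part (b) is deduced from part (a) by choosing a near-optimal Bernoulli measure: if $s=\dim_{\rm S}\Lambda$ solves $\sum_k|r_k|^s=1$ and $p_k:=|r_k|^s$, then \eqref{cv81} gives $\dim_{\rm S}\nu_{\mathbf p}=s$, so (a) yields $\dim_{\rm H}\nu_{\mathbf p}=\min\{1,s\}$; since $\supp\nu_{\mathbf p}\subset\Lambda$ this gives $\dim_{\rm H}\Lambda\ge\min\{1,s\}$, the reverse inequality being \eqref{cv85}. So the real task is part (a), and since self-similar measures are exact dimensional (Feng--Hu), it suffices to prove
\begin{equation*}
\lim_{n\to\infty}\frac{1}{n\log 2}\,H\!\left(\nu_{\mathbf p},\mathcal D_n\right)=\min\{1,\dim_{\rm S}\nu_{\mathbf p}\},
\end{equation*}
where $\mathcal D_n$ denotes the partition of $\mathbb R$ into dyadic intervals of length $2^{-n}$ and $H$ is Shannon entropy; the bound $\le$ is elementary and the content is $\ge$.

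Next I would exploit the self-affine identity $\Pi(\mathbf i)=t_{\mathbf i|n}+r_{\mathbf i|n}\,\Pi(\sigma^n\mathbf i)$, which by \eqref{eq:Pi1} exhibits $\nu_{\mathbf p}$ as an average of rescaled copies of itself and, after smoothing, as a convolution of the distribution $\theta_n$ of the partial translations $t_{\mathbf i|n}$ with a rescaled copy of $\nu_{\mathbf p}$. Comparing $H(\nu_{\mathbf p},\mathcal D_{n})$ with the averaged entropies of the pieces, over a geometric sequence of scales, leads to a dichotomy: either the entropy dimension of $\nu_{\mathbf p}$ already equals $\min\{1,\dim_{\rm S}\nu_{\mathbf p}\}$, or there is super-exponential concentration of the cylinder maps, meaning $-\tfrac1n\log\Delta_n\to\infty$ with $\Delta_n:=\min\{\mathrm{dist}(f_{\iiv},f_{\jjv}):\iiv,\jjv\in\{1,\dots,M\}^n,\ \iiv\ne\jjv\}$. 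The Exponential Separation Condition of Definition~\ref{cv91} asserts precisely $\Delta_{n_\ell}\ge c^{n_\ell}$ along a subsequence, so $-\tfrac1{n_\ell}\log\Delta_{n_\ell}\le\log(1/c)$ stays bounded; this kills the second alternative and proves (a).

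The technical core, and the step I expect to be the main obstacle, is the inverse theorem for entropy that powers the dichotomy: if $\theta,\omega$ are compactly supported probability measures on $\mathbb R$ with $H(\theta*\omega,\mathcal D_n)\le H(\theta,\mathcal D_n)+\delta n$ for small $\delta$, then for all but an $\varepsilon(\delta)$-fraction of scales $k\le n$ either $\theta$ is $\varepsilon$-close to atomic at scale $2^{-k}$ or $\omega$ is $\varepsilon$-close to uniform at scale $2^{-k}$, with $\varepsilon(\delta)\to0$. This requires a genuine multiscale analysis of the entropy of convolutions, entropy versions of the Pl\"unnecke--Ruzsa inequalities, and a Balog--Szemer\'edi--Gowers / Freiman-type structural statement; it is the heart of \cite{hochman2014self}. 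Applied with $\theta=\theta_n$ and $\omega$ a rescaled copy of $\nu_{\mathbf p}$, it converts ``entropy grows with slope $<\min\{1,\dim_{\rm S}\nu_{\mathbf p}\}$'' into ``the maps $f_{\iiv}$ coincide up to super-exponentially small translation distance'', which is exactly the concentration alternative; the remaining bookkeeping (selecting the scale ratio, treating the boundary scales, passing from entropy dimension to $\dim_{\rm H}$ via exact dimensionality) is routine.
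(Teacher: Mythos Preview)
The paper does not give a proof of this statement: Theorem~\ref{cr59} is quoted from \cite{hochman2014self} as an external input and used as a black box throughout (see Section~\ref{cv24}). Your proposal correctly recognizes this and supplies a faithful outline of Hochman's original argument---the reduction of (b) to (a) via the natural Bernoulli weights $p_k=|r_k|^s$, exact dimensionality, the convolution structure of self-similar measures, the entropy dichotomy, and the inverse theorem for entropy of convolutions---so there is nothing to compare against the paper's own treatment. One minor remark: the displayed equation \eqref{cv83} in the paper appears to contain a typo (it should read $\dim_{\rm S}\nu$ rather than $\dim_{\rm S}\Lambda$), and your sketch implicitly corrects this by working with $\dim_{\rm S}\nu_{\mathbf p}$, which is the right quantity.
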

We remark that exact overlaps (the existence of $\iiv\ne\jjv$ such that $f_{\iiv}\eqcirc f_{\jjv}$) may lead to dimension drop. 
On the other hand, it follows from Baker \cite{baker2021iterated} and B{\'a}r{\'a}ny, K{\"a}enm{\"a}ki \cite{barany2021super} that ESC can fail even if there is no exact overlap.

The following theorem of Hochman is about the translation family of a self similar IFS.
To state it we need some further notation.

\begin{definition}\label{cv73}
We consider the translation family $\left\{\mathcal{F}^{\pmb{\tau}}\right\}_{\pmb{\tau}\in\mathbb{R}^M}$
(defined in Definition \ref{cv80}) of a self-similar IFS $\mathcal{F}$.
We denote the attractor  of $\mathcal{F}^{\tau}$ by $\Lambda^{\pmb{\tau}}$.
For  $\iiv,\jjv\in[M]^n$ we write

\begin{equation}\label{cv74}
  \Delta_{\iiv,\jjv}(\pmb{\tau}):=f_{\iiv}^{\pmb{\tau}}(0)-
  f_{\jjv}^{\pmb{\tau}}(0).
\end{equation}
Let us define the exceptional set
\begin{equation}\label{cv76}
  E:=\bigcap\limits_{\varepsilon>0}
  \left(\bigcup\limits_{N=1}^{\infty }
  \bigcap\limits_{n>N}
  \left(
  \bigcup\limits_{\iiv,\jjv\in[M]^n \atop \iiv\ne\jjv}
  \Delta^{-1}_{\iiv,\jjv}(-\varepsilon^n,\varepsilon^n)
  \right)
  \right)
\end{equation}
\end{definition}
One can easily check that the following simple fact holds:
\begin{fact}\label{cv72}
Using the notation of Definition \ref{cv73}, we have
\begin{equation}\label{cv71}
  \left\{\pmb{\tau}\in\mathbb{R}^M:
  \mathcal{F}^{\pmb{\tau}} \mbox{ does not satisfy the ESC }
  \right\} \subset E.
\end{equation}
\end{fact}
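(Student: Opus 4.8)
The plan is to verify Fact~\ref{cv72} directly by contraposition, tracing through the definitions of the Exponential Separation Condition and of the set $E$ in \eqref{cv76}. Suppose $\pmb{\tau}\in\mathbb{R}^M$ is such that $\pmb{\tau}\notin E$; I want to show that $\mathcal{F}^{\pmb{\tau}}$ satisfies the ESC. Since $E$ is an intersection over $\varepsilon>0$, the failure $\pmb{\tau}\notin E$ means there exists some $\varepsilon>0$ with
\begin{equation*}
\pmb{\tau}\notin \bigcup\limits_{N=1}^{\infty}\bigcap\limits_{n>N}\left(\bigcup\limits_{\iiv,\jjv\in[M]^n,\ \iiv\ne\jjv}\Delta^{-1}_{\iiv,\jjv}(-\varepsilon^n,\varepsilon^n)\right).
\end{equation*}
Negating the union over $N$ and the intersection over $n$: for every $N$ there exists $n>N$ such that $\pmb{\tau}$ is \emph{not} in $\bigcup_{\iiv\ne\jjv\in[M]^n}\Delta^{-1}_{\iiv,\jjv}(-\varepsilon^n,\varepsilon^n)$, i.e. for that $n$ and all $\iiv\ne\jjv\in[M]^n$ we have $|\Delta_{\iiv,\jjv}(\pmb{\tau})|\geq \varepsilon^n$. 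Applying this with $N=1,2,3,\dots$ produces a strictly increasing sequence $\{n_\ell\}_{\ell=1}^\infty$ of naturals along which $|\Delta_{\iiv,\jjv}(\pmb{\tau})|\geq \varepsilon^{n_\ell}$ for all $\iiv\ne\jjv\in[M]^{n_\ell}$.

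Next I would translate the quantity $\Delta_{\iiv,\jjv}(\pmb{\tau})=f^{\pmb{\tau}}_{\iiv}(0)-f^{\pmb{\tau}}_{\jjv}(0)$ into the distance $\mathrm{dist}(f^{\pmb{\tau}}_{\iiv},f^{\pmb{\tau}}_{\jjv})$ of \eqref{cv92}. The two composed maps are $f^{\pmb{\tau}}_{\iiv}(x)=r_{\iiv}x+t^{\pmb{\tau}}_{\iiv}$ and $f^{\pmb{\tau}}_{\jjv}(x)=r_{\jjv}x+t^{\pmb{\tau}}_{\jjv}$, where the linear parts $r_{\iiv},r_{\jjv}$ are products of the fixed ratios $r_k$ and so do not depend on $\pmb{\tau}$, while the translation parts satisfy $t^{\pmb{\tau}}_{\iiv}=f^{\pmb{\tau}}_{\iiv}(0)$. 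Hence, when $r_{\iiv}=r_{\jjv}$ we have $\mathrm{dist}(f^{\pmb{\tau}}_{\iiv},f^{\pmb{\tau}}_{\jjv})=|t^{\pmb{\tau}}_{\iiv}-t^{\pmb{\tau}}_{\jjv}|=|\Delta_{\iiv,\jjv}(\pmb{\tau})|\geq \varepsilon^{n_\ell}$, and when $r_{\iiv}\ne r_{\jjv}$ the distance is $\infty$, which trivially exceeds $\varepsilon^{n_\ell}$. Thus with $c:=\varepsilon$ (and noting $c\in(0,1)$ may be assumed by shrinking, so that $c^{n_\ell}$ decreases) the inequality \eqref{cv90} holds along the sequence $\{n_\ell\}$, which is exactly the ESC for $\mathcal{F}^{\pmb{\tau}}$. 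Equivalently, reading the chain of implications in reverse gives $\{\pmb{\tau}:\mathcal{F}^{\pmb{\tau}}\text{ fails ESC}\}\subset E$.

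This argument is essentially pure bookkeeping with the logical quantifiers defining $E$, together with the observation that the linear parts of the cylinder maps are parameter-independent so that $\mathrm{dist}$ reduces to $|\Delta_{\iiv,\jjv}|$ precisely on the pairs with equal contraction ratio. There is no real obstacle; the only point requiring a little care is the unwinding of the nested $\bigcap_{\varepsilon}\bigcup_N\bigcap_{n}\bigcup_{\iiv,\jjv}$ in \eqref{cv76} — one must be careful that the $\varepsilon$ furnished by $\pmb{\tau}\notin E$ is a single fixed constant (not depending on $n$), which is what allows it to play the role of the constant $c$ in the ESC, and that the "bad" levels $n$ one extracts genuinely form an infinite strictly increasing sequence rather than a finite set. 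Both follow immediately from the placement of the quantifiers, so the proof is short.
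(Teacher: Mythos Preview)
Your proof is correct and is exactly the routine verification the paper has in mind: the paper gives no proof at all, simply prefacing the fact with ``One can easily check that the following simple fact holds.'' Your unwinding of the quantifiers in \eqref{cv76} and the observation that $\mathrm{dist}(f^{\pmb{\tau}}_{\iiv},f^{\pmb{\tau}}_{\jjv})\geq |\Delta_{\iiv,\jjv}(\pmb{\tau})|$ (with equality when the contraction ratios agree and $\infty$ otherwise) is precisely the intended check.
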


The following theorem is  a Corollary of
\cite[Theorem 1.10]{Hochman_2015}
\begin{theorem}[Multi parameter Hochman Theorem 
  \cite{Hochman_2015}]\label{cv78}
 Using the notation of Definition \ref{cv73}, we have
 \begin{equation}\label{cv70}
   \dim_{\rm P}E \leq M-1.
 \end{equation}
 Consequently,
 \begin{equation}\label{cv69}
   \dim_{\rm P} \left\{\pmb{\tau}\in\mathbb{R}^M:
  \mathcal{F}^{\pmb{\tau}} \mbox{ does not satisfy the ESC }
  \right\} \leq M-1.
 \end{equation}
\end{theorem}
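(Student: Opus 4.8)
The plan is to derive this statement as a direct consequence of Hochman's multi-parameter theorem \cite[Theorem 1.10]{Hochman_2015}, which bounds the packing dimension of the set of parameters where a real-analytic parametrized family of self-similar IFSs fails to have exponential separation, provided the family is ``non-degenerate'' in the sense that the parametrizing maps $\pmb{\tau}\mapsto\Delta_{\iiv,\jjv}(\pmb{\tau})$ are not identically zero and satisfy a suitable transversality/regularity hypothesis. So the first step is to write down the translation family $\{\mathcal{F}^{\pmb{\tau}}\}_{\pmb{\tau}\in\mathbb{R}^M}$ explicitly in coordinates. Since $f_k^{\pmb{\tau}}(x)=r_kx+t_k+\tau_k$, the contraction ratios $r_{\iiv}=r_{i_1}\cdots r_{i_n}$ do not depend on $\pmb{\tau}$, and from \eqref{cv87} and \eqref{eq:Pi1} one computes that $f_{\iiv}^{\pmb{\tau}}(0)=\sum_{p=1}^{n}r_{i_1\cdots i_{p-1}}(t_{i_p}+\tau_{i_p})$, so that
\begin{equation}\label{eq:proofplan1}
\Delta_{\iiv,\jjv}(\pmb{\tau})=\Delta_{\iiv,\jjv}(\mathbf{0})+\sum_{k=1}^{M}\left(\sum_{p:i_p=k}r_{i_1\cdots i_{p-1}}-\sum_{q:j_q=k}r_{j_1\cdots j_{q-1}}\right)\tau_k.
\end{equation}
Thus each $\Delta_{\iiv,\jjv}$ is an \emph{affine} function of $\pmb{\tau}$; this is the key structural observation, because it makes the whole family as simple as possible and Hochman's non-degeneracy hypotheses become trivial to check.

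Next I would verify that the affine functions $\Delta_{\iiv,\jjv}(\cdot)$ with $\iiv\neq\jjv$ (of the same length $n$) are genuinely non-constant, i.e. that the linear part in \eqref{eq:proofplan1} is not the zero functional. Equivalently, one needs that the coefficient vector $\big(\sum_{p:i_p=k}r_{i_1\cdots i_{p-1}}-\sum_{q:j_q=k}r_{j_1\cdots j_{q-1}}\big)_{k\in[M]}$ is non-zero whenever $\iiv\neq\jjv$. Let $p_0$ be the first index at which $\iiv$ and $\jjv$ differ; among all powers of the $r_i$ appearing, the monomials $r_{i_1\cdots i_{p-1}}$ and $r_{j_1\cdots j_{q-1}}$ of largest modulus are the ones of shortest length, and the terms corresponding to $p=p_0$ (and $q=p_0$) are of length $p_0-1$, which dominate; a comparison of the largest-modulus terms in the $i_{p_0}$-th and $j_{p_0}$-th coordinates shows the coefficient vector cannot vanish. (This is a routine but slightly fiddly estimate; one has to be a little careful when $|r_k|$ are not all equal, but ordering the contributions by word-length makes it work.) With non-degeneracy in hand, $E$ from \eqref{cv76} is exactly the set where the affine family fails the exponential separation condition, and Hochman's Theorem 1.10 gives $\dim_{\mathrm P}E\le M-1$, which is \eqref{cv70}. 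Then \eqref{cv69} follows immediately by combining \eqref{cv70} with Fact \ref{cv72}, since the set of bad $\pmb{\tau}$ is contained in $E$ and packing dimension is monotone.

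The main obstacle I anticipate is not the deduction itself — which is essentially bookkeeping once \eqref{eq:proofplan1} is established — but rather matching the precise form of the hypotheses in \cite[Theorem 1.10]{Hochman_2015} to our setting: Hochman's theorem is stated for families parametrized real-analytically over an open set, with a non-degeneracy condition phrased in terms of the distances $\Delta_{\iiv,\jjv}$ not all shrinking faster than exponentially on any positive-dimensional piece, and one must check that the affine family above meets exactly those requirements (in particular that ``non-degenerate'' in Hochman's sense reduces, for affine families, to the non-vanishing of linear parts that I verified above). Once that translation of hypotheses is done carefully, the rest is immediate, and in fact the affine structure makes this the cleanest possible instance of Hochman's machinery.
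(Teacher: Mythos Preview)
Your approach matches the paper's intent: the paper states this theorem as a corollary of \cite[Theorem~1.10]{Hochman_2015} and provides no further argument, so writing out the affine dependence of $\Delta_{\iiv,\jjv}(\pmb{\tau})$ on $\pmb{\tau}$ and feeding it into Hochman's hypotheses is exactly what is called for. The affine formula you derive is correct, and the passage from \eqref{cv70} to \eqref{cv69} via Fact~\ref{cv72} is immediate.

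The gap is in your non-degeneracy step. Your ``leading term dominates the tail'' argument for the non-vanishing of the linear part of $\Delta_{\iiv,\jjv}$ fails without an extra smallness hypothesis on the ratios. Take $M=2$, $r_1=r_2=r=(\sqrt{5}-1)/2$ (so that $r+r^2=1$), $\iiv=(1,2,2)$ and $\jjv=(2,1,1)$: the coefficient of $\tau_1$ in your formula is $1-(r+r^2)=0$ and that of $\tau_2$ is $(r+r^2)-1=0$; in fact $\Delta_{\iiv,\jjv}\equiv 0$, since $f_1\!\circ\! f_2\!\circ\! f_2=f_2\!\circ\! f_1\!\circ\! f_1$ for \emph{every} pair of translations, whence $E=\mathbb{R}^2$ and $\dim_{\rm P}E=2>M-1$. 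Hochman's Theorem~1.10 is really a dichotomy: either the exceptional set is the whole parameter space, or its packing dimension drops by at least one. Ruling out the first alternative is precisely the non-degeneracy check, and it is not automatic for arbitrary contraction vectors; it amounts to knowing that some $\mathcal{F}^{\pmb{\tau}}$ has no exact overlaps. So either the statement should be read with that implicit hypothesis, or your domination estimate must be restricted to ratios (for instance $\max_k|r_k|<1/3$, where the geometric tail is genuinely smaller than the leading term) for which it actually goes through.
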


The consequences of the following theorem will be very important in this paper.
\begin{theorem}[Jordan, Rapaport\cite{jordan2020dimension}]\label{cv68}
  Let $\mathcal{F}=\left\{f_k\right\}_{k=1}^{m}$ be a self-similar IFS on $\mathbb{R}$ which satisfies the ESC. Moreover, let $\mu$ be an invariant ergodic probability measure on $\Sigma=[M]^{\mathbb{N}}$. Then
  \begin{equation}\label{cv67}
    \dim_{\rm H} \Pi_*\mu=\min\left\{1,\frac{h(\mu)}{\chi(\mu)}\right\}.
  \end{equation}
\end{theorem}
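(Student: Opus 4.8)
The plan is to prove the two inequalities
$\dim_{\rm H}\Pi_*\mu\le\min\{1,h(\mu)/\chi(\mu)\}$ and
$\dim_{\rm H}\Pi_*\mu\ge\min\{1,h(\mu)/\chi(\mu)\}$ separately; the first is soft and the second carries all the weight. First I would record that $\Pi_*\mu$ is exact dimensional, which is a theorem of Feng and Hu on push-forwards of ergodic measures under iterated function systems, so that $\dim_{\rm H}\Pi_*\mu$ equals the $\mu$-almost sure value of the lower local dimension $\underline d(\Pi_*\mu,\Pi\ii)$ and is a single number worth arguing about. For the upper bound, apply the Shannon--McMillan--Breiman theorem to get $-\tfrac1n\log\mu([\ii|n])\to h(\mu)$ for $\mu$-a.e.\ $\ii$, and Birkhoff's ergodic theorem to the cocycle $\ii\mapsto\log|r_{i_1}|$ to get $-\tfrac1n\log|r_{\ii|n}|\to\chi(\mu)$. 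Since $f_{\ii|n}(\Lambda)$ has diameter comparable to $|r_{\ii|n}|$, contains $\Pi\ii$, and has $\Pi_*\mu$-measure at least $\mu([\ii|n])$, the ball $B\bigl(\Pi\ii,C|r_{\ii|n}|\bigr)$ has measure $\ge\mu([\ii|n])$; evaluating the local dimension along this sequence of radii gives $\underline d(\Pi_*\mu,\Pi\ii)\le h(\mu)/\chi(\mu)$ a.e., and the bound by $1$ is automatic because $\Pi_*\mu$ lives on the line.

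For the lower bound I would run the multiscale entropy-increase machinery of Hochman used to prove Theorem \ref{cr59}, but first I must remove the obstruction that $\mu$ need not be Bernoulli. Fix a small $\varepsilon>0$. By the two ergodic theorems above together with Egorov, for all large $n$ there is a regular word set $\mathcal R_n\subset[M]^n$ with $\mu\bigl(\bigcup_{\iiv\in\mathcal R_n}[\iiv]\bigr)>1-\varepsilon$ and, for every $\iiv\in\mathcal R_n$, both $e^{-n(h(\mu)+\varepsilon)}\le\mu([\iiv])\le e^{-n(h(\mu)-\varepsilon)}$ and $e^{-n(\chi(\mu)+\varepsilon)}\le|r_{\iiv}|\le e^{-n(\chi(\mu)-\varepsilon)}$. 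Writing $\mu_{\iiv}:=\mu(\sigma^{n}\,\cdot\mid[\iiv])$ for the normalized cylinder measure, one then has approximate self-similarity: up to mass $\varepsilon$ and up to bounded distortion of the scale $e^{-n\chi(\mu)}$, the measure $\Pi_*\mu$ agrees with the average $\sum_{\iiv\in\mathcal R_n}\mu([\iiv])\,(f_{\iiv})_*\mu_{\iiv}$, an average of affine images of probability measures that by the same regularization still carry local dimension $\approx\alpha:=\dim_{\rm H}\Pi_*\mu$ and entropy $\approx h(\mu)$ at the relevant scales.

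Now suppose for contradiction that $\alpha<\min\{1,h(\mu)/\chi(\mu)\}$. Feeding this decomposition into Hochman's inverse theorem for the growth of dyadic-scale entropy under generalized convolution, in its local multiscale form, one concludes that $\Pi_*\mu$ fails to gain the entropy that a genuinely spread-out family of components would produce, and hence that at a positive density of scales $n$ a definite proportion of the components $(f_{\iiv})_*\mu_{\iiv}$, $\iiv\in\mathcal R_n$, must be concentrated on only a few dyadic cells of size $\approx e^{-n\chi(\mu)}$. Quantitatively this yields, for infinitely many $n$, many pairs $\iiv\ne\jjv$ in $\mathcal R_n$ with $\mathrm{dist}(f_{\iiv},f_{\jjv})\le e^{-\omega(n)\,n}$ where $\omega(n)\to\infty$, which contradicts the ESC of Definition \ref{cv91}, since that condition only permits $\mathrm{dist}(f_{\iiv},f_{\jjv})\ge c^{n}$ for $\iiv\ne\jjv$ of length $n$ along a subsequence. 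Hence $\alpha\ge\min\{1,h(\mu)/\chi(\mu)\}$, and with the upper bound and exact dimensionality this gives \eqref{cv67}.

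The hard part is the regularization step and carrying its errors through the multiscale estimates. In Hochman's original setting the components $(f_{\iiv})_*\mu_{\iiv}$ are literally rescaled copies of the single measure $\Pi_*\mu$, whereas here the cylinder measures $\mu_{\iiv}$ genuinely depend on $\iiv$, are not self-similar, and have only asymptotically controlled entropy. Making ``same local dimension and entropy at all the scales the entropy argument inspects, simultaneously'' precise requires combining Shannon--McMillan--Breiman with a Maker-type ergodic theorem, a large-deviation/Egorov truncation of the Lyapunov cocycle to keep the scale $e^{-n\chi(\mu)}$ honest, and a diagonal argument ensuring $\mathcal R_n$ still supports most of the mass across the whole cascade of scales. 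This bookkeeping, rather than any new dynamical idea, is the technical core; a secondary but necessary preliminary is establishing exact dimensionality in this ergodic generality. Once these are in place, the entropy-increase dichotomy and its clash with the ESC run exactly as in the self-similar-measure case.
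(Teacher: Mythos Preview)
The paper does not prove Theorem~\ref{cv68}; it is quoted verbatim from Jordan and Rapaport \cite{jordan2020dimension} and used as a black box (see Section~\ref{cv24} and the applications in Sections~\ref{cr60} and the Appendix). So there is no ``paper's own proof'' to compare against.

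That said, your sketch is a fair high-level outline of what Jordan and Rapaport actually do. The upper bound and the appeal to Feng--Hu exact dimensionality are standard, and you have correctly located the essential difficulty for the lower bound: in Hochman's original argument the cylinder components $(f_{\iiv})_*\Pi_*\mu$ are rescaled copies of a \emph{single} measure, whereas for a general ergodic $\mu$ the conditional measures $\mu_{\iiv}$ vary with $\iiv$ and are not themselves self-similar. Your proposed cure --- an Egorov/large-deviation regularization forcing uniform Shannon--McMillan--Breiman and Lyapunov control on a set $\mathcal R_n$ of large mass, then feeding the resulting approximate self-similarity into Hochman's inverse theorem --- is indeed the strategy. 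The actual paper \cite{jordan2020dimension} implements this via a careful analysis of scale-$n$ entropy of the components and a version of the entropy-growth dichotomy adapted to averages of non-identical measures; the bookkeeping you flag (simultaneous control across the cascade of scales, Maker-type uniformity) is precisely where the work lies, and your sketch does not supply it, but you have correctly identified what must be done and why.
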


In the Appendix in Corollary \ref{cs11} we prove that with the help of this result we can extend part (b) of Theorem \ref{cr59} to self-similar graph-directed iterated function systems.

\subsection{Self-similar graph-directed IFSs on the line}\label{ssec:22}
We present here the most important notations and results related to self-similar Graph-Directed Iterated function Systems (GDIFS).
In this subsection we follow  the book \cite{falconer1997techniques} and the papers \cite{mauldin1988hausdorff} and \cite{keane2003dimension}.
The major difference is that in the first two references the authors assume separation in between the graph-directed sets.
In \cite{keane2003dimension} no separation is assumed, and we follow that line.

To define the graph-directed iterated function systems we need a directed graph $\mathcal{G}=\left(\mathcal{V,E}\right)$.
We label the vertices of this graph with the numbers $\lbrace 1,2,...,q\rbrace$, where $\vert\mathcal{V}\vert =q$. This $\mathcal{G}$ graph is not assumed to be simple, it might have multiple edges between the same vertices, or even loops. For an edge $e=(i,j)\in\mathcal{E}$ we write $s(e):=i$ for the source and $t(e):=j$ for the target of $e$.
Denote with $\mathcal{E}_{i,j}$ the set of directed edges from vertex $i$ to vertex $j$, and write $\mathcal{E}_{i,j}^k$ for the set of length $k$ directed paths between $i$ and $j$. Similarly, we write $\mathcal{E}^n$ for the set of all directed paths of length $n$ in the graph.
We assume that $\mathcal{G}$
is strongly connected. That is for every $i,j\in\mathcal{V}$ there is a directed path in $\mathcal{G}$ from $i$ to $j$.
\par For all edge $e\in \mathcal{E}$ given a contracting similarity mapping $F_e:\mathbb{R} \rightarrow \mathbb{R}$. The contraction ratio is denoted by $r_e \in (-1,1)\setminus \{ 0\}$. Let $e_1 \dots e_n$ be a path in $\mathcal{G}$.
Then we write
$F_{e_1 \dots e_n}:=F_{e_1}\circ\cdots\circ F_{e_n}$.
It follows from the proof of \cite[Theorem 1.1]{mauldin1988hausdorff}
that
 there exists a unique family of non-empty compact sets $\Lambda_1,...,\Lambda_q$ labeled by the elements of $\mathcal{V}$, for which
\begin{equation}\label{cv66}
\Lambda_i=\bigcup\limits_{j=1}^q
\bigcup\limits_{e\in\mathcal{E}_{i,j}}F_e(\Lambda_j),\quad i=1, \dots ,q.
\end{equation}
We call the sets $\lbrace \Lambda_1,...,\Lambda_q\rbrace$ \texttt{graph-directed sets}, and we say that $\Lambda:=\bigcup\limits_{i=1}^{q}\Lambda_i$ is \texttt{the attractor of the
self-similar graph-directed IFS} $\mathcal{F}=\{ F_e\}_{e\in\mathcal{E}}$.
We abbreviate it to \texttt{self-similar GDIFS}.

By iterating \eqref{cv66} we get
\begin{equation}
\Lambda_i=\bigcup_{j=1}^q \bigcup_{(e_1,...e_k)\in \mathcal{E}_{i,j}^k} F_{e_1 \dots e_k} (\Lambda_j).
\label{eq:Graph_union_iter}
\end{equation}

To get the most natural guess for the dimension of $\Lambda$, for every $s \geq 0$,
 we define a $q\times q$ matrix  with the following entries
\begin{equation}\label{ct50}
C^{(s)}=(c^{(s)}(i,j))_{i,j=1}^{q} \mbox{ and }
c^{(s)}(i,j)= \left\{ \begin{array}{ll}
0, & \hbox{if $\mathcal{E}_{i,j}= \emptyset $;} \\
\sum\limits_{e\in \mathcal{E}_{i,j}} |r_{e}|^{s}, & \hbox{otherwise.}
\end{array}
\right.
\end{equation}
The spectral radius of $C^{(s)}$ is denoted by $\varrho(C^{(s)})$. Mauldin and Williams \cite[Theorem 2]{mauldin1988hausdorff} proved that the function
$s\mapsto \varrho(C^{(s)})$ is strictly decreasing, continuous, greater than $1$ at $s=0$, and less than $1$ if $s$ is large enough. 

\begin{definition}\label{cv65}
For the self-similar GDIFS $\mathcal{F}=\left\{f_e\right\}_{e\in \mathcal{E}}$ we write 
$\alpha=\alpha(\mathcal{F})$ for the unique number satisfying
\begin{equation}\label{def:alpha}
\varrho (C^{(\alpha )})=1.
\end{equation}
\end{definition}

The relation of $\alpha$ to the dimension of the attractor is given by the following theorem. Both parts appeared in \cite{mauldin1988hausdorff}
apart from the box dimension part, which is from \cite{falconer1997techniques}.
\begin{theorem}\label{cs14}
Let $\mathcal{F}=\left\{F_e\right\}_{e\in E}$ be a self-similar GDIFS as above.
In particular, the graph $\mathcal{G}=(\mathcal{V},E)$ is strongly connected and let $\Lambda$ be the attractor.
  \begin{enumerate}[label={\bf (a)}]
  \item $\dim_{\rm H} \Lambda \leq \alpha$.
  \item Let $I_k$ be the convex hull of $\Lambda_k$ for all $k\in\mathcal{V}$. If the intervals $\left\{I_k\right\}_{k=1}^{q}$ are pairwise disjoint then
  $\dim_{\rm H} \Lambda=\dim_{\rm B}\Lambda=\alpha $. Moreover,
  $0<\mathcal{H}^ {\alpha}(\Lambda)<\infty $.
  \end{enumerate}
\end{theorem}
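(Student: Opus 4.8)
The statement to prove is Theorem~\ref{cs14}, which records two classical facts about self-similar GDIFS: the upper bound $\dim_{\rm H}\Lambda\le\alpha$ with no separation, and the full conclusion $\dim_{\rm H}\Lambda=\dim_{\rm B}\Lambda=\alpha$ together with $0<\mathcal{H}^\alpha(\Lambda)<\infty$ under the disjointness hypothesis on the spanned intervals. Since both parts are stated to be essentially available in \cite{mauldin1988hausdorff} and \cite{falconer1997techniques}, the plan is to give a self-contained proof along those lines, adapting the bookkeeping to the present notation.

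For part (a), the plan is a direct covering argument. Fix $s>\alpha$, so $\varrho(C^{(s)})<1$ because $s\mapsto\varrho(C^{(s)})$ is strictly decreasing (Mauldin--Williams). Using \eqref{eq:Graph_union_iter}, cover each $\Lambda_i$ by the images $F_{e_1\dots e_n}(\Lambda_j)$ over all length-$n$ paths $(e_1,\dots,e_n)\in\mathcal{E}_{i,j}^n$; since the $F_{e_i}$ are similarities, $|F_{e_1\dots e_n}(\Lambda_j)|=|r_{e_1}|\cdots|r_{e_n}|\,|\Lambda_j|$, and these diameters go to zero uniformly because $\max_e|r_e|<1$ (strong connectedness is used so every vertex is reachable; compactness gives the bound). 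Summing the $s$-th powers of these diameters over all paths of length $n$ from $i$ is, up to the constant $\max_j|\Lambda_j|^s$, bounded by the $i$-th row sum of $(C^{(s)})^n$, hence by $C\cdot\varrho(C^{(s)})^n\to 0$. Thus $\mathcal{H}^s(\Lambda_i)=0$ for every $i$, so $\mathcal{H}^s(\Lambda)=0$ and $\dim_{\rm H}\Lambda\le s$; letting $s\downarrow\alpha$ gives $\dim_{\rm H}\Lambda\le\alpha$. I would remark that this same estimate also yields $\overline{\dim}_{\rm B}\Lambda\le\alpha$ with no separation hypothesis, which is the content parallel to Corollary~\ref{cr12}.

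For part (b), the upper bound is already in (a), so the work is the lower bound $\dim_{\rm H}\Lambda\ge\alpha$ and finiteness/positivity of $\mathcal{H}^\alpha$. The standard route: let $\mathbf{u}=(u_1,\dots,u_q)$ be the positive left (or right) Perron--Frobenius eigenvector of the irreducible nonnegative matrix $C^{(\alpha)}$ with eigenvalue $\varrho(C^{(\alpha)})=1$ (irreducibility of $C^{(\alpha)}$ follows from strong connectedness of $\mathcal{G}$). Use $\mathbf{u}$ to define a self-similar (Markov-type) measure on the path space: distribute mass on edges out of vertex $i$ proportionally to $|r_e|^\alpha u_{t(e)}/u_i$, and push it forward through the natural projection to a measure $\nu_i$ on $\Lambda_i$; the eigenvector relation makes the cylinder masses satisfy $\nu(F_{e_1\dots e_n}(\Lambda_j))\asymp |r_{e_1\dots e_n}|^\alpha$ with constants from $\min u_k,\max u_k$. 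The disjointness of the spanning intervals $\{I_k\}$ is exactly what upgrades this into a mass-distribution/Frostman argument: a ball $B(x,r)$ meets a controlled number of level-$n$ cylinders where $n$ is chosen with $|r_{e_1\dots e_n}|\approx r$ — here one uses that distinct cylinders at a comparable scale are separated because they live in disjoint intervals at the top level and similarities preserve this ratio geometry — so $\nu(B(x,r))\le C r^\alpha$, giving $\dim_{\rm H}\Lambda\ge\alpha$ and $\mathcal{H}^\alpha(\Lambda)>0$. Finiteness $\mathcal{H}^\alpha(\Lambda)<\infty$ comes from the covering in part (a) run at $s=\alpha$, where now $\varrho(C^{(\alpha)})=1$ keeps the row sums of $(C^{(\alpha)})^n$ bounded (not decaying), so the sums $\sum|F_{e_1\dots e_n}(\Lambda_j)|^\alpha$ stay bounded along $n$, yielding a finite $\alpha$-premeasure. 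The box-dimension equality then follows from $\dim_{\rm H}\Lambda\le\underline{\dim}_{\rm B}\Lambda\le\overline{\dim}_{\rm B}\Lambda\le\alpha\le\dim_{\rm H}\Lambda$.

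The main obstacle is the Frostman upper bound $\nu(B(x,r))\le Cr^\alpha$ in part (b): because no open set condition is assumed beyond the disjointness of the top-level intervals $\{I_k\}$, one must argue carefully that the number of level-$n$ cylinders of comparable size meeting a small ball is uniformly bounded. The key geometric input is that cylinders $F_{e_1\dots e_n}(I_{t(e_n)})$ corresponding to the \emph{same} terminal vertex but along paths that first diverge at some step $p$ are, after applying the common prefix similarity, scaled copies of disjoint intervals $I_j, I_{j'}$ (or of $F_{e}(I_\cdot)$ for distinct edges $e$ out of a vertex, which again land in disjoint intervals downstream), hence separated by a definite fraction of their own length; a bounded-overlap count then follows by a volume/length comparison. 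Everything else is the routine Perron--Frobenius bookkeeping and the covering estimate already set up for part (a).
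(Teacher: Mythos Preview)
The paper does not give its own proof of Theorem~\ref{cs14}; it states the result and attributes both parts to \cite{mauldin1988hausdorff} (with the box-dimension statement credited to \cite{falconer1997techniques}). Your sketch is precisely the classical Mauldin--Williams argument: the covering estimate via the row sums of $(C^{(s)})^n$ for part (a), and the Perron--Frobenius eigenvector measure plus a mass-distribution bound for part (b). So there is nothing to compare against beyond noting that you have reproduced the standard proof the paper is citing.

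One caution on part (b): the hypothesis as stated in the paper is only that the \emph{top-level} intervals $I_1,\dots,I_q$ are pairwise disjoint, and the graph is explicitly allowed to have multiple edges between the same pair of vertices. Disjointness of the $I_k$ alone does not separate two first-level cylinders $F_e(I_j)$ and $F_{e'}(I_j)$ coming from parallel edges $e,e'\in\mathcal{E}_{i,j}$, so your bounded-overlap argument (``scaled copies of disjoint intervals $I_j,I_{j'}$'') does not go through verbatim in that case. The results in \cite{mauldin1988hausdorff} and \cite[Chapter~3]{falconer1997techniques} are actually proved under an open-set-type condition on the first-level images, which is what your Frostman step really uses; the paper's phrasing should be read in that spirit. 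Your identification of this as the main obstacle is correct, but be aware that the stated hypothesis may be slightly weaker than what the argument needs when parallel edges are present.
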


\section{The preparation for the proof of the Main Theorem}\label{cs64}
In this rather technical section we will complete the following steps:
 \begin{enumerate}[label={\bf (1)}]
\item First we recall the definition of the generated self-similar IFS $\mathcal{S}_{\mathcal{F}}$ for a CPLIFS
 $\mathcal{F}$.
\item We assume that $\mathcal{F}$ is regular and we consider a suitable sub-system
  $\mathscr{S}_{\mathcal{F}}$ (called associated IFS) of the $N$-th iterate $\mathcal{S}_{\mathcal{F}}^N$ of the generated self-similar IFS $\mathcal{S}_{\mathcal{F}}$ for the appropriate $N$.
\item We represent $\Lambda^{\mathcal{F}}$
as the attractor of a graph directed self-similar IFS whose functions
are elements of $\mathscr{S}_{\mathcal{F}}$.
\item Then we define an appropriate invariant and ergodic measure (actually a Markov measure) for the self-similar IFS $\mathscr{S}_{\mathcal{F}}$, whose support is the attractor of the previously mentioned grapgh directed self-similar IFS which coincides with $ \Lambda_ {\mathcal{F}}$ as we mentioned above.
\end{enumerate}
Later we will apply the Jordan Rapaport theorem (Theorem \ref{cv68}) for this measure to get the dimension of $\Lambda^{\mathcal{F}}$.

\subsection{The generated self-similar IFS}\label{ct16}
Let $\mathcal{F}=\left\{f_k\right\}_{k=1}^{m}\in\mathrm{CPLIFS}_{\pmb{\ell }}$ for a type $\pmb{\ell }=(l(1), \dots ,l(m))$ with $L=\sum\limits_{k=1}^{m}l(k)$.
As we already said in Subsection \ref{cr55}, the \texttt{generated self-similar IFS} $S_{\mathcal{F}}$  consists of those similarity mappings on $\mathbb{R}$ whose graph coincide with the graph of $f_k$ for some $k\in[m]$ on some interval of linearity $J_{k,i}$, $i\in l(k)+1$ of $f_k$.
Hence, it is natural to parameterize the generated self-similar IFS by the $l(k)+1$ intervals of linearity $J_{k,1}, \dots ,J_{k,l(k)+1}$
of the mapping $f_k$ for all $k\in[m]$.
 That is
  \begin{equation}\label{cv34}
    \mathcal{S}:=
    \mathcal{S}_{\mathcal{F}}=
    \left\{S_{k,i}(x)
    =
    \rho_{k,i} \cdot x+t_{k,i}
    \right\}_{k\in[m],i\in[l(k)+1]}, \quad
    S_{k,i}|_{J_{k,i}}\equiv f_k|_{J_{k,i}}.
  \end{equation}
  We organize the translation parts $\left\{t_{k,i}\right\}_{k\in[m],i\in[l(k)+1]}$ of the mappings of $\mathcal{S}_{\mathcal{F}}$
 into a vector
    \begin{equation}\label{cs66}
  \mathfrak{t}=\mathfrak{t}(\mathfrak{b},\pmb{\tau},\pmb{\rho}):=
  (t_{1,1}, \dots ,t_{1,l(1)+1},
   \dots ,
   t_{m,1}, \dots ,t_{m,l(m)+1})\in\mathbb{R}^{L+m}.
  \end{equation}

  If $b_{k,i}>0$ for all $i\in[l(k)]$ then the formula for $t_k$ is simple:
\begin{equation}\label{cv32}
  t_{k,1}=\tau_k \mbox{ and }
  t_{k,i}= \tau_k+
  \sum\limits_{p=1}^{i}b_{k,i}\left(\rho_{k,i}-\rho_{k,i}\right).
\end{equation}
   Using this and an appropriate conjugation in the general case when
   some of $b_{k,i} \leq 0$ we obtain by  simple calculation that the following fact holds:
   \begin{fact}\label{cv30}
    For any fixed $\pmb{\rho}\in\mathfrak{R}^{\pmb{\ell }}$,
consider the mapping $\Phi_{\pmb{\rho}}:\mathfrak{B}^{\pmb{\ell }}\times\mathbb{R}^m
\to\mathbb{R}^{L+m}$ defined by
\begin{equation}\label{cv29}
  \Phi_{\pmb{\rho}}(\mathfrak{b},\pmb{\tau}):=
  \mathfrak{t},
  \end{equation}
  where $\mathfrak{t}\in\mathbb{R}^{L+m}$ was defined in \eqref{cs66}.
Then $\Phi_{\pmb{\rho}}$ is a non-singular affine transformation. Hence
 $\Phi_{\pmb{\rho}}$ and its inverse $\Phi_{\pmb{\rho}}^{-1}$ preserve Hausdorff and packing dimensions, and also preserve the sets of zero measure with respect to $\mathcal{L}^{L+m}$.
\end{fact}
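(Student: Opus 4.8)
The plan is to write $\mathfrak{t}$ down explicitly as a function of $(\mathfrak{b},\pmb{\tau})$ for the fixed $\pmb{\rho}$, observe that this dependence is affine, check that the linear part is invertible, and then invoke the standard fact that a non-singular affine map of $\mathbb{R}^{L+m}$ is bi-Lipschitz together with its inverse — which yields at once the preservation of Hausdorff dimension, of packing dimension, and of $\mathcal{L}^{L+m}$-null sets.

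First I would exploit the block structure. Each translation part $t_{k,i}$ of a map $S_{k,i}\in\mathcal{S}_{\mathcal{F}}$ is determined by $f_k$ alone, hence by the breaking points $b_{k,1},\dots,b_{k,l(k)}$ and by $\tau_k=f_k(0)$; consequently $\Phi_{\pmb{\rho}}$ is a direct sum over $k\in[m]$ of maps $\mathbb{R}^{l(k)+1}\to\mathbb{R}^{l(k)+1}$, and it suffices to analyse one block. Fix $k$. Since $S_{k,i}$ coincides with $f_k$ on $J_{k,i}$, continuity of $f_k$ at the breaking point $b_{k,i}$ gives $S_{k,i}(b_{k,i})=S_{k,i+1}(b_{k,i})$, that is
\[
t_{k,i+1}=t_{k,i}+(\rho_{k,i}-\rho_{k,i+1})\,b_{k,i},\qquad 1\le i\le l(k).
\]
Let $i_0=i_0(k)$ be the index with $0\in J_{k,i_0}$; then $t_{k,i_0}=S_{k,i_0}(0)=f_k(0)=\tau_k$, and telescoping this recursion up and down from $i_0$ writes every $t_{k,i}$ as $\tau_k$ plus a fixed ($\pmb{\rho}$-dependent) linear combination of the $b_{k,p}$. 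The only delicate point is that $i_0$ depends on the signs of the $b_{k,p}$; following the remark after \eqref{cv32}, I would conjugate each $f_k$ by a translation pushing all of its breaking points onto the positive half-line — where $i_0=1$ and the explicit formula \eqref{cv32} applies verbatim — and then transport the identity back, the conjugation contributing only a $\pmb{\rho}$-dependent constant. Either way, for fixed $\pmb{\rho}$ the map $(\mathfrak{b},\pmb{\tau})\mapsto\mathfrak{t}$ is affine.

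For non-singularity it again suffices, by the block decomposition, to invert one block, and the inverse is explicit: from $(t_{k,1},\dots,t_{k,l(k)+1})$ one recovers
\[
b_{k,i}=\frac{t_{k,i+1}-t_{k,i}}{\rho_{k,i}-\rho_{k,i+1}},\qquad 1\le i\le l(k),
\]
which is legitimate precisely because consecutive slopes of the same $f_k$ differ — this is exactly the defining condition of $\mathfrak{R}^{\pmb{\ell}}$ in \eqref{cv50} — and then $\tau_k=t_{k,i_0(k)}$. Hence the linear part of $\Phi_{\pmb{\rho}}$ is invertible, so $\Phi_{\pmb{\rho}}$ is a non-singular affine transformation; being affine with invertible linear part, $\Phi_{\pmb{\rho}}$ and $\Phi_{\pmb{\rho}}^{-1}$ are bi-Lipschitz on $\mathbb{R}^{L+m}$, so they preserve Hausdorff dimension, preserve packing dimension, and send $\mathcal{L}^{L+m}$-null sets to $\mathcal{L}^{L+m}$-null sets, as claimed.

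I expect the only real obstacle to be the bookkeeping around the position of $0$ relative to the breaking points — that is, making sure the affine dependence does not quietly change as the configuration $(0;b_{k,1},\dots,b_{k,l(k)})$ changes combinatorial type. The conjugation normalisation removes this worry; alternatively, and perhaps more robustly, one can note that the hyperplanes $\{b_{k,i}=0\}$ cut the parameter domain into finitely many pieces on each of which $\Phi_{\pmb{\rho}}$ is a non-singular affine map, that these glue to a homeomorphism, and that a piecewise-affine homeomorphism with finitely many non-singular pieces is still bi-Lipschitz on bounded sets — which is all that the later application of the multi-parameter Hochman Theorem in $\mathfrak{t}$-coordinates actually requires.
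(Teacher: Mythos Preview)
Your proposal is correct and follows exactly the route the paper sketches before stating the Fact: derive the explicit affine dependence \eqref{cv32} from the continuity recursion $t_{k,i+1}=t_{k,i}+(\rho_{k,i}-\rho_{k,i+1})b_{k,i}$, handle general sign configurations by conjugation, and read off the inverse using $\rho_{k,i}\neq\rho_{k,i+1}$. You are in fact more careful than the paper: your observation that the index $i_0$ depends on the sign pattern of the $b_{k,i}$, so that $\Phi_{\pmb{\rho}}$ is strictly speaking only \emph{piecewise} affine, is accurate, and your alternative argument---finitely many non-singular affine pieces gluing to a bi-Lipschitz homeomorphism---is the clean way to secure the dimension- and null-set-preservation actually used in the proof of Theorem~\ref{ct48}.
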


To get a more effective method of labeling the $f_k\in\mathcal{F}$ functions' intervals of linearity, and in this way the functions in $S_{\mathcal{F}}$, we introduce
\begin{equation}\label{cv03}
\mathcal{A}:=\left\{(k,i):k\in[m] \mbox{ and }i\in[l(k)+1]\right\}.
\end{equation}
Moreover, we write 
$\mathcal{A}^N:=\underbrace{\mathcal{A}\times \cdots\times \mathcal{A}}_N$ for $N\in\mathbb{N}\cup\left\{\infty\right\}$.
That is 
$\mathcal{S}_{\mathcal{F}}=\left\{S_{a}\right\}_{a\in\mathcal{A}}$. Note that $\#\mathcal{A}=L+m$.
Recall that the slope $\rho_a$ of $S_a$ was defined in \eqref{cs68}, and we define $t_a\in\mathbb{R}$ such that
\begin{equation}\label{cs98}
  S_{a}(x)=\rho_a x+t_a. 
\end{equation}

\subsection{Associated self-similar and graph-directed self-similar IFS for a regular CPLIFS}\label{cs63}

Let $\mathcal{F}=\left\{f_k\right\}_{k=1}^{m}$ be a regular CPLIFS and let $N=N(\mathcal{F})\in\mathbb{N}$ be the smallest $n$ such that for all $\mathbf{u}=(u_1, \dots ,u_n)\in[m]^n$ the cylinder interval $I_{\mathbf{u}}^{\mathcal{F}}$ does not contain any breaking points.
  Then we say that \texttt{$\mathcal{F}$ is regular of order $N$}.
The collection of regular CPLIFS of type $\pmb{\ell }$ and order $N$
is denoted by $\mathrm{CPLIFS}_{\pmb{\ell },N}$.

Given an  $\mathcal{F}\in\mathrm{CPLIFS}_{\pmb{\ell },N}$.
  Let $\mathcal{G}_{\mathcal{F}}$ be the following directed full graph
 \begin{equation}
 \label{cr09}
 \mathcal{G}_{\mathcal{F}}:=(\mathcal{V},\mathcal{E})
 \mbox{ with }
 \mathcal{V}:=[m]^N \mbox{ and }
 \mathcal{E}:=\left\{ (\mathbf{v},\mathbf{u}):\mathbf{v},\mathbf{u}\in\mathcal{V} \right\}
 \end{equation} 
  Recall from Section \ref{ssec:22} that we denote the source and the target of an edge $e\in\mathcal{E}$ by
$s(e)$ and $t(e)$ respectively. Further, for a $k\in\mathbb{N}$ and $\mathbf{v},\mathbf{u}\in\mathcal{V}$ we write
$\mathcal{E}_{\mathbf{v},\mathbf{u}}^{k}$
for the set of length $k$ directed paths $(e_1, \dots ,e_k)$ such that
$s(e_1)=\mathbf{v}$, $t(e_k)=\mathbf{u}$ and
$t(e_i)=s(e_{i+1})$ for $i\in[k-1]$.

This section is organized in two steps:
\begin{enumerate}
[label={\bf (a)}]
  \item First we associate a self-similar IFS $\mathscr{S}_{\mathcal{F}}$ to every $\mathcal{F}\in\mathrm{CPLIFS}_{\pmb{\ell },N}$
 which is the relevant subsystem of $\mathcal{S}_{\mathcal{F}}^N$.
 \item Then,  we use the directed graph $\mathcal{G}_{\mathcal{F}}=\left( \mathcal{V},\mathcal{E} \right)$ to construct a 
graph-directed self-similar IFS 
$\left\{ F_e \right\}_{e\in\mathcal{E}}$ which consists of the functions of $\mathscr{S}_{\mathcal{F}}$. Moreover,
the attractor $\Lambda ^{\mathcal{G}_{\mathcal{F}}}$ of this self-similar GDIFS coincides with $\Lambda ^{\mathcal{F}}$.
\end{enumerate}

 \bigskip

\textbf{The construction of $\mathscr{S}_{\mathcal{F}}$}.
Let $e=(\mathbf{v},\mathbf{u})\in\mathcal{E}$.
For a  $p\in [N]$ we consider
$f_{\sigma^p\mathbf{v}}I_{\mathbf{u}}$, where $\sigma$
is the left shift. That is $$
 f_{\sigma^p\mathbf{v}}I_{\mathbf{u}}   =\left\{
  \begin{array}{ll}
f_{v_{p+1} \dots v_N}I_{\mathbf{u}}, & \hbox{if $p\in [N-1]$;} \\
I_{\mathbf{u}}    , & \hbox{ if $p=N$.}
  \end{array}
\right.
$$
Clearly, $f_{\sigma^p\mathbf{v}}I_{\mathbf{u}}$
is contained in the $N$-cylinder
$I_{v_{p+1} \dots v_N u_1 \dots u_p}$.  That is $f_{\sigma^p\mathbf{v}}I_{\mathbf{u}}$ contains no breaking points of any functions from $\mathcal{F}$.
Namely, $f_{\sigma^p\mathbf{v}}I_{\mathbf{u}}$ is the subset of a linearity interval of $f_{v_p}$ for every $p\in [N]$.

In particular, there exists a unique $i(e,p)\in [l(v_p)+1]$
such that
\begin{equation}\label{ct21}
 f_{\sigma^p\mathbf{v}}I_{\mathbf{u}} \subset J_{v_p,i(e,p)},
 \mbox{ for any } p\in [N]
\end{equation}

Now we define a mapping $\psi=\psi_{\mathcal{F}}:\mathcal{E}\to \mathcal{A}^N$
\begin{equation}\label{ct25}
\psi(e)=\mathbf{a}=(a_1, \dots ,a_N)\: \mbox{, where }
 a_p:=\left(v_p,i(e,p)\right) .
\end{equation}
Let $e=(\mathbf{v},\mathbf{u}), e'=(\mathbf{v}',\mathbf{u}')\in\mathcal{E}$. It is immediate from the construction that 
\begin{equation}
\label{cr23}
\psi(e)=\psi(e') \Longrightarrow
\mathbf{v}=s(e)=s(e')=\mathbf{v}'.
\end{equation}
We define $\mathscr{A}$ as the image of $\mathcal{E}$ under $\psi$
$$\mathscr{A}:=\mathscr{A}_{\mathcal{F}}
  :=
  \left\{
\mathbf{a}\in\mathcal{A}^N:
\exists e\in\mathcal{E},
  \mathbf{a}=\psi(e)
  \right\}.
  $$ 
Hence, for an $\mathbf{a}\in\mathscr{A}$ it makes sense to write 
\begin{equation}\label{ct12}
  \psi_{1}^{-1}(\mathbf{a}):=\mathbf{v}\quad  \mbox{ and } \quad \mathbf{u}\in\psi_{2}^{-1}(\mathbf{a}),\quad \mbox{ if }
\psi(e)=\mathbf{a} \mbox{ and } e=(\mathbf{v},\mathbf{u}).
\end{equation}
Put
\begin{equation}\label{ct55}
  S_{\mathbf{a}}:=
  S_{a_1}\circ\cdots\circ S_{a_N} \mbox{ for }
 \psi(e)=\mathbf{a}=(a_1, \dots ,a_N) .
\end{equation}
By \eqref{cv34} for any $p\in [N]$ and $a_p=(v_p,i(e,p))$
\begin{eqnarray}\label{cr11}
  S_{a_p}|_{J_{v_p,i(e,p)}}\equiv S_{v_p,i(e,p)}|_{J_{v_p,i(e,p)}}\equiv f_{v_p}|_{J_{v_p,i(e,p)}} .
\end{eqnarray}
Then, using \eqref{cr11} and \eqref{ct21} together it is clear that
\begin{equation}\label{ct23}
 S_{\mathbf{a}}  |_{I_{\mathbf{u}}}
  \equiv
  f_{\mathbf{v}}|_{I _{\mathbf{u}}} ,\mbox{ for } 
  \mathbf{a}=\psi (e) \mbox{ with } e=(\mathbf{v},\mathbf{u})\in\mathcal{E}.
\end{equation}

With the help of alphabet $\mathscr{A}$ 
we define the \texttt{associated self-similar IFS}
\begin{equation}\label{ct51}
\mathscr{S}:=\mathscr{S}_{\mathcal{F}}:=
  \left\{
  S_{\mathbf{a}}
  \right\}_{\mathbf{a}\in\mathscr{A}}.
\end{equation}

The mapping $\psi:\mathcal{E}\to \mathscr{A}$, which we use to associate  the elements of $\mathscr{A}$ to edges, is onto but not necessarily $1-1$.

\bigskip

\textbf{The construction of the associated graph-directed self-similar IFS}
For an $e=(\mathbf{v},\mathbf{u})\in \mathcal{E}$
we consider $S_{\mathbf{a}}$ for $\mathbf{a}=(a_1, \dots ,a_N)=\psi(e)$ and define the mapping $F_e:I_{\mathbf{u}}\to I_{\mathbf{v}}$ by
\begin{equation}\label{ct17}
  F_e(x):=S_{\mathbf{a}}|_{I_{\mathbf{u}}}=f_{\mathbf{v}}|_{I_{\mathbf{u}}}.
\end{equation}
Moreover, for $\mathbf{e}=(e_1,\dots ,e_k)\in \mathcal{E}^k_{\mathbf{v}^1,\mathbf{v}^{k+1}}$ with $e_i=(\mathbf{v}^i,\mathbf{v}^{i+1}),\: i\in[k]$ we have
\begin{equation}\label{cr08}
    \forall x\in I_{\mathbf{v}^{k+1}}:\quad 
    F_{\mathbf{e}}(x):=F_{e_1}\circ\dots\circ F_{e_k}(x)=
    f_{\mathbf{v}^1}\circ\dots\circ f_{\mathbf{v}^k}(x).
\end{equation}
Obviously, $F_e$ is a similarity mapping restricted to $I_{\mathbf{u}}$ with contraction ratio
\begin{equation}\label{cs94}
  \rho_{\mathbf{v},\mathbf{u}}:=\rho_{e}:=
  \rho_{\mathbf{a}}:= S'_{\mathbf{a}} \equiv
  \rho_{a_1}\cdots\rho_{a_N}.
\end{equation}
Recall that we defined the directed full graph in \eqref{cr09}.
We call $\mathcal{F}^{\mathcal{G}}:=\left\{F_e\right\}_{e\in\mathcal{E}}$ the \texttt{associated self-similar graph-directed IFS}.
The symbolic space of $\mathcal{F}^{\mathcal{G}}$ is the set of infinite paths in the full graph $\mathcal{G}_{\mathcal{F}}$ that we denote by $\mathcal{E}_{\infty}$
\begin{eqnarray}\label{ct14}
   \nonumber\mathcal{E}_\infty    &:=& \left\{\mathfrak{p}:=  (e_1,e_2, \dots ):
  t(e_i)=s(e_{i+1}), \ e_i\in\mathcal{E} \mbox{ for all } i\in\mathbb{N}
  \right\} \\
     &=& \left\{\mathfrak{p}=(
  \underbrace{(\mathbf{v}^1,\mathbf{v}^2)}_{e_1},
  \underbrace{(\mathbf{v}^2,\mathbf{v}^3)}_{e_2},
  \underbrace{(\mathbf{v}^3,\mathbf{v}^4)}_{e_3}, \dots
  )
  :
  \mathbf{v}^k \in\mathcal{V},\quad \forall k\in\mathbb{N}
  \right\}.
  \end{eqnarray}

The attractor of $\mathcal{F}^{\mathcal{G}}$ is
\begin{equation}\label{ct19}
  \Lambda^{\mathcal{F}^{\mathcal{G}}}
  :=
  \bigcup_{\mathbf{v}\in\mathcal{V}}
  \bigcap_{k=1}^{\infty }
  \bigcup_{\mathbf{u}\in\mathcal{V}}
  \bigcup_{(e_1 \dots e_k)\in\mathcal{E}_{\mathbf{v},\mathbf{u}}^{k}}
  F_{e_1 \dots e_k}I_{\mathbf{u}}.
\end{equation}

Then by \eqref{cr08} we have
\begin{equation}\label{ct18}
  \Lambda^{\mathcal{F}}= \Lambda^{\mathcal{F}^{\mathcal{G}}}.
\end{equation}

Thus $\Lambda^{\mathcal{F}}$ can be represented as a graph-directed attractor. In the next section we show how $\Lambda^{\mathcal{F}}$ relates to the associated self-similar IFS $\mathscr{S}_{\mathcal{F}}$.

\subsection{Constructing a subshift in $\mathscr{A}^{\mathbb{N}}$}\label{cr33}
Now we show that we can consider $\Lambda^{\mathcal{F}}$ also as the projection of a subshift $\mathscr{A}^{\mathbb{N}}_{\rm Good}\subset \mathscr{A}^{\mathbb{N}}$ (defined below)  by $\Pi_{\mathscr{S}_{\mathcal{F}}}$, the natural projection corresponding to  $\mathscr{S}_{\mathcal{F}}$.
In order to construct $\mathscr{A}^{\mathbb{N}}_{\rm Good}$ we define two bijections: $\vartheta: \mathcal{V}^{\mathbb{N}}\to\mathcal{E}_{\infty}$ and $\Psi:\mathcal{E}_{\infty}\to\mathscr{A}^{\mathbb{N}}_{\rm Good}$.

Our first bijection $\vartheta$ is a very simple one:
\begin{equation}\label{ct09}
  \vartheta(\mathbf{v}^1,\mathbf{v}^2,\mathbf{v}^3,\mathbf{v}^4, \dots
):=
\left(
(\mathbf{v}^1,\mathbf{v}^2),
(\mathbf{v}^2,\mathbf{v}^3),
(\mathbf{v}^3,\mathbf{v}^4),
 \dots
\right).
\end{equation}
To define our second bijection, first
recall the definition of $\psi:\mathcal{E}\to \mathscr{A}$ from
\eqref{ct25}. Now we apply this componentwise to define the mapping $\Psi$ on $\mathcal{E}_\infty$
 by
\begin{equation}\label{ct13}
 \Psi(e_1,e_2, \dots )  =
\left(\mathbf{a}^1,\mathbf{a}^2, \dots \right)\in\mathscr{A}^{\mathbb{N}},\quad
\mathbf{a}^k: =\psi(e_k),\quad  k\in\mathbb{N}.
\end{equation}
Let $$\mathscr{A}_{\mathrm{Good}}^{\mathbb{N}}:=\Psi(\mathcal{E}_\infty ).$$
We claim that 
 $\Psi:\mathcal{E}_{\infty}\to\mathscr{A}^{\mathbb{N}}_{\rm Good} $ is a bijection. 
By definition $\psi(e_i)=\mathbf{a}^{i}$ for $i=1,\dots  ,k$.
By \eqref{cr23}, this  determines 
$e_1,\dots  ,e_{k-1}$ uniquely. Since this holds 
for all $k\in\mathbb{N}$ we obtain that $\Psi $ is $1-1$.
Clearly $\mathscr{A}_{\mathrm{Good}}^{\mathbb{N}}$ is compact and forward invariant. That is
\begin{equation}\label{cs99}
\sigma(\mathscr{A}_{\mathrm{Good}}^{\mathbb{N}}) \subset
 \mathscr{A}_{\mathrm{Good}}^{\mathbb{N}}\subset \mathscr{A}^{\mathbb{N}}.
\end{equation}
That is $\mathscr{A}_{\mathrm{Good}}^{\mathbb{N}}$ is a subshift 
in $\mathcal{A}^{\mathbb{N}}$.

As we defined earlier, let $\Pi_{\mathcal{F}^N}:\mathcal{V}^{\mathbb{N}}\to\mathbb{R}$ and  $\Pi_{\mathscr{S}_{\mathcal{F}}}:\mathscr{A}^{\mathbb{N}}\to
\mathbb{R}$ be  the natural projections corresponding to the  $N$-th iterate system of the CPLIFS $\mathcal{F}$ and the self-similar IFS $\mathscr{S}_{\mathcal{F}}$ respectively.
(For the definition of the natural projection of a general IFS see Section \ref{cv13}.)

Consider the following two diagrams:

\begin{equation}\label{ct06}
  \xymatrix{ \mathcal{V}^{\mathbb{N}} \ar[r]^\sigma  \ar[d]_\vartheta   &
                  \mathcal{V}^{\mathbb{N}}  \ar[d]^{\vartheta }\\ %
\mathcal{E}_\infty  \ar[r]_{\sigma} \ar[d]_\Psi  & \mathcal{E}_\infty  \ar[d]^\Psi \\%
\mathscr{A}_{\mathrm{Good}}^{\mathbb{N}}  \ar[r]_\sigma & \mathscr{A}_{\mathrm{Good}}^{\mathbb{N}} }
\quad
\xymatrix{\mathcal{V}^{\mathbb{N}} \ar[r]^{\Psi \circ \vartheta} \ar[rd]_{\Pi_{\mathcal{F}^N}}  & %
                 \mathscr{A}_{\mathrm{Good}}^{\mathbb{N}}  \ar[d]^{\Pi_{\mathscr{S}_{\mathcal{F}}} }\\ %
&\Lambda^{\mathcal{F}} }%
\end{equation}

It is obvious that the first diagram is commutative.
\begin{fact}\label{ct08}
The second diagram in \eqref{ct06} is commutative.
\end{fact}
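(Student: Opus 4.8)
The plan is to unwind the definitions of all the maps in the diagram and check that the two composites agree pointwise on an arbitrary element of $\mathcal{V}^{\mathbb{N}}$. Concretely, fix a sequence $(\mathbf{v}^1,\mathbf{v}^2,\dots)\in\mathcal{V}^{\mathbb{N}}$ with each $\mathbf{v}^k\in[m]^N$. Applying $\Psi\circ\vartheta$ gives the sequence $(\mathbf{a}^1,\mathbf{a}^2,\dots)\in\mathscr{A}^{\mathbb{N}}_{\rm Good}$ where $\mathbf{a}^k=\psi(e_k)$ and $e_k=(\mathbf{v}^k,\mathbf{v}^{k+1})$. I then need to show
\begin{equation}\label{eq:goal}
\Pi_{\mathscr{S}_{\mathcal{F}}}(\mathbf{a}^1,\mathbf{a}^2,\dots)=\Pi_{\mathcal{F}^N}(\mathbf{v}^1,\mathbf{v}^2,\dots).
\end{equation}

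The key step is to compare the two systems of cylinder intervals. By definition of the natural projection of the $N$-th iterate system, $\Pi_{\mathcal{F}^N}(\mathbf{v}^1,\mathbf{v}^2,\dots)$ is the unique point in $\bigcap_{k\ge 1} f_{\mathbf{v}^1}\circ\cdots\circ f_{\mathbf{v}^k}(I^{\mathcal{F}})$. On the other side, $\Pi_{\mathscr{S}_{\mathcal{F}}}(\mathbf{a}^1,\mathbf{a}^2,\dots)$ is, by \eqref{eq02} applied to the IFS $\mathscr{S}_{\mathcal{F}}=\{S_{\mathbf{a}}\}_{\mathbf{a}\in\mathscr{A}}$, the limit of $S_{\mathbf{a}^1}\circ\cdots\circ S_{\mathbf{a}^k}(0)$, equivalently the unique point in the nested intersection $\bigcap_{k\ge1} S_{\mathbf{a}^1}\circ\cdots\circ S_{\mathbf{a}^k}(B)$ for any fixed bounded set $B$. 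The bridge between the two is identity \eqref{ct23}: for each edge $e_k=(\mathbf{v}^k,\mathbf{v}^{k+1})$ we have $S_{\mathbf{a}^k}|_{I_{\mathbf{v}^{k+1}}}\equiv f_{\mathbf{v}^k}|_{I_{\mathbf{v}^{k+1}}}$, and more generally \eqref{cr08} gives $F_{e_1\cdots e_k}(x)=f_{\mathbf{v}^1}\circ\cdots\circ f_{\mathbf{v}^k}(x)$ for all $x\in I_{\mathbf{v}^{k+1}}$. So I would argue inductively that $S_{\mathbf{a}^1}\circ\cdots\circ S_{\mathbf{a}^k}(I_{\mathbf{v}^{k+1}})=f_{\mathbf{v}^1}\circ\cdots\circ f_{\mathbf{v}^k}(I_{\mathbf{v}^{k+1}})$; since $I_{\mathbf{v}^{k+1}}\subseteq I^{\mathcal{F}}$ and the right-hand sets are nested and shrink to the single point $\Pi_{\mathcal{F}^N}(\mathbf{v}^1,\dots)$, the same holds for the left-hand sets, and taking the limit yields \eqref{eq:goal}.

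The one point requiring a little care — and the main (mild) obstacle — is the compatibility of domains along the composition: $S_{\mathbf{a}^k}$ agrees with $f_{\mathbf{v}^k}$ only on $I_{\mathbf{v}^{k+1}}$, not globally, so before composing I must verify that $S_{\mathbf{a}^{k+1}}\circ\cdots\circ S_{\mathbf{a}^j}(I_{\mathbf{v}^{j+1}})$ actually lands inside $I_{\mathbf{v}^{k+1}}$, so that the substitution \eqref{ct23} is legitimate at each stage. This follows by a downward induction on $k$ using the inclusion $f_{\mathbf{v}^{k+1}}(I^{\mathcal{F}})=I_{\mathbf{v}^{k+1}}\subseteq I^{\mathcal{F}}$ together with \eqref{cr08}; it is exactly the mechanism already recorded in the definition \eqref{ct19} of $\Lambda^{\mathcal{F}^{\mathcal{G}}}$ and the identity \eqref{ct18}. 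Once this domain bookkeeping is in place, the commutativity is immediate, so I would present the argument as: (1) recall the two projections as nested intersections; (2) invoke \eqref{cr08}/\eqref{ct23} to identify the two cylinder families; (3) conclude by uniqueness of the point in the intersection. No deep ingredient is needed — it is purely a matter of threading the partially-defined maps through the composition correctly.
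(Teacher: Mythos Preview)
Your proposal is correct and follows essentially the same line as the paper: the paper also reduces everything to the identities \eqref{ct23}/\eqref{ct17} and \eqref{cr08}, though it packages the argument slightly differently by introducing the natural projection $\Pi_{\mathcal{F}^{\mathcal{G}}}$ of the associated GDIFS and factoring the triangle into two smaller commutative triangles through $\mathcal{E}_\infty$. Your direct nested-intersection argument with the explicit domain bookkeeping amounts to the same verification.
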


\begin{proof}
  Write $\Pi_{\mathcal{F}^{\mathcal{G}}}$ for the natural projection defined by the associated self-similar GDIFS $\mathcal{F}^{\mathcal{G}}$. 
  Since $\Lambda^{\mathcal{F}}=\Lambda^{\mathcal{F}^{\mathcal{G}}}$
  we may dissect the diagram in the following way:
  \begin{equation}\label{cr10}
    \xymatrix{\mathcal{V}^{\mathbb{N}} \ar[r]^{\vartheta} \ar[rd]_{\Pi_{\mathcal{F}^N}}  & %
                 \mathcal{E}_{\infty}  \ar[d]^{\Pi_{\mathcal{F}^{\mathcal{G}}} }\\ %
&\Lambda^{\mathcal{F}} }
\quad
\xymatrix{\mathcal{E}_{\infty} \ar[r]^{\Psi } \ar[d]_{\Pi_{\mathcal{F}^{\mathcal{G}}}} & %
                 \mathscr{A}_{\mathrm{Good}}^{\mathbb{N}}  \ar[ld]^{\Pi_{\mathscr{S}_{\mathcal{F}}} }\\ %
\Lambda^{\mathcal{F}^{\mathcal{G}}} }%
  \end{equation}
We already showed in \eqref{ct18} that the left diagram in \eqref{cr10} is commutative. Using \eqref{ct17} and the definition of $\mathscr{A}_{\mathrm{Good}}^{\mathbb{N}}$ it is easy to see that the right diagram is also commutative.

\end{proof}

In particular we obtained that
\begin{equation}\label{ct01}
\Lambda^{\mathcal{F}}=\Lambda^{\mathcal{F}^N}
=
 \Pi_{\mathcal{F}^N}(\mathcal{V}^{\mathbb{N}})
=
 \Pi_{\mathscr{S}_{\mathcal{F}}}
\left(
 \mathscr{A}_{\mathrm{Good}}^{\mathbb{N}}
\right).
\end{equation}

\subsection{An ergodic measure on $\Lambda^{\mathscr{S}_{\mathcal{F}}}$ supported by $\Lambda^{\mathcal{F}}$}\label{cs92}
Our aim is to define an appropriate invariant ergodic measure $\mathfrak{m}$  on $\mathscr{A}^{\mathbb{N}}$ which is supported by $\mathscr{A}^{\mathbb{N}}_{\mathrm{Good}}$.
Then we will take its push-forward measure by $\Pi_{\mathscr{S}_{\mathcal{F}}}$. This way, according to \eqref{ct01}, the push-forward measure $\Pi_{\mathscr{S}_{\mathcal{F}}*}\mathfrak{m}$ is supported by $\Lambda^{\mathcal{F}}$. 

For every $\beta \geq 0$ we define the $m^N\times m^N$ matrix $C^{(\beta)}$ just like we did in Subsection \ref{ssec:22}.
First we order the elements of $\mathcal{V}$ according to lexicographical order. This will be the order of the rows and columns of $C^{(\beta)}$. Then set
\begin{equation}\label{cs96}
C^{(\beta)}:=
(|\rho_{\mathbf{v},\mathbf{u}}|^{\beta})_
{(\mathbf{v},\mathbf{u})\in\mathcal{V}\times \mathcal{V}},
\end{equation}
where $\rho_{\mathbf{v},\mathbf{u}}$ was defined in \eqref{cs94}.
According to Definition \ref{cv65}, the number $\alpha =\alpha (\mathcal{F})$ is uniquely defined by $\varrho\left( C^{(\alpha)}\right)=1$.

Since $C^{(\alpha)}$ is an irreducible matrix, both the left and the right
eigenvectors
$\underline{\pmb{\mathfrak{u}}}
=(\mathfrak{u}_{\mathbf{v}})_{\mathbf{v}\in\mathcal{V}}$,
$\underline{\pmb{\upnu}}=(\upnu_{\mathbf{v}})_{\mathbf{v}\in\mathcal{V}}$
corresponding to  eigenvalue $1$ can be choosen to have all positive components.
That is
\begin{equation}\label{cs89}
   \sum\limits_{\mathbf{v}\in\mathcal{V}}
\mathfrak{u}_{\mathbf{v}} \cdot |\rho_{\mathbf{v},\mathbf{u}}|^{\alpha}
=
\mathfrak{u}_{\mathbf{u}},\quad
  \sum\limits_{\mathbf{u}\in\mathcal{V}}
|\rho_{\mathbf{v},\mathbf{u}}|^{\alpha}
\upnu_{\mathbf{u}}=
\upnu_{\mathbf{v}},\quad
\mathfrak{u}_{\mathbf{v}},\upnu_{\mathbf{v}}>0
\mbox{ for all }
\mathbf{v}\in\mathcal{V}.
\end{equation}
We normalize them in such a way that
\begin{equation}\label{cs90}
  \sum\limits_{\mathbf{v}\in\mathcal{V}}
\upnu_{\mathbf{v}}=1,\quad
 \sum\limits_{\mathbf{v}\in\mathcal{V}}
\mathfrak{u}_{\mathbf{v}} \cdot
\upnu_{\mathbf{v}}
=1.
\end{equation}
Now we define the stochastic matrix $P=(p_{\mathbf{v},\mathbf{u}})
_{\mathbf{v},\mathbf{u}\in\mathcal{V}}$
and its stationary distribution $\mathbf{p}=(p_{\mathbf{v}})_{\mathbf{v}\in\mathcal{V}}$, which corresponds to the matrix $C^{(\alpha)}$.
That is
\begin{equation}\label{cs88}
  p_{\mathbf{v},\mathbf{u}}:=
\frac{|\rho_{\mathbf{v},\mathbf{u}}|^\alpha\upnu_{\mathbf{u}}}
{\upnu_{\mathbf{v}}},\qquad
p_{\mathbf{v}}:=(\upnu_{\mathbf{v}} \cdot \mathfrak{u}_{\mathbf{v}}),\quad
\mathbf{v},\mathbf{u}\in\mathcal{V}.
\end{equation}
Clearly,
\begin{equation}\label{cs87}
 \mathbf{p}^T \cdot P =\mathbf{p}^T \mbox{ that is }
\sum\limits_{\mathbf{v}}
\upnu_{\mathbf{v}}  \mathfrak{u}_{\mathbf{v}}
 \cdot
\frac{|\rho_{\mathbf{v},\mathbf{u}}|^\alpha\upnu_{\mathbf{u}}}
{\upnu_{\mathbf{v}}}
=
\upnu_{\mathbf{u}}\mathfrak{u}_{\mathbf{u}}.
\end{equation}
\subsubsection{Auxiliary measures}
Now we consider the one-sided Markov shift on $\mathcal{V}^{\mathbb{N}}$ (see \cite[p. 22]{walters2000introduction})
corresponding to $(\mathbf{p},P)$. This gives us the ergodic Borel measure
$\mu=\mu_{\mathcal{F}}$ on
$\mathcal{V}^{\mathbb{N}}$ defined on the $n$-cylinders
$\left[
\mathbf{v}^1, \dots ,\mathbf{v}^n
\right] \subset \mathcal{V}^n$ by
\begin{equation}\label{cs86}
  \mu\left(\left[
\mathbf{v}^1, \dots ,\mathbf{v}^n
\right]\right):=
p_{\mathbf{v}^1} \cdot p_{\mathbf{v}^1,\mathbf{v}^2}
p_{\mathbf{v}^2,\mathbf{v}^3}\cdots
p_{\mathbf{v}^{n-1},\mathbf{v}^n}.
\end{equation}
Then this extends to an ergodic measure on $\mathcal{V}^{\mathbb{N}}$ (see \cite[Theorem 1.19]{walters2000introduction}).
Using that $\Psi\circ\vartheta:\mathcal{V}^{\mathbb{N}}\to  \mathscr{A}^{\mathbb{N}}_{\mathrm{Good}}$ is a homeomorphism,
and the first diagram in \eqref{ct06} is commutative,
we get that the measure
\begin{equation}\label{cs85}
  \nu:=
\left(\Psi\circ\vartheta\right)_*(\mu)
\end{equation}
is an invariant and ergodic measure on $(\mathscr{A}^{\mathbb{N}}_{\mathrm{Good}},\sigma)$. We have pointed out that 
$\mathscr{A}^{\mathbb{N}}_{\mathrm{Good}}$ is subshift in 
$\mathscr{A}^{\mathbb{N}}$. So we can extend $\nu$
from $\mathscr{A}^{\mathbb{N}}_{\mathrm{Good}}$  to
$\mathscr{A}^{\mathbb{N}}$ in a obvious way, such that after extension we still have an ergodic invariant measure. Namely,
we define the measure $\mathfrak{m}$ on $\mathscr{A}^{\mathbb{N}}$
such that for a Borel set $H \subset \mathscr{A}^{\mathbb{N}}$ 
\begin{equation}\label{cs84}
  \mathfrak{m}(H):=\nu(H\cap \mathscr{A}^{\mathbb{N}}_{\mathrm{Good}}).
\end{equation}
Using that $\mathscr{A}^{\mathbb{N}}_{\mathrm{Good}}$ is compact we obtain that the support $\mathrm{spt}(\mathfrak{m})=
\mathscr{A}^{\mathbb{N}}_{\mathrm{Good}}$. Moreover, it follows from \eqref{cs99} and \cite[Theorem 1.6]{walters2000introduction}
that $\mathfrak{m}$ is an ergodic measure. The invariance of $\mathfrak{m}$ is obvious from the definition.

\subsubsection{The entropy and Lyapunov exponent of $\mathfrak{m}$}\label{cs83}
First we estimate the measure of an $n$-cylinder for an arbitrary
$n\in\mathbb{N}$.
For an  $\mathfrak{a}:=(\mathbf{a}^1,\mathbf{a}^2, \dots ,\mathbf{a}^n, \dots )\in\mathscr{A}^{\mathbb{N}}$
we write
 $
\mathfrak{a}|_n:=
(\mathbf{a}^1,\mathbf{a}^2, \dots ,\mathbf{a}^n )
$.
Then
$$
[\mathfrak{a}|_n]=
\left\{
\widehat{\mathfrak{a}}=(\widehat{\mathbf{a}}^1,
\widehat{\mathbf{a}}^2, \dots
)\in\mathscr{A}^{\mathbb{N}}:
\widehat{\mathbf{a}}^1=\mathbf{a}^1, \dots ,
\widehat{\mathbf{a}}^n=\mathbf{a}^n
\right\}.
$$

For every $n$ and $\mathbf{v}^{n+1}\in\Psi_{2}^{-1}(\mathbf{a}^n)$ we have
\begin{equation}\label{cs79}
  \left[
  \mathbf{v}^1, \dots ,\mathbf{v}^n,\mathbf{v}^{n+1}
  \right] \subset
\left(\Psi\circ\vartheta\right)^{-1} \left( [\mathfrak{a}|_n]\right)
 \subset    \left[
  \mathbf{v}^1, \dots ,\mathbf{v}^n
  \right]
\end{equation}
Hence,
\begin{equation}\label{cs78}
  \mu\left(\left[
  \mathbf{v}^1, \dots ,\mathbf{v}^n,\mathbf{v}^{n+1}
  \right]\right)
   \leq
  \nu\left(\left[
  \mathbf{a}^1, \dots ,\mathbf{a}^n
  \right]\right)
   \leq
   \mu\left(\left[
  \mathbf{v}^1, \dots ,\mathbf{v}^n
  \right]\right)
\end{equation}
By substituting the formulas given in \eqref{cs88} for $p_{\mathbf{v},\mathbf{u}}$ and $p_{\mathbf{v}}$ into the formula \eqref{cs86} we obtain that there exist $C_1,C_2>0$ such that
\begin{equation}\label{cs09}
  C_1 \leq
  \frac{\nu\left(\left[
  \mathbf{a}^1, \dots ,\mathbf{a}^n
  \right]\right)}{|\rho_{\mathbf{a}^1, \dots ,\mathbf{a}^n}|^{\alpha}}
   \leq C_2.
\end{equation}

By definition (see \eqref{cv89}) the Lyapunov exponent of $\mathfrak{m}$
is
\begin{equation}\label{cs82}
  \chi_{\mathfrak{m}}:
  =
  - \sum\limits_{\mathbf{a}\in\mathscr{A}}
\mathfrak{m}([\mathbf{a}]) \cdot
\log |\rho_{\mathbf{a}}|,
\end{equation}
where we defined $\rho_{\mathbf{a}}$ in \eqref{cs94}.
Then by the ergodicity of $\mathfrak{m}$ and by the Shannon-McMillan-Breiman Theorem we have
\begin{equation}\label{cs81}
  h(\mathfrak{m})=
  -\lim\limits_{n\to\infty}
  \frac{1}{n}
  \log\mathfrak{m}
  (
  [\mathfrak{a}|_n]
  )
  \mbox{ for $\mathfrak{m}$-a.a. }\mathfrak{a}\in\mathscr{A}^{\mathbb{N}}.
\end{equation}
Moreover,
\begin{equation}\label{cs80}
   \chi_{\mathfrak{m}}:
  =
  -\lim\limits_{n\to\infty}
  \frac{1}{n}\log
  |S'_{\mathfrak{a}|_n}|
  = -\lim\limits_{n\to\infty}
  \frac{1}{n}\log
  |\rho_{\mathfrak{a}|_n}|
  \mbox{ for $\mathfrak{m}$-a.a. }\mathfrak{a}\in\mathscr{A}^{\mathbb{N}}.
\end{equation}
Putting together \eqref{cs80},\eqref{cs81} and \eqref{cs78} we get
\begin{equation}\label{cs77}
  \frac{h(\mathfrak{m})}{\chi_{\mathfrak{m}}}=\alpha.
\end{equation}

\section{The proof of Theorems \ref{cs74}, \ref{ct48} assuming the Main Proposition}\label{cr60}

This section contains the proofs of our main results, under the assumption that the Main Proposition holds. First we prove some facts and a lemma.

\begin{lemma}\label{cs76}
Let $\mathcal{F}$ be a regular CPLIFS. Then we have
  \begin{equation}\label{cs75}
 \alpha(\mathcal{F}) =  s_{\mathcal{F}}.
  \end{equation}
\end{lemma}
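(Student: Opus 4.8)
The plan is to show that both $\alpha(\mathcal{F})$ and $s_{\mathcal{F}}$ are characterised as the unique zero of an exponential growth rate attached to the \emph{same} family of covers of $\Lambda^{\mathcal{F}}$, namely the $N$-fold cylinder intervals $\{I^{\mathcal{F}}_{\mathbf{u}}\}_{\mathbf{u}\in[m]^{nN}}$, so that the two numbers must coincide. Concretely, I would first recall that $s_{\mathcal{F}}$ is the zero of $\Phi^{\mathcal{F}}(s)=\limsup_n \frac1n\log\sum_{\mathbf{i}\in[m]^n}|I^{\mathcal{F}}_{\mathbf{i}}|^s$, and observe that passing to the subsequence $n\mapsto nN$ does not change the zero (the growth rate along this subsequence is $N$ times the full one, hence has the same sign pattern). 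So it suffices to analyse $\sum_{\mathbf{v}\in\mathcal{V}^{n}}|I^{\mathcal{F}}_{\mathbf{v}}|^s$ where $\mathcal{V}=[m]^N$, i.e.\ where each block of $N$ symbols is treated as a single letter.

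The next step is to connect the lengths $|I^{\mathcal{F}}_{\mathbf{v}^1\cdots\mathbf{v}^n}|$ to the contraction ratios $\rho_{\mathbf{v},\mathbf{u}}$ of the associated graph-directed system. By \eqref{cr08} and \eqref{ct17}, for a path $\mathbf{e}=(e_1,\dots,e_n)\in\mathcal{E}^n_{\mathbf{v}^1,\mathbf{v}^{n+1}}$ with $e_i=(\mathbf{v}^i,\mathbf{v}^{i+1})$ we have $F_{\mathbf{e}}=f_{\mathbf{v}^1}\circ\cdots\circ f_{\mathbf{v}^n}$ on $I_{\mathbf{v}^{n+1}}$, and since $\mathcal{F}$ is regular each $f_{\mathbf{v}^i}$ is \emph{affine} on the relevant cylinder, so $|I^{\mathcal{F}}_{\mathbf{v}^1\cdots\mathbf{v}^{n+1}}| = |\rho_{\mathbf{v}^1,\mathbf{v}^2}\cdots\rho_{\mathbf{v}^n,\mathbf{v}^{n+1}}|\cdot|I_{\mathbf{v}^{n+1}}|$ (using that $\rho_{\mathbf{v},\mathbf{u}}=\rho_{\psi(e)}$ depends only on $e=(\mathbf{v},\mathbf{u})$, and that there is a unique edge between any ordered pair of vertices since $\mathcal{G}_{\mathcal{F}}$ is the full graph). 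Here I must be slightly careful: $|I^{\mathcal{F}}_{\mathbf{v}^1}|$ for a first-level cylinder of the $N$-th iterate is $|I^{\mathcal{F}}_{u_1\cdots u_N}|$, which is bounded above and below by constants times $\rho_{u_1}\cdots\rho_{u_N}$ up to the factor $|I^{\mathcal{F}}|$ — so I should state the identity for cylinders of level $\ge 2$ in the $N$-block alphabet and absorb the initial and terminal $|I_\cdot|$ factors into multiplicative constants $C_1,C_2$ independent of $n$, exactly as in \eqref{cs09}. Since $\mathcal{V}$ is finite, $\min_{\mathbf{u}}|I_{\mathbf{u}}|$ and $\max_{\mathbf{u}}|I_{\mathbf{u}}|$ are positive constants, so these factors are harmless for the exponential rate.

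With this in hand, $\sum_{\mathbf{v}^1,\dots,\mathbf{v}^{n+1}}|I^{\mathcal{F}}_{\mathbf{v}^1\cdots\mathbf{v}^{n+1}}|^s$ is comparable (up to constants independent of $n$) to $\sum_{\mathbf{v}^1,\dots,\mathbf{v}^{n+1}}\prod_{i=1}^{n}|\rho_{\mathbf{v}^i,\mathbf{v}^{i+1}}|^s = \mathbf{1}^T (C^{(s)})^{n}\mathbf{1}$, where $C^{(s)}$ is the matrix from \eqref{ct50}/\eqref{cs96}. Since $C^{(s)}$ is a nonnegative irreducible matrix (the full graph on $\mathcal{V}$ is strongly connected and every entry is positive, being a single positive term $|\rho_{\mathbf{v},\mathbf{u}}|^s$), Perron–Frobenius gives $\lim_n \frac1n\log\big(\mathbf{1}^T(C^{(s)})^n\mathbf{1}\big)=\log\varrho(C^{(s)})$, uniformly enough that the $\limsup$ in $\Phi$ is a genuine limit along the subsequence $nN$. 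Therefore
\[
N\,\Phi^{\mathcal{F}}(s) \;=\; \log\varrho(C^{(s)}).
\]
Consequently $\Phi^{\mathcal{F}}(s)=0 \iff \varrho(C^{(s)})=1$; by \eqref{cr61} the left condition characterises $s=s_{\mathcal{F}}$ uniquely (Barreira's Theorem 1.9, already cited) and by Definition \ref{cv65} the right condition characterises $s=\alpha(\mathcal{F})$ uniquely (Mauldin–Williams monotonicity, already cited). Hence $\alpha(\mathcal{F})=s_{\mathcal{F}}$, as claimed.

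\textbf{Main obstacle.} The only genuinely delicate point is the first reduction combined with bookkeeping: one must (i) justify that regularity makes $f_{\mathbf{v}}$ honestly affine on each $N$-cylinder $I_{\mathbf{u}}$ so that lengths multiply \emph{exactly} rather than merely up to distortion (this is where Definition \ref{cr53} and the order-$N$ property are essential), and (ii) confirm that restricting the $\limsup$ defining $\Phi^{\mathcal{F}}$ to multiples of $N$ and to the coarser $N$-block cylinder structure does not change its zero. Point (ii) is routine because $\sum_{\mathbf{i}\in[m]^{nN}}|I^{\mathcal{F}}_{\mathbf{i}}|^s$ is a subsequence of $\sum_{\mathbf{i}\in[m]^{k}}|I^{\mathcal{F}}_{\mathbf{i}}|^s$ and the intermediate terms are squeezed between consecutive ones up to the bounded factors $\rho_{\min}^{-Ns},\rho_{\max}^{-Ns}$ (and $|I^{\mathcal{F}}|^s$); point (i) is exactly what the construction in Section \ref{cs63} was set up to guarantee. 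Everything else is Perron–Frobenius and the already-quoted uniqueness statements.
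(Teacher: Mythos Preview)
Your proposal is correct and follows essentially the same route as the paper: both reduce $\Phi^{\mathcal{F}}(s)$ to the exponential growth rate of $\sum_{\mathbf{v}^1,\dots,\mathbf{v}^k}\prod_i|\rho_{\mathbf{v}^i,\mathbf{v}^{i+1}}|^s$ along the $N$-block subsequence (the paper does this via bounded distortion and Fekete's lemma in Fact~\ref{cr88}, you via the same squeeze plus Perron--Frobenius). The only cosmetic difference is the last step: you identify the growth rate directly as $\tfrac1N\log\varrho(C^{(s)})$ via Perron--Frobenius, whereas the paper substitutes $s=\alpha$ and uses the Markov measure $\mu$ of Section~\ref{cs92} (whose cylinder masses are comparable to $\prod|\rho|^{\alpha}$, see \eqref{cr98}) together with $\sum\mu[\cdot]=1$ to conclude $\Phi(\alpha)=0$ --- since $\mu$ is built from the Perron--Frobenius eigenvectors of $C^{(\alpha)}$, these are two packagings of the same computation.
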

To prove this lemma we will use the following facts.

\begin{definition}\label{cr95}
Let $\mathcal{F}$ be a CPLIFS. We say that $\mathcal{F}$ satisfies the \texttt{Bounded Distortion Property (BDP)} if there exist $0<C_1<C_2$ such that
\begin{equation}\label{cr94}
\forall n,\; \forall \vert \mathbf{j}\vert =n,\; \forall x,y\in I :\quad C_1\leq\frac{|f_{\mathbf{j}}^{\prime}(x)|}{|f_{\mathbf{j}}^{'}(y)|}\leq C_2.
\end{equation}
\end{definition}
It is not true in general that a CPLIFS satisfies the BDP. For example, if a function's fixed point coincides with one of its breaking points, then this condition trivially fails. However, in the regular case the BDP does hold.

\begin{lemma}\label{cr93}
Let $\mathcal{F}$ be a regular CPLIFS of order $N$. Then $\mathcal{F}$ satisfies the bounded distortion property.
\end{lemma}

\begin{proof}\label{cr92}
As there are finitely many words of length at most $N$, the statement trivially holds for $n<N$.
Let $\mathbf{j}=j_1\dots j_n$ with $n>N$. Let $x,y\in I$ be arbitrary numbers.
First we investigate the derivative of $f_{\mathbf{j}}$ at a given $x\in I$.
\begin{equation}\label{cr91}
f_{j_1,\dots ,j_n}^{'}(x)=f_{j_1}^{'}(f_{\sigma \mathbf{j}}(x))\cdots
f_{j_{n-N}}^{'}(f_{\sigma^{n-N} \mathbf{j}}(x))
f_{\sigma^{n-N} \mathbf{j}}^{'}(x).
\end{equation}
We said that $N$ is the order of our CPLIFS. It means that the functions of $\mathcal{F}$ have constant slopes over all the $N$ cylinders. Hence $f_k^{'}(\widehat{x})=f_k^{'}(\widehat{y})$ if $\widehat{x},\widehat{y}$ are elements of the same cylinder interval of level $N$.

For $1\leq l\leq n-N$ the words $\sigma^l \mathbf{j}$ has length at least $N$. It implies that $f_{\sigma^l \mathbf{j}}(I)$ is a subset of a cylinder interval of level $N$. Thus $f_{\sigma^l \mathbf{j}}(x)$ and $f_{\sigma^l \mathbf{j}}(y)$ are contained in the same cylinder interval of level $N$ for $1\leq l\leq n-N$.

We conclude that for $1\leq l\leq n-N$
\begin{equation}\label{cr90}
f_{k}^{'}(f_{\sigma^l \mathbf{j}}(x))=f_{k}^{'}(f_{\sigma^l \mathbf{j}}(y)),
\end{equation}
for any $k\in [m]$.

After substituting \eqref{cr90} into \eqref{cr91} we obtain that
\begin{equation}\label{cr89}
C_1:=\frac{\rho_{\min}^N}{\rho_{\max}} \leq
\frac{|f_{j_1,\dots ,j_n}^{'}(x)|}{|f_{j_1,\dots ,j_n}^{'}(y)|}=
\frac{|f_{\sigma^{n-N} \mathbf{j}}^{'}(x)|}{|f_{\sigma^{n-N} \mathbf{j}}^{'}(x)|}
\leq \frac{\rho_{\max}}{\rho_{\min}^N}=:C_2,
\end{equation}
where $\rho_{min}$ and $\rho_{max}$ were defined in \eqref{cr52}.
\end{proof}

If we assume Proposition\ref{ct46}, we can easily proof Corollary \ref{cr31} with the help of this Fact, and the following theorem.

\begin{theorem}[Theorem 3.2 from \cite{falconer1997techniques}] \label{cr05}
    Let $E$ be a non-empty compact subset of $\mathbb{R}^n$ and let $a>0$ and $r_0>0$. Suppose that for every closed ball $B$ with center in $E$ and radius $r<r_0$ there is a mapping $: E\to E\cap B$ satisfying
    \begin{equation}\label{cr06}
        ar\vert x-y\vert \leq \vert g(x)-g(y)\vert 
        \mbox{ for all } x,y\in E.
    \end{equation} 
    Then, writing $s=\dim_H E$, we have that $\mathcal{H}^s(E)\leq 
    4^s a^{-s}<\infty$ and $\underline{\dim}_B(E) = \overline{\dim}_B(E)=s$.
\end{theorem}

\begin{proof}[Proof of Corollary \ref{cr32} assuming Proposition \ref{ct46}]
According to Proposition \ref{ct46}, it is enough to prove our statement for a small regular CPLIFS $\mathcal{F}=\{ f_i\}_{i=1}^m$.

We proceed by applying Theorem \ref{cr05} to $\Lambda^{\mathcal{F}}$. Without loss of generality we assume that $|\Lambda^{\mathcal{F}}| =1$.
Let $r_0:=1$ and $a=\frac{\rho_{min}}{2C_2}$, where $C_2=\frac{\rho_{\max}}{\rho^N_{\min}}$.
Fix an arbitrary $0<r<r_0$ and $x\in\Lambda^{\mathcal{F}}$, and consider the interval $D= [x-r,x+r]$. We choose $(i_1,\dots ,i_n)\in [m]^n$ such that $x\in \Lambda_{i_1,\dots ,i_n}\subset D$ but $\Lambda_{i_1,\dots ,i_{n-1}}\not\subset D$. Then $\frac{r}{2}<|\Lambda_{i_1,\dots ,i_{n-1}}|$.

Let $I^{\mathcal{F}}$ be the interval we defined in \eqref{cr22}. There exists $x_1\in I^{\mathcal{F}}$ for which
\begin{equation}\label{cr30}
\frac{r}{2}<|\Lambda_{i_1,\dots ,i_{n-1}}|\leq
|f_{i_1,\dots ,i_{n-1}}^{'}(x_1)| .
\end{equation}

According to Lemma \ref{cr93} $\mathcal{F}$ satisfies the bounded distortion property. Namely, \eqref{cr89} implies that for every $x_2\in I^{\mathcal{F}}$ we have
\begin{equation}\label{cr29}
|f_{i_1,\dots ,i_{n-1}}^{'}(x_1)|\leq C_2|f_{i_1,\dots ,i_{n-1}}^{'}(f_n(x_2))|\leq
C_2 \underbrace{|f_{i_1,\dots ,i_{n-1}}^{'}(f_{i_n}(x_2))| \cdot |f_{i_n}^{'}(x_2)|}_{|f^{'}_{i_1,\dots ,i_n}(x_2)|}\frac{1}{\rho_{\min}},
\end{equation}
where in the last inequality we used that $\rho_{\min}$ is the smallest slope in the system in absolute value.
Together \eqref{cr30} and \eqref{cr29} gives that for all $x_2\in I^{\mathcal{F}}$
\begin{equation}\label{cr27}
|f^{'}_{i_1,\dots ,i_n}(x_2)|>\frac{\rho_{\min}}{2C_2}r=a\cdot r.
\end{equation}
All the functions in $\mathcal{F}$ are continuous and piecewise linear, hence for any $y,z\in\Lambda^{\mathcal{F}},\: y<z$ there exists a $\xi\in (y,z)$ such that
\begin{equation}\label{cr28}
|f_{i_1,\dots ,i_n}(z)-f_{i_1,\dots ,i_n}(y)| \geq
|f_{i_1,\dots ,i_n}^{'}(\xi )|\cdot (z-y)\geq a\cdot r(z-y),
\end{equation}
where we used that \eqref{cr27} applies for any element of $I^{\mathcal{F}}$.

Since $f_{i_1,\dots ,i_n}\Lambda^{\mathcal{F}} \subset \Lambda^{\mathcal{F}}\cap D$, all conditions of Theorem \ref{cr05} hold which completes the proof of our theorem.

\end{proof}

\begin{fact}\label{cr88}
Let $\mathcal{F}$ be a regular CPLIFS.
Then
\begin{equation}\label{cr87}
\Phi(s)=\lim\limits_{n\to\infty} \frac{1}{n}\log\sum\limits_{i_1, \dots ,i_n} |I_{i_1 \dots i_n}|^s.
\end{equation}
That is we can write $\lim $ instead of $\limsup$ in \eqref{cr64}.
\end{fact}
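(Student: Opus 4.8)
The plan is to show that the sequence $a_n := \log \sum_{i_1,\dots,i_n}|I_{i_1\dots i_n}|^s$ is \emph{almost superadditive} up to a bounded error, which together with the obvious \emph{almost subadditivity} will force the limit to exist and equal the $\limsup$. The key structural input is the Bounded Distortion Property established in Fact \ref{cr93}: for a regular CPLIFS of order $N$ there are constants $0<C_1<C_2$ with $C_1\le |f_{\mathbf j}'(x)|/|f_{\mathbf j}'(y)|\le C_2$ for all words $\mathbf j$ and all $x,y\in I^{\mathcal F}$. Since $\mathcal F$ is regular, its attractor avoids the breaking points, hence $|I_{i_1\dots i_n}^{\mathcal F}|$ is comparable (with constants independent of $n$ and of the word) to $|f_{i_1\dots i_n}'(\xi)|$ for any $\xi\in I^{\mathcal F}$; more precisely, because $f_{\mathbf j}$ is piecewise linear and continuous one has $|I_{\mathbf j}^{\mathcal F}| = |f_{\mathbf j}(I^{\mathcal F})|$ bounded above and below by constant multiples of $\sup_{x}|f_{\mathbf j}'(x)|$ and $\inf_x |f_{\mathbf j}'(x)|$, and BDP makes these two comparable. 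Write this comparability as $D_1 \le |I_{\mathbf j}^{\mathcal F}| / \bigl(|I^{\mathcal F}|\cdot|f_{\mathbf j}'(\xi)|\bigr) \le D_2$ for fixed $D_1,D_2>0$.

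Next I would use the chain rule in the form already exploited in \eqref{cr91}: for a concatenation $\mathbf j = \mathbf u\mathbf v$ with $|\mathbf u| = p$, $|\mathbf v| = q$, one has $f_{\mathbf u\mathbf v}'(x) = f_{\mathbf u}'(f_{\mathbf v}(x))\cdot f_{\mathbf v}'(x)$, and since $f_{\mathbf v}(x)\in I^{\mathcal F}$ the first factor is comparable to $|I_{\mathbf u}^{\mathcal F}|/|I^{\mathcal F}|$. Raising to the power $s$ and summing over all $\mathbf u\in[m]^p$ and $\mathbf v\in[m]^q$ gives constants $E_1,E_2>0$ (depending only on $s$, $D_1$, $D_2$, $C_1$, $C_2$) with
\begin{equation}\label{cr88a}
E_1 \le \frac{\sum_{|\mathbf j|=p+q}|I_{\mathbf j}^{\mathcal F}|^s}{\bigl(\sum_{|\mathbf u|=p}|I_{\mathbf u}^{\mathcal F}|^s\bigr)\bigl(\sum_{|\mathbf v|=q}|I_{\mathbf v}^{\mathcal F}|^s\bigr)} \le E_2.
\end{equation}
In other words, setting $a_n = \log S_n^s$ in the notation of \eqref{cr19}, we get $a_{p+q} \le a_p + a_q + \log E_2$ and $a_{p+q}\ge a_p+a_q+\log E_1$. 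The first inequality says $(a_n - \log E_2)$ is subadditive, so by Fekete's lemma $\lim_n a_n/n$ exists in $[-\infty,\infty)$ and equals $\inf_n (a_n-\log E_2)/n$; the second inequality shows this limit is finite. Since the limit exists, it coincides with the $\limsup$ appearing in the definition \eqref{cr64} of $\Phi^{\mathcal F}(s)$, which is exactly \eqref{cr87}.

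The main obstacle is establishing the comparability \eqref{cr88a} with constants uniform in $p$, $q$ and in the words — this is where regularity is essential and where a non-regular CPLIFS would fail, since then a cylinder interval could collapse to a point or shrink at an uncontrolled rate relative to $|f_{\mathbf j}'|$ near a breaking point. Concretely, the delicate point is to pass from ``$f_{\mathbf j}$ has constant slope on each $N$-cylinder'' (Fact \ref{cr93}) to a genuine two-sided bound on $|I_{\mathbf j}^{\mathcal F}|$ in terms of the derivative; for words of length $<N$ this is handled by finiteness, and for longer words one peels off the last $N$ symbols exactly as in the proof of Fact \ref{cr93}, using that $f_{\sigma^\ell\mathbf j}(I^{\mathcal F})$ lies in a single $N$-cylinder for $1\le\ell\le n-N$. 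Once this bookkeeping is done, everything reduces to Fekete's subadditivity lemma.
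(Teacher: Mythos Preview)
Your argument is correct and reaches the same endpoint (Fekete's lemma) as the paper, but by a somewhat different route. The paper first reduces from general $n$ to multiples of $N$ via the comparability $|I_{i_1\dots i_n}|\asymp |I_{i_1\dots i_{kN}}|$, then passes to the associated graph-directed system from Section~\ref{cs63} and rewrites the sum as $\sum_{\mathbf v^1,\dots,\mathbf v^k}|\rho_{\mathbf v^1,\mathbf v^2}|^s\cdots|\rho_{\mathbf v^{k-1},\mathbf v^k}|^s$; since each $|\rho_{\mathbf v,\mathbf u}|\le 1$, this sequence is \emph{exactly} submultiplicative, so Fekete applies directly. Your approach bypasses the graph-directed machinery entirely: you use BDP and the chain rule to get two-sided almost-multiplicativity of $S_n^s$ itself, which is more self-contained. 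The paper's detour has the payoff that it produces formula \eqref{cr83}, which is immediately reused in the proof of Lemma~\ref{cs76}; your route does not give that formula for free.

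One point worth tightening: the claim that $|f_{\mathbf j}(I^{\mathcal F})|$ is bounded \emph{below} by a constant multiple of $\inf_x|f_{\mathbf j}'(x)|\cdot|I^{\mathcal F}|$ is not a consequence of piecewise linearity alone when the maps are non-injective (a long zigzag with constant slope $\rho$ can have image much smaller than $\rho|I|$). The correct justification is precisely the ``peel off the last $N$ symbols'' argument you mention at the end: for $|\mathbf j|>N$ one has $|I_{\mathbf j}|/\bigl(|f_{\mathbf j}'(\xi)||I^{\mathcal F}|\bigr)=|I_{\sigma^{n-N}\mathbf j}|/\bigl(|f_{\sigma^{n-N}\mathbf j}'(\xi)||I^{\mathcal F}|\bigr)$, a ratio that ranges over finitely many $N$-words and is therefore bounded away from $0$ and $\infty$. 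With that clarification in place your proof is complete.
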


\begin{proof}
Since $\mathcal{F}$ is regular, it satisfies the BDP according to Lemma \ref{cr93}. It follows that for every $n$ there is a suitably big $k\geq n-N$ and constants $D_1=D_1(N),D_2=D_2(N)$ such that
\begin{equation}\label{cr86}
D_1 \left\vert I_{i_1,\dots ,i_{kN}}\right\vert \leq
\left\vert I_{i_1,\dots ,i_{n}}\right\vert \leq
D_2 \left\vert I_{i_1,\dots ,i_{kN}}\right\vert .
\end{equation}

Using this we have that
\begin{align}\label{cr97}
\Phi (s) &=\limsup_{n\rightarrow\infty} \frac{1}{n} \log \sum_{i_1,\dots ,i_{n}} \left\vert I_{i_1,\dots ,i_{n}}\right\vert^s \\
&=\limsup_{k\rightarrow\infty} \frac{1}{kN} \log \sum_{i_1,\dots ,i_{kN}} \left\vert I_{i_1,\dots ,i_{kN}}\right\vert^s \nonumber \\
&=\limsup_{k\rightarrow\infty} \frac{1}{kN} \log \sum_{i_1,\dots ,i_{kN}} \left\vert f_{i_1,\dots i_{(k-1)N}}\left(I_{i_{(k-1)N+1},\dots ,i_{kN}}\right) \right\vert^s \nonumber \\
&=\limsup_{k\rightarrow\infty} \frac{1}{kN} \log \sum_{\mathbf{v}^1, \dots ,
\mathbf{v}^k\in\mathcal{V}} \left\vert
F_{\mathbf{v}^1,\mathbf{v}^2}\circ \cdots\circ
F_{\mathbf{v}^{k-1},\mathbf{v}^k}
\left(I_{\mathbf{v}^k}\right) \right\vert^s \nonumber \\
&=\limsup_{k\rightarrow\infty} \frac{1}{kN} \log \sum_{\mathbf{v}^1, \dots ,
\mathbf{v}^k\in\mathcal{V}}
|\rho_{\mathbf{v}^1,\mathbf{v}^2}|^s\cdots
|\rho_{\mathbf{v}^{k-1},\mathbf{v}^k}|^s
\left\vert \left(I_{\mathbf{v}^k}\right) \right\vert^s \nonumber \\
&=\limsup_{k\rightarrow\infty} \frac{1}{kN} \log \sum_{\mathbf{v}^1, \dots ,
\mathbf{v}^k\in\mathcal{V}}
|\rho_{\mathbf{v}^1,\mathbf{v}^2}|^s\cdots
|\rho_{\mathbf{v}^{k-1},\mathbf{v}^k}|^s , \nonumber
\end{align}
where in the last equality we used that the length of any cylinder interval of level $N$ can be easily bounded by suitable constants.

The series $c_k:=\log \sum_{\mathbf{v}^1, \dots ,\mathbf{v}^k\in , \mathcal{V}} |\rho_{\mathbf{v}^1,\mathbf{v}^2}|^s\cdots
|\rho_{\mathbf{v}^{k-1},\mathbf{v}^k}|^s$ is subadditive, and hence by Fekete's lemma the limit $\lim_{k\rightarrow\infty}\frac{c_k}{k}$ exists.
Applying this fact to \eqref{cr97} yields
\begin{equation}\label{cr83}
\Phi (s)=\lim_{k\rightarrow\infty} \frac{1}{kN} \log
\sum_{\mathbf{v}^1, \dots ,\mathbf{v}^k\in\mathcal{V}}
|\rho_{\mathbf{v}^1,\mathbf{v}^2}|^s\cdots
|\rho_{\mathbf{v}^{k-1},\mathbf{v}^k}|^s.
\end{equation}

Together \eqref{cr83} and \eqref{cr97} imply that
\begin{equation}\label{cr82}
\Phi (s) =\lim_{n\rightarrow\infty} \frac{1}{n} \log \sum_{i_1,\dots ,i_{n}} \left\vert I_{i_1,\dots ,i_{n}}\right\vert^s .
\end{equation}

\end{proof}

\begin{proof}[The proof of Lemma \ref{cs76}]
We define
\begin{align} \label{cr99}
\overline{Q}&:=\max_{i_1,\dots ,i_N} \left\vert I_{i_1,\dots ,i_N}\right\vert ,\quad
\underline{Q}:=\min_{i_1,\dots ,i_N} \left\vert I_{i_1,\dots ,i_N}\right\vert ,\\
\overline{q}&:=\max_{\mathbf{u}, \mathbf{v}\in [m]^N} \mathfrak{u}_{\mathbf{u}}\upnu_{\mathbf{v}} ,\quad
\underline{q}:=\min_{\mathbf{u}, \mathbf{v}\in [m]^N} \mathfrak{u}_{\mathbf{u}}\upnu_{\mathbf{v}} .
\end{align}

By Fact \ref{cr88} and its proof we have
\begin{equation}\label{cr81}
\Phi (s)=\lim_{k\rightarrow\infty} \frac{1}{kN} \log \sum_{\mathbf{v}^1, \dots ,\mathbf{v}^k\in\mathcal{V}}
|\rho_{\mathbf{v}^1,\mathbf{v}^2}|^s\cdots
|\rho_{\mathbf{v}^{k-1},\mathbf{v}^k}|^s
\left\vert \left(I_{\mathbf{v}^k}\right) \right\vert^s.
\end{equation}

Observe that by the definition of measure $\mu$ we have
\begin{equation}\label{cr98}
\underline{q}|\rho_{\mathbf{v}^1,\mathbf{v}^2}\cdots \rho_{\mathbf{v}^{k-1},\mathbf{v}^k}|^{\alpha} \leq
\mu ([\mathbf{v}^1,\dots ,\mathbf{v}^k])
\leq \overline{q}|\rho_{\mathbf{v}^1,\mathbf{v}^2}\cdots \rho_{\mathbf{v}^{k-1},\mathbf{v}^k}|^{\alpha},
\end{equation}

Now we substitute $s=\alpha (=\alpha_{\mathcal{F}})$ into \eqref{cr97}, and use the bounds of \eqref{cr98} and the constants introduced in \eqref{cr99} to obtain
\begin{align}\label{cr96}
\Phi (\alpha )&\leq
\lim_{k\rightarrow\infty} \frac{1}{kN} \log \sum_{\mathbf{v}^1, \dots ,\mathbf{v}^k\in\mathcal{V}}
|\rho_{\mathbf{v}^1,\mathbf{v}^2} \cdots
\rho_{\mathbf{v}^{k-1},\mathbf{v}^k}|^{\alpha}\overline{Q}^{\alpha} \\
&\leq
\lim_{k\rightarrow\infty} \frac{1}{kN}\frac{\overline{Q}^{\alpha}}{\underline{q}} \log \underbrace{
\sum_{\mathbf{v}^1,\dots \mathbf{v}^{k}\in\mathcal{V}} \mu ([\mathbf{v}^1,\dots \mathbf{v}^{k}])}_{=1} =0, \nonumber
\end{align}
since $\mu$ is a probability measure. Note that it only holds for $s=\alpha (\mathcal{F})$, otherwise $\mu$ would fail to be a probability measure.
Similar calculations show that $0$ is also a lower bound.

\begin{equation}\label{cr84}
\Phi (\alpha )\geq
\lim_{k\rightarrow\infty} \frac{1}{kN}\frac{\underline{Q}^{\alpha}}{\overline{q}} \log \underbrace{
\sum_{\mathbf{v}^1,\dots \mathbf{v}^{k}\in\mathcal{V}} \mu ([\mathbf{v}^1,\dots \mathbf{v}^{k}])}_{=1} =0. 
\end{equation}

As $s_{\mathcal{F}}$ is the unique zero of $\Phi(s)$, we just proved that $s_{\mathcal{F}}=\alpha$.
\end{proof}

\begin{proof}[Proof of Theorem \ref{cs74}]
  Now we fix an arbitrary regular CPLIFS $\mathcal{F}$ of order $N$, for which the generated self-similar IFS $\mathcal{S}_{\mathcal{F}}$ satisfies ESC.
  Let $\mathscr{S}:=\mathscr{S}_{\mathcal{F}}:=\lbrace S_{\mathbf{a}}\rbrace_{\mathbf{a}\in\mathcal{A}}$ be its associated self-similar system as defined in  Section \ref{cs63}.
  Since the ESC holds for $\mathcal{S}_{\mathcal{F}}$, it also holds for $\mathscr{S}$, as the later one is a subsystem of $\mathcal{S}_{\mathcal{F}}^N$ (see \cite{barany2016dimension}).

Recall that the measure $\mathfrak{m}$ defined in \eqref{cs84} satisfies
\begin{equation}\label{cr70}
\mbox{spt}(\Pi_{\mathscr{S}_{\mathcal{F}}\ast }\mathfrak{m})=\Lambda^{\mathcal{F}}.
\end{equation}
We also know that $h(\mathfrak{m})/\chi (\mathfrak{m})=\alpha$ by \eqref{cs77}. 
Now we can apply the
Jordan Rapaport
 Theorem (Theorem \ref{cv68}) to obtain
\begin{equation}\label{cr69}
\min \lbrace 1,\alpha\rbrace \leq \dim_{\rm H} \Lambda^{\mathcal{F}}.
\end{equation}
Applying Lemma \ref{cs76} yields
\begin{equation}\label{cr68}
\min \lbrace 1,s_{\mathcal{F}}\rbrace \leq \dim_{\rm H} \Lambda^{\mathcal{F}}\leq
\dim_{\rm B} \Lambda^{\mathcal{F}}
\leq \min \lbrace 1,s_{\mathcal{F}}\rbrace ,
\end{equation}
where the last inequality comes from Corollary \ref{cr12}.
\end{proof}

Now we can prove our Main Theorem.
\begin{proof}[Proof of Theorem \ref{ct48} assuming  Proposition \ref{ct46}]
Fix an arbitrary type \newline
  $\pmb{\ell }=(l(1), \dots ,l(m))$ and
  $\pmb{\rho}\in \mathfrak{R}_{\mathrm{small}}^{\pmb{\ell }}$. Let us define the exceptional sets
  \begin{equation}\label{cs62}
E_{\pmb{\rho}}
:=
\left\{(\mathfrak{b},\pmb{\tau})
\in \mathfrak{B}^{\pmb{\ell }}\times \mathbb{R}^m
:
\dim_{\rm H} \Lambda^{(\mathfrak{b},\pmb{\tau},\pmb{\rho})}
\ne
s_{(\mathfrak{b},\pmb{\tau},\pmb{\rho})}
\right\},
  \end{equation}
  \begin{equation}\label{cs61}
E_{\pmb{\rho},\mathrm{ESC}}
:=
\left\{(\mathfrak{b},\pmb{\tau})
\in \mathfrak{B}^{\pmb{\ell }}\times \mathbb{R}^m
:
\mathscr{S}_{\mathcal{F}^  {(\mathfrak{b},\pmb{\tau},\pmb{\rho})}}
\mbox{ does not satisfy ESC}
\right\},
  \end{equation}
  \begin{equation}\label{cs60}
E_{\pmb{\rho},\mathrm{irregular}}
:=
\left\{(\mathfrak{b},\pmb{\tau})
\in \mathfrak{B}^{\pmb{\ell }}\times \mathbb{R}^m
:
\mathcal{F}^{(\mathfrak{b},\pmb{\tau},\pmb{\rho})}
\mbox{ is not regular }
\right\}.
  \end{equation}
Using this terminology Theorem \ref{cs74} asserts that
\begin{equation}\label{cs53}
  E_{\pmb{\rho}} \subset
E_{\pmb{\rho},\mathrm{ESC}}\cup   E_{\pmb{\rho},\mathrm{irregular}}.
\end{equation}
Recall that the mapping $\Phi_{\pmb{\rho}}$ (defined in \eqref{cv29}) associates each $(\mathfrak{b},\pmb{\tau})\in \mathfrak{B}^{\pmb{\ell}}\times\mathbb{R}^m$ to a vector $\mathbf{t}\in\mathbb{R}^{L+m}$ uniquely.
$\Phi_{\pmb{\rho}}(\mathfrak{b},\pmb{\tau})$ is the translation vector of $\mathscr{S}_{\mathcal{F}^{(\mathfrak{b},\pmb{\tau},\pmb{\rho})}}$.
Hence, by Hochmann's theorem (Theorem \ref{cv78})
\begin{equation}\label{cs56}
\dim_{\rm P} \Phi_{\pmb{\rho}}(E_{\pmb{\rho},\mathrm{ESC}}) <L+m.
\end{equation}
According to Fact \ref{cv30}, $\Phi_{\pmb{\rho}}$ preserves Packing dimension, and thus
\begin{equation}\label{cs55}
\dim_{\rm P} E_{\pmb{\rho},\mathrm{ESC}} <L+m.
\end{equation}
It follows from Proposition \ref{ct46} that
\begin{equation}\label{cs59}
\dim_{\rm H} E_{\pmb{\rho},\mathrm{irregular}}<L+m.
\end{equation}
Combining \eqref{cs53} with the last two inequalities completes the proof of the Theorem.
\end{proof}

\section{The proof of the Main Proposition}\label{cr58}

\begin{proof}[Proof of Proposition \ref{ct46}]

 Our aim in this section is to verify Proposition \ref{ct46}. That is we will prove that the attractor $\Lambda^{\mathcal{F}}$ of a $\dim_{\rm P}$-typical small CPLIFS $\mathcal{F}$ does not contain any breaking points. More precisely, we prove that for every $\pmb{\rho} \in\mathfrak{R}_{\mathrm{small}}^{\pmb{\ell }}$
 \begin{equation}\label{cs45}
 \dim_{\rm P}
 \left(
 E_{\pmb{\rho},\mathrm{irregular}}
 \right) \leq L+m-1+s_*,
 \end{equation}
 where $E_{\pmb{\rho},\mathrm{irregular}}$ was defined in \eqref{cs60} and $0<s_*<1$ is defined by
\begin{equation}\label{cs44}
  \sum\limits_{k=1}^{m} \rho_k^{s_*}=1.
\end{equation}

By symmetry, it is enough to prove that
\begin{equation}\label{cs52}
  b_{1,1}\not\in\Lambda^{\mathcal{F}},\quad
  \mbox{ for a $\dim_{\rm P}$-typical small CPLIFS }\mathcal{F}.
\end{equation}
Fix an arbitrary type $\pmb{\ell }=(l(1), \dots ,l(m))$ and a
$\pmb{\rho} \in\mathfrak{R}_{\mathrm{small}}^{\pmb{\ell }}$.
We always assume that $(\mathfrak{b},\pmb{\tau})\in \mathfrak{B}^{\pmb{\ell }}\times \mathbb{R}^m$, and write
  $\Lambda^{(\mathfrak{b},\pmb{\tau},\pmb{\rho})}$
for the  attractor
of the CPLIFS $\mathcal{F}^{(\mathfrak{b},\pmb{\tau},\pmb{\rho})}$.
As usual, we write $L:=\sum\limits_{k=1}^{m}l(k)$.

For arbitrary $U<V$ we write
\begin{equation}\label{cs51}
  E_{U,V}:=
  \left\{
(\mathfrak{b},\pmb{\tau})\in \left(\mathfrak{B}^{\pmb{\ell }}\cap (U,V)^{L}\right)\times
 (U,V)^m
  :
  b_{1,1}\in \Lambda^{(\mathfrak{b},\pmb{\tau},\pmb{\rho})} \subset (U,V)
  \right\}.
\end{equation}
To show that \eqref{cs45} holds, it is enough to prove that
for all $U<V$ we have
\begin{equation}\label{cs50}
  \dim_{\rm P} E_{U,V} \leq L+m-1+s_*.
\end{equation}
From now on we fix  $U<V$, and we write
\begin{equation}\label{cs49}
  \Delta:=
  \left\{
  (\mathfrak{b},\pmb{\tau})
\in \left(
  \mathfrak{B}^{\pmb{\ell }}\cap(U,V)^L
  \right)\times
  (U,V)^m:
   \Lambda^{(\mathfrak{b},\pmb{\tau},\pmb{\rho})} \subset (U,V)
  \right\}.
\end{equation}
Let $D$ be an arbitrary closed cube contained in $\Delta$:
\begin{equation}\label{cs47}
  D=D_1\times D_2, \mbox{ where }
  D_1 \subset \mathbb{R}\mbox{ and } D_2 \subset \mathbb{R}^{L+m-1}.
\end{equation}
To verify \eqref{cs50} we need to prove that
\begin{equation}\label{cs48}
   \dim_{\rm P} (E_{U,V}\cap D) \leq L+m-1+s_*.
\end{equation}
For an  $r>0$ we define a kind of symbolic Moran cover
\begin{equation}\label{cs46}
  \mathcal{M}_r:=
  \left\{
  (k_1, \dots ,k_n)\in\Sigma^*:
  |\rho_{k_1 \dots k_n}| \leq r< |\rho_{k_1 \dots k_{n-1}|}
  \right\}.
\end{equation}
By a standard argument it is easy to see that
\begin{equation}\label{cs43}
  \frac{1}{r^{s_*}}
   \leq \#\mathcal{M}_r  <
    \frac{1}{\rho_{\mathrm{min}}^{s_*}r^{s_*}}.
\end{equation}
Clearly, for all $r>0$ we have
\begin{equation}\label{cs42}
  E_{U,V}\cap D=\bigcup_{(k_1 \dots k_n)\in\mathcal{M}_r}
\widetilde{K}_{k_1, \dots ,k_n}.
\end{equation}
where $\widetilde{K}_{k_1, \dots ,k_n}:=\left\{(\mathfrak{b},\pmb{\tau})\in   D:
b_{1,1}\in \Lambda_{k_1 \dots k_n}^{(\mathfrak{b},\pmb{\tau},\pmb{\rho})}
\right\}.$
Let $\widetilde{N}_{k_1 \dots k_n}(r)$ be the minimal number of $r$-mesh cubes required to cover
$\widetilde{K}_{k_1, \dots ,k_n}$.
Using \eqref{cs43}, it is enough to prove that
there exists a constant $C_1$ which may depend on $D$ but independent of $r$
and $(k_1, \dots ,k_n)\in\mathcal{M}_r$ such that
\begin{equation}\label{cs41}
\widetilde{N}_{k_1 \dots k_n}(r)
 \leq
 \frac{C_1}{r^{L+m-1}},\quad \forall  r>0,\
 (k_1, \dots ,k_n)\in\mathcal{M}_r.
\end{equation}
Namely, putting together this and \eqref{cs46} we get from \eqref{cs42} that
$$\overline{\dim}_{\rm B}\left(E_{U,V}\cap D\right) \leq L+m-1+s_* $$
for all cubes $D \subset \Delta$, which implies \eqref{cs48}. 
Using that $\Delta$ is the countable union of such cubes we obtain \eqref{cs50}.

Without loss of generality we may assume that $U=1$.
Now we fix an $r>0$ which is so small that
for every $(\mathfrak{b},\pmb{\tau})\in D$ and $(k_1, \dots ,k_n)\in\mathcal{M}_r$ we have
 \begin{equation}\label{cr67}
   f_{k_1 \dots k_n}^{(\mathfrak{b},\pmb{\tau},\pmb{\rho})}
   ([0,V+1]) \subset \left(\frac{1}{2},V+\frac{1}{2}\right)
   \mbox{ and }
   |\rho_{k_1 \dots k_n}| <\min\left\{|D_1|,
   \frac{1}{6(V-1)}
   \right\}.
 \end{equation}
We also fix an arbitrary $(k_1, \dots ,k_n)\in\mathcal{M}_r$. Now we verify \eqref{cs41}.
  For the duration of this proof we introduce the following notation
 \begin{equation}\label{cs40}
\pmb{\eta}:=(b_{1,2}, \dots ,b_{1,l(1)}, \dots ,
b_{m,1}, \dots ,b_{m,l(m)},\tau_1, \dots ,\tau_m)\in D_2.
 \end{equation}
That is, instead of
$(\mathfrak{b},\pmb{\tau})$ from now on
 we write $(b_{1,1},\pmb{\eta})\in D=D_1\times D_2 .$
Let $B_k^{(b_{1,1},\pmb{\eta}) }$ be the set of breaking points of the function $f_k^{(b_{1,1},\pmb{\eta},\pmb{\rho})}(\cdot)$ and we write $B^{(b_{1,1},\pmb{\eta}) }_{k_1\dots k_n}$
for the set of breaking points of $f^{(b_{1,1},\pmb{\eta},\pmb{\rho})}_{k_1\dots k_n}(\cdot) $. Then
\begin{equation}\label{cs39}
B^{(b_{1,1},\pmb{\eta}) }_{k_1\dots k_n}=  B_{k_n}^{(b_{1,1},\pmb{\eta}) }\cup\bigcup_{p=2}^{n}f_{k_n}^{-1}\circ\cdots\circ
  f_{k_{p}}^{-1}B_{k_{p-1}}^{(b_{1,1},\pmb{\eta}) }.
\end{equation}
The complement $\left(B^{(b_{1,1},\pmb{\eta}) }_{k_1\dots k_n}\right)^c$ is the union for all
$(i_1, \dots ,i_n)\in[l(k_1)+1]\times\cdots\times[l(k_n)+1]$ of the open intervals
\begin{eqnarray}\label{cv36}
\nonumber
     J_{(k_1,i_1) \dots (k_n, i_n)} \!\! \!\!  \!\! &=& \!\!  \!\! J_{k_n,i_n}\cap S_{k_n,i_n}^{-1}(J_{k_{n-1},i_{n-1}})
\cap \cdots\cap
S_{k_n,i_n}^{-1}\circ\cdots\circ  S_{k_2,i_2}^{-1}(J_{k_1,i_1})\\
    \!\!  \!\!&=& \!\!\!\!
      \left\{x\in    J_{k_n,i_n}  \!\!: \!\!
      S_{k_n,i_n}(x)\in J_{k_{n-1},i_{n-1}}, \dots ,
      S_{(k_2,i_2)\cdots(S_{k_n,i_n})}(x)\in J_{k_{1},i_{1}}
       \right\}.
   \end{eqnarray}
Let $ \overline{J}_{(k_1,i_1) \dots (k_n, i_n)}$ be the closure of
$J_{(k_1,i_1) \dots (k_n, i_n)}$, then by the formulas \eqref{cv34}
and \eqref{cv32}
we obtain that for all $ x\in \overline{J}_{(k_1,i_1) \dots (k_n, i_n)}$ we have
\begin{multline}\label{cs37}
 f_{k_1 \dots k_n}^{(b_{1,1},\pmb{\eta},\pmb{\rho}) }(x)  =
\widetilde{\rho}_n \cdot x\\
+
\sum\limits_{j=1}^{n}
\widetilde{\rho}_{j-1}\underbrace{\left(
\tau_{k_j}+b_{k_j,1}(\rho_{k_j,1}-\rho_{k_j,2})+\cdots+
b_{k_j,i_j}(\rho_{k_j,i_j}-\rho_{k_j,i_j+1})
\right)}_{t_{k_j,i_j}},
\end{multline}
where $\widetilde{\rho}_q:=\prod\limits_{z=1}^{q}\rho_{k_z,i_z}$.
Using this, a simple calculation shows that the following fact holds:
\begin{fact}\label{ct84}

  For a fixed $x$
  and $k_1, \dots ,k_n$ the following hold
   \begin{enumerate}[label={\bf (a)}]
\item  the function
  $f_{k_1 \dots k_n}^{(b_{1,1},\pmb{\eta},\pmb{\rho})}(x)$ is  piecewise linear in all of its variables $\tau_k$ and $b_{p,q}$. So, if we fix all variables but one, then the partial derivative, against the variable which is not fixed, exists at all but finitely many points. 
   \end{enumerate}
  
The next two assertions are meant in the sense that the estimates hold where the partial derivatives exist.

  \begin{enumerate}[resume,label={\bf (a)}]
\item   For every $p\in[m]$ and $q\in[l(p)]$ we have
  \begin{equation}\label{ct83}
  \left|\frac{\partial f_{k_1 \dots k_n}^{(b_{1,1},\pmb{\eta},\pmb{\rho})}(x)}{\partial b_{p,q}}\right|<\gamma:=\sum\limits_{k=1}^{\infty }\rho_{\max}^k<1.
  \end{equation}
\item For every $k\in[m]$
  \begin{equation}\label{ct82}
  \left|\frac{\partial f_{k_1 \dots k_n}^{(b_{1,1},\pmb{\eta},\pmb{\rho})}(x)}{\partial \tau_{k}}\right|<2.
  \end{equation}
  \end{enumerate}
\end{fact}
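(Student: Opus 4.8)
The plan is to read off everything from the explicit formula \eqref{cs37}, which already displays $f_{k_1\dots k_n}(x,\mathfrak{b},\pmb{\tau})$ as an affine function of $(\mathfrak{b},\pmb{\tau})$ with \emph{constant} coefficients — the products $\widetilde{\rho}_q$ of the fixed slopes — on any region where the intervals of linearity containing the successive iterates of $x$ do not change. The first step is to make that region decomposition precise. Fix $x$ and $k_1,\dots,k_n$. For each tuple $(i_1,\dots,i_n)\in[l(k_1)+1]\times\cdots\times[l(k_n)+1]$ the set of parameters $(\mathfrak{b},\pmb{\tau})$ with $x\in\overline{J}_{(k_1,i_1)\dots(k_n,i_n)}$ is, by \eqref{cv36}, an intersection of finitely many conditions of the form ``$S_{k_r,i_r}\circ\cdots\circ S_{k_n,i_n}(x)$ lies between two consecutive breaking points of $f_{k_{r-1}}$''; since every $t_{k,i}$ is affine in $(\mathfrak{b},\pmb{\tau})$ and the slopes are fixed, each such composed similarity evaluated at the fixed number $x$ is itself affine in $(\mathfrak{b},\pmb{\tau})$, so every condition is a pair of affine inequalities. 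Hence the parameter space is cut into finitely many convex polyhedra, and on the interior of each, \eqref{cs37} holds verbatim and is affine in $(\mathfrak{b},\pmb{\tau})$. This yields part (a): $f_{k_1\dots k_n}(x,\cdot,\cdot)$ is continuous and piecewise affine with finitely many pieces, so its restriction to any line parallel to a coordinate axis is piecewise linear with finitely many breakpoints, and the one-variable partial derivatives exist off that finite set.

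For parts (b) and (c) I would just differentiate \eqref{cs37} on the interior of one such polyhedron, where the $\widetilde{\rho}_q$ are constants. Only the summands $j$ with $k_j=p$ involve $\tau_p$, and each contributes $\widetilde{\rho}_{j-1}$ to $\partial f_{k_1\dots k_n}/\partial\tau_p$; since $|\widetilde{\rho}_{j-1}|=\prod_{z<j}|\rho_{k_z,i_z}|\le\rho_{\max}^{\,j-1}$, summing the at most $n$ terms of a geometric series gives
\[
\Bigl|\frac{\partial f_{k_1\dots k_n}}{\partial\tau_p}\Bigr|\le\sum_{j\ge1}\rho_{\max}^{\,j-1}=\frac{1}{1-\rho_{\max}}<2,
\]
the last step because $\rho_{\max}<\tfrac12$ for a small CPLIFS (every $\rho_k<\tfrac12$ in Definition \ref{cr54}); this is (c). Likewise only the summands with $k_j=p$ involve $b_{p,q}$, and on a fixed polyhedron each such summand contributes either $0$ or $\widetilde{\rho}_{j-1}(\rho_{p,q}-\rho_{p,q+1})$, according to whether the point to which $f_{k_j}$ is applied lies to the left or to the right of $b_{p,q}$; hence
\[
\Bigl|\frac{\partial f_{k_1\dots k_n}}{\partial b_{p,q}}\Bigr|\le|\rho_{p,q}-\rho_{p,q+1}|\sum_{j\ge1}\rho_{\max}^{\,j-1}=\frac{|\rho_{p,q}-\rho_{p,q+1}|}{1-\rho_{\max}}.
\]
Here the smallness hypothesis is exactly what controls the remaining factor: when $f_p$ is injective it is monotone, so $\rho_{p,q}$ and $\rho_{p,q+1}$ share a sign and $|\rho_{p,q}-\rho_{p,q+1}|\le\rho_p\le\rho_{\max}$, which gives the bound $\gamma=\sum_{k\ge1}\rho_{\max}^k<1$ of \eqref{ct83}; when $f_p$ is not injective, the requirement $\rho_p<\tfrac{1-\rho_{\max}}{2}$ of Definition \ref{cr54} forces $|\rho_{p,q}-\rho_{p,q+1}|\le2\rho_p<1-\rho_{\max}$, and the derivative is again strictly below $1$ (which is all that is used afterwards).

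The geometric-series estimates are routine and uniform in $n$, in $(k_1,\dots,k_n)$ and in $x$. The only two points that need real care are the bookkeeping in (a) — verifying that the decomposition of parameter space into polyhedra is finite and that within each polyhedron the successive iterates of the fixed point $x$, and therefore $f_{k_1\dots k_n}(x,\cdot,\cdot)$, are genuinely affine in $(\mathfrak{b},\pmb{\tau})$ — and the observation in (b) that the jump $|\rho_{p,q}-\rho_{p,q+1}|$ between two consecutive slopes of a single map $f_p$ stays small, which is precisely the feature Definition \ref{cr54} is designed to guarantee.
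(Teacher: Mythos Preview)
Your proof is correct and is exactly the ``simple calculation'' from \eqref{cs37} that the paper invokes without writing out; you supply the details the paper omits. Your observation about the non-injective case is a genuine refinement: when $f_p$ is not injective the bound you obtain is $\frac{|\rho_{p,q}-\rho_{p,q+1}|}{1-\rho_{\max}}<1$ rather than the specific constant $\gamma=\frac{\rho_{\max}}{1-\rho_{\max}}$, and indeed the former can exceed the latter, so the statement of part~(b) as written is slightly too strong in that case --- but, as you note, every later use of \eqref{ct83} (namely \eqref{cs24}, \eqref{cs32}, \eqref{cs22}) only needs some uniform constant strictly below~$1$, so nothing downstream is affected.
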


Clearly,
\begin{equation}\label{cs36}
  \Lambda^{(b_{1,1},\pmb{\eta}) }_{k_1 \dots k_n} \subset I^{(b_{1,1},\pmb{\eta}) }:=
  \left(
  f^{(b_{1,1},\pmb{\eta}) }_{k_1 \dots k_n}(1)-(V-1)\rho_{k_1 \dots k_n},
  f^{(b_{1,1},\pmb{\eta}) }_{k_1 \dots k_n}(1)+(V-1)\rho_{k_1 \dots k_n}
  \right),
\end{equation}
and we also have
\begin{equation}\label{cs34}
\widetilde{K}_{k_1 \dots k_n} \subset
K_{k_1 \dots k_n}:=
\left\{(b_{1,1},\pmb{\eta}) \in D:
b_{1,1}\in  I^{(b_{1,1},\pmb{\eta}) }\right\}.
\end{equation}
Like above, we write $N_{k_1 \dots k_n}(r) $ for the minimal number of $r$-mesh cubes required to cover the set $K_{k_1 \dots k_n}$.
By \eqref{cs34} $\widetilde{N}_{k_1 \dots k_n}(r) \leq N_{k_1 \dots k_n}(r)$.
Hence to verify \eqref{cs41} we only need to prove that
\begin{lemma}\label{cs20}
 There is a constant $C_1$ which is independent of $r$ and $(k_1, \dots ,k_n)\in\mathcal{M}_r$
such that
\begin{equation}\label{cs21}
N_{k_1 \dots k_n}(r)
 \leq
 \frac{C_1}{r^{L+m-1}},\quad \forall  r>0,\
 (k_1, \dots ,k_n)\in\mathcal{M}_r.
\end{equation}
\end{lemma}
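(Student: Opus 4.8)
The plan is to reduce \eqref{cs21} to a one–dimensional transversality estimate in the single coordinate $b_{1,1}$. By \eqref{cs34} and \eqref{cs36}, writing
\[
\Delta_{k_1\dots k_n}(b_{1,1},\pmb{\eta}):=b_{1,1}-f_{k_1\dots k_n}^{(b_{1,1},\pmb{\eta})}(1),
\]
we have
\[
K_{k_1\dots k_n}=\bigl\{(b_{1,1},\pmb{\eta})\in D:\ \bigl|\Delta_{k_1\dots k_n}(b_{1,1},\pmb{\eta})\bigr|<(V-1)\rho_{k_1\dots k_n}\bigr\},
\]
and since $(k_1,\dots ,k_n)\in\mathcal{M}_r$ the quantity on the right is at most $(V-1)r$. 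From Fact \ref{ct84} I would extract two properties. First: for each fixed $\pmb{\eta}$, the map $b_{1,1}\mapsto\Delta_{k_1\dots k_n}(b_{1,1},\pmb{\eta})$ is continuous, piecewise linear, and strictly increasing with every slope lying in $[1-\gamma,1+\gamma]$, because $\bigl|\partial f_{k_1\dots k_n}/\partial b_{1,1}\bigr|<\gamma<1$ by \eqref{ct83}; in particular it is a homeomorphism onto its image whose inverse is $(1-\gamma)^{-1}$–Lipschitz. Second: $\Delta_{k_1\dots k_n}$ is Lipschitz in each of the $L+m-1$ coordinates of $\pmb{\eta}$ with constant at most $2$, by \eqref{ct83} and \eqref{ct82}. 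Here $\gamma<1$ is available precisely because $\mathcal{F}$ is small, which forces $\rho_{\max}<\tfrac12$.

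Granting these two properties, I would argue as follows. Fix an $r$–mesh cube $Q'\subset\mathbb{R}^{L+m-1}$ in the $\pmb{\eta}$–space that meets $D_2$, and pick a reference point $\pmb{\eta}_0\in Q'$. For any $(b_{1,1},\pmb{\eta})\in K_{k_1\dots k_n}$ with $\pmb{\eta}\in Q'$, the coordinates of $\pmb{\eta}$ and $\pmb{\eta}_0$ differ by at most $r$ each, so the second property yields
\[
\bigl|\Delta_{k_1\dots k_n}(b_{1,1},\pmb{\eta}_0)\bigr|<(V-1)r+2(L+m-1)r=:c_0r,
\]
and then by the first property all such $b_{1,1}$ lie in one interval of length at most $c_1r$, where $c_1:=2c_0/(1-\gamma)$. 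Hence $K_{k_1\dots k_n}\cap(D_1\times Q')$ is contained in a box $J\times Q'$ with $J$ an interval of length at most $c_1r$, and this box is covered by at most $\lceil c_1\rceil+1$ cubes of the $r$–mesh of $\mathbb{R}^{L+m}$.

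It remains to count: the number of $r$–mesh cubes of $\mathbb{R}^{L+m-1}$ meeting the bounded set $D_2$ is at most $C_{D_2}\,r^{-(L+m-1)}$, with $C_{D_2}$ depending only on the side length of $D_2$. Summing the previous estimate over these cubes gives
\[
N_{k_1\dots k_n}(r)\le(\lceil c_1\rceil+1)\,C_{D_2}\,r^{-(L+m-1)}=:C_1\,r^{-(L+m-1)},
\]
where $C_1$ depends only on $D$, on $V$, and on $\gamma$ (hence on $\rho_{\max}$ and $L+m$), and in particular not on $r$ or on the word $(k_1,\dots ,k_n)\in\mathcal{M}_r$; this is exactly \eqref{cs21}.

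The only step that needs genuine care is establishing the two properties above in the presence of the breakpoints: $\Delta_{k_1\dots k_n}$ is only piecewise linear, so its partial derivatives exist merely off a finite set, and one has to check — using the explicit affine formula \eqref{cs37} on each closed cell $\overline{J}_{(k_1,i_1)\dots (k_n,i_n)}$ together with continuity of $\Delta_{k_1\dots k_n}$ in the parameters across adjacent cells — that the slope bounds hold globally and that $b_{1,1}\mapsto\Delta_{k_1\dots k_n}(b_{1,1},\pmb{\eta})$ is genuinely an increasing homeomorphism with slope bounded below by $1-\gamma$. Everything after that is a routine box–counting computation, and the uniformity of the constants in $n$ and in the word is automatic, since the bounds in Fact \ref{ct84} are themselves uniform.
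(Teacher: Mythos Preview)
Your argument is correct and follows essentially the same strategy as the paper: both exploit that $|\partial f_{k_1\dots k_n}/\partial b_{1,1}|<\gamma<1$ from Fact~\ref{ct84} to obtain transversality in the $b_{1,1}$-direction, combine this with the uniform Lipschitz bounds in the remaining $L+m-1$ parameters, and then do a straightforward box count over an $r$-mesh of $D_2$. The only presentational difference is that the paper packages the transversality step by introducing an auxiliary function $q(\pmb{\eta})$ (essentially the fixed point of $G(\cdot,\pmb{\eta})=f_{k_1\dots k_n}^{(\cdot,\pmb{\eta})}(1)-(V-1)\rho_{k_1\dots k_n}$, clipped to $[d_l,d_r]$) and shows each fiber $K_{k_1\dots k_n}^{\pmb{\eta}}$ lies in $[q(\pmb{\eta}),q(\pmb{\eta})+2(V-1)\rho_{k_1\dots k_n}/(1-\gamma)]$, whereas you bypass this via a direct reference-point triangle inequality on $\Delta_{k_1\dots k_n}$; your route is slightly more streamlined but yields the same bound with the same constants.
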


 \begin{proof}[Proof of Lemma \ref{cs20}]
For our arbitrary but fixed $r>0$ and $(k_1, \dots ,k_n)\in \mathcal{M}_r$
we define the mapping $G:D\to \left(\frac{1}{3},V+\frac{2}{3}\right)$,
\begin{equation}\label{cs35}
  G(b_{1,1},\pmb{\eta}) :=f^{(b_{1,1},\pmb{\eta}) }_{k_1 \dots k_n}(1)-(V-1)\rho_{k_1 \dots k_n}.
\end{equation}
Observe that \eqref{ct83} gives
\begin{equation}\label{cs24}
  \left|
  \frac{\partial}{\partial b_{1,1}}G(b_{1,1},\pmb{\eta})
  \right|  < \gamma \quad \mbox{ for all } \pmb{\eta}\in D_2.
\end{equation}
Then for $w:=2(V-1)\rho_{k_1 \dots k_n}$ we have
\begin{equation}\label{cs33}
  K_{k_1 \dots k_n}=
\left\{
(b_{1,1},\pmb{\eta}) \in D:
b_{1,1}-w
<G(b_{1,1},\pmb{\eta})
<b_{1,1}
\right\} .
\end{equation}
We introduce the stripe $Z:=\left\{(x,y)\in D_1\times\mathbb{R}:
 x-w<y<x \right\}$ and
for an $\pmb{\eta}\in D_2$ we write
\begin{equation}\label{cs23}
K_{k_1 \dots k_n}^{\pmb{\eta}}:=\left\{b_{1,1}\in D_1:
(b_{1,1},G(b_{1,1},\pmb{\eta})) \in Z
\right\}.
\end{equation}
It follows from \eqref{ct83} that for all $\pmb{\eta}\in D_2$ we have
\begin{equation}\label{cs32}
  |G(b_{1,1},\pmb{\eta}) -G(b_{1,1}+u,\pmb{\eta})  |<\gamma
  |u|, \quad\  b_{1,1}, b_{1,1}+u\in D_1,\pmb{\eta}\in D_2.
\end{equation}
This immediately implies that
\begin{fact}\label{cs31} For every $x$ we have
  \begin{enumerate}[label={\bf (a)}]
  \item $\bigg(G(b_{1,1},\pmb{\eta})  \geq b_{1,1}-x\bigg)\Longrightarrow \bigg(G(b_{1,1}-u,\pmb{\eta}) >
  b_{1,1}-u-x\bigg) ,$ \\
  if  $b_{1,1}, b_{1,1}-u\in D_1$,
  \item $\bigg(G(b_{1,1},\pmb{\eta}) \leq b_{1,1}-x\bigg)
  \Longrightarrow
  \bigg(G(b_{1,1}+u,\pmb{\eta})<b_{1,1}+u-x\bigg) ,$ \\
  if  $b_{1,1}, b_{1,1}+u\in D_1$.
  \end{enumerate}
\end{fact}

Let us write $d_l$ and $d_r$ for the left and right endpoint of $D_1$ respectively. For any set $A\subset\mathbb{R}^{L+m}$ we define a projective mapping
\begin{equation}\label{cr62}
\mathrm{proj}_2 (A):=\left\{ \pmb{\eta}:\: \exists b_{1,1} \mbox{ such that } (b_{1,1},\pmb{\eta})\in A\right\}.
\end{equation}
Fact \ref{cs31} implies that
\begin{equation}\label{cs27}
  \pmb{\eta}\in \mathrm{Proj}_2(K_{k_1 \dots k_n})
  \Longleftrightarrow
  d_l-w<G(d_l,\pmb{\eta})\quad \&\quad  G(d_r,\pmb{\eta})<d_r.
\end{equation}
That is
\begin{equation}\label{cs25}
  \pmb{\eta}\not\in \mathrm{Proj}_2(K_{k_1 \dots k_n})
  \Longleftrightarrow
 G(d_l,\pmb{\eta}) \leq d_l
 \mbox{ or }
 G(d_r,\pmb{\eta}) \geq d_r.
\end{equation}

Assume that $d_l<G(d_l,\pmb{\eta})$ and $G(d_r,\pmb{\eta})<d_r$ holds. Then there is a  fixed point
of the function $G(\cdot,\pmb{\eta})$. Let us denote it by $\mathrm{Fix}(G(\cdot,\pmb{\eta}))$. Now we define the function
$q:D_2\to[d_l,d_r]$ as follows:
\begin{equation}\label{cs26}
q(\pmb{\eta}):=
\left\{
  \begin{array}{ll}
  \mathrm{Fix}(G(\cdot,\pmb{\eta}))  , & \hbox{ if $d_l<G(d_l,\pmb{\eta})$ and $G(d_r,\pmb{\eta})<d_r$;} \\
    d_l, & \hbox{if $G(d_l,\pmb{\eta}) \leq d_l$;} \\
    d_r, & \hbox{if $G(d_r,\pmb{\eta}) \geq d_r$.}
  \end{array}
\right.
\end{equation}

\begin{figure}[h]
  \centering
  \includegraphics[width=13cm]{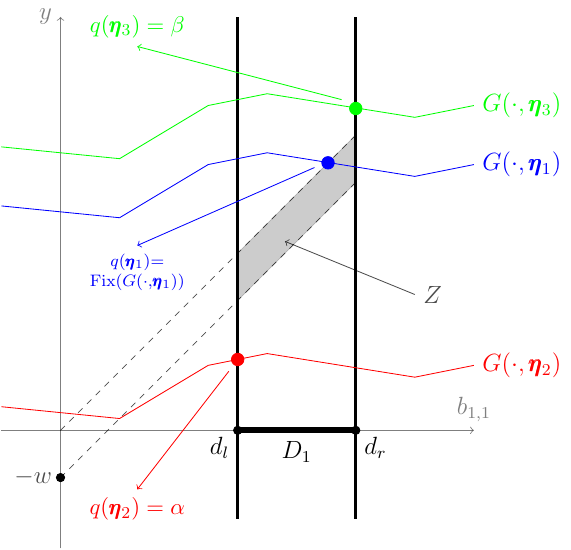}
  \caption{Visualization of the function $q(\pmb{\eta})$ defined in \eqref{cs26}. The continuous piecewise linear map $G(\cdot ,\pmb{\eta})$ is drawn for three different $\pmb{\eta}$ values to show examples for all cases.}
  \label{cs08}
\end{figure}

\begin{fact}\label{cs19}
  The function $q:D_2\to [d_l,d_r]$ has the following properties
  \begin{enumerate}[label={\bf (a)}]
\item  $q(\pmb{\eta})$ is piecewise affine.
\item There is a constant $K_0$ such that all of the  partial derivatives of the function $q(\pmb{\eta})$ are less than $K_0$ in
    absolute value.
  \end{enumerate}
\end{fact}
Part (a) is immediate from the formula \eqref{cs37}. We obtain part (b) from \eqref{ct83} and \eqref{ct82} either by a direct calculation
or by applying the Implicit Function Theorem. Now we put together \eqref{cs24}, \eqref{cs23} and Fact \ref{cs31} to obtain
\begin{equation}\label{cs22}
 K_{k_1 \dots k_n}^{\pmb{\eta}} \subset
 \left[q(\pmb{\eta}),q(\pmb{\eta})+\frac{2(V-1)\rho_{k_1 \dots k_n}}{1-\gamma}\right], \quad \mbox{ for all }\quad \pmb{\eta}\in D_2.
\end{equation}
In this way $K_{k_1 \dots k_n} \subset
  W_{k_1 \dots k_n}$, where
\begin{equation}\label{cs18}
  W_{k_1 \dots k_n}:=\left\{
(b_{1,1},\pmb{\eta}): \pmb{\eta}\in D_2,
b_{1,1}\in
 \left[q(\pmb{\eta}),q(\pmb{\eta})+\frac{2(V-1)\rho_{k_1 \dots k_n}}{1-\gamma}\right]
  \right\}
\end{equation}

Recall that $D=D_1\times D_2$ is a cube and $D_1=(d_l,d_r)$. That is
$D_2$ is a translate of the cube $(0,d_r-d_l)^{L+m-1}$.
Hence we can cover $D_2$ by $\left(\left\lceil
\frac{d_r-d_l}{\rho_{k_1 \dots k_n}}
\right\rceil +1\right)^{L+m-1}\quad$ ($L+m-1$)-dimensional $r$-mesh cubes, since
$(k_1, \dots ,k_n)$ $\in\mathcal{M}_r$ and in this way
\begin{equation}\label{cs17}
  \rho_{\min}r<\rho_{k_1 \dots k_n} \leq r.
\end{equation}
Let $Q$ be one of these $(L+m-1)$-dimensional $r$-mesh cubes.
Then it follows from Fact \ref{cs19} and \eqref{cs22} that the set $\mathrm{Proj}_2^{-1}(Q)\cap W_{k_1 \dots k_n}$
can be covered by $(L+m-1) \cdot K_0 \cdot \frac{2(V-1)}{1-\gamma}\quad$ $(L+m)$-dimensional $r$-mesh cubes. That is we can cover $W_{k_1 \dots k_n}$
(and consequently $K_{k_1 \dots k_n}$) by
$$
\left(\left\lceil
\frac{d_r-d_l}{\rho_{k_1 \dots k_n}}
\right\rceil +1\right)^{L+m-1}
 \cdot
 (L+m-1) \cdot K_0 \cdot \frac{2(V-1)}{1-\gamma}
 \leq
 \frac{C_1}{r^{L+m-1}},
$$
 $r$-mesh cubes with a suitable constant $C_1$ independent of $r$ and $(k_1,\dots ,k_n)\in \mathcal{M}_r$, where in the last step we used \eqref{cs17}.
This completes the proof of the Lemma.
\end{proof}

As we already mentioned, Lemma \ref{cs20} implies that \eqref{cs41} holds. That is, using \eqref{cs43} and \eqref{cs42}, we can cover $E_{U,V}\cap D$ with at most 
$\frac{C_2}{r^{L+m-l+s_*}}\quad$ $(L+m)$-dimensional $r$-mesh cubes. As it holds for all $D\subset\Delta$ and $\Delta$ is the countable union of such cubes, we just obtained \eqref{cs50}. Since $U$ and $V$ were arbitrary, the proof of the Main Proposition follows.

\end{proof}

\section{Appendix}

The purpose of the Appendix is to make explicit an important consequence of Jordan and Rapaport's result
\cite[Theorem 1.1]{jordan2020dimension} related to the dimension of the attractors of graph-directed self-similar systems on $\mathbb{R}$.

Given a self-similar graph-directed IFS $\mathcal{F}$ with a strongly connected graph.
We will construct an ergodic and invariant measure $\mu$ on the symbolic space which is a Markov measure with the following property:
the  entropy of $\mu$ divided by the  Lypunov exponent of $\mu$ is equal to $\alpha (\mathcal{F})$, where $\alpha (\mathcal{F})$ was defined in  Definition \ref{cv65}. That is why $\mu$ can be considered as the natural measure for the 
self-similar graph-directed IFS $\mathcal{F}$.

Using this and the Jordan-Rapaport Theorem (Theorem \ref{cv68}),
we obtain in Corollary \ref{cs11},
that the Hausdorff dimension of the push-forward measure of $\mu$ 
is the minimum of $1$ and $\alpha (\mathcal{F})$ if ESC holds for the 
self-similar IFS $\mathcal{S}$ associated to $\mathcal{F}$.
This can be considered as a generalization of part (b) of Hochman Theorem (Theorem \ref{cr59}) for graph-directed self-similar systems.

\medskip

Throughout the Appendix we use the notation of Section \ref{ssec:22}.
Consider a self-similar graph-directed iterated function system $\mathcal{F}=\{ F_e(x)=r_e x+t_e\}_{e\in\mathcal{E}}$ with directed graph $\mathcal{G}=\{\mathcal{V},\mathcal{E} \}$.
We may identify
\begin{equation}\label{cr43}
  \mathcal{V}=[q]=\left\{1, \dots ,q\right\}, \mbox{ and }
  \mathcal{E}=\left\{e_1, \dots ,e_M\right\}.
\end{equation}
We assume that $\mathcal{G}$ is strongly connected.
Let $\mathcal{E}_{\infty}$ be the set of infinite length directed paths in $\mathcal{G}$.
Moreover, for $l\in [q]$ we introduce
$$\mathcal{E}_{\infty}^{\ell }
:=\left\{ 
\mathbf{e}=(e_1,e_2,\dots  )\in\mathcal{E}_{\infty}:
s(e_1)=\ell 
 \right\}.$$

We identify $\mathcal{E}_{\infty  }$ with $\Sigma _Z:=\left\{(i_1,i_2,\dots  )\in
[M]^{\mathbb{N}}
:
z_{i_k,i_{k+1}}=1,\: \forall k\in\mathbb{N}
\right\}$, where
 $Z=(z_{i,j})_{i,j=1}^m$ is an $M\times M$ matrix such that 
\begin{equation}\label{cr14}
z_{i,j}=
\left\{
\begin{array}{ll}
1
,&
\hbox{if $t(e_i)=s(e_j)$;}
\\
0
,&
\hbox{otherwise.}
\end{array}
\right.
\end{equation} 
Set $\Pi_{\mathcal{F}}(e_1,e_2,\dots  ):=\lim_{n\to\infty  }
F_{e_1 \dots   e_n}(0)$. It is clear that the non-empty compact  sets 
$\left\{ \Pi_{\mathcal{F}}(\mathcal{E}^{\ell }_{\infty  })  \right\}_{\ell \in[q]}$ satisfy \eqref{cv66}. Hence 
\begin{equation}
\label{cr24}
\Lambda _i=\Pi_{\mathcal{F}}(\mathcal{E}^{i }_{\infty  }),
\mbox{ and }
\Lambda=\Pi_{\mathcal{F}}(\mathcal{E}_{\infty  })
\end{equation}

We define the natural pressure function as
\begin{equation}\label{cs13}
\Phi (s)=\lim_{n\rightarrow\infty}\frac{1}{n}\log\sum_{\mathbf{e}\in\mathcal{E}^n} |F_{\mathbf{e}}(\Lambda_{t(e_n)})|^s,
\end{equation}
where the existence of the limit follows from the same standard argument used in the proof of Fact \ref{cr88}. We assumed that $\mathcal{G}$ is strongly connected, which implies that the matrix $C^{(s)}$ we defined in \eqref{ct50} is irreducible.
It is easy to see that by the Perron Frobenius Theorem we have
\begin{equation}\label{cr45}
\Phi (s)=\log \varrho (C^{(s)}),
\end{equation}
where $\varrho$ is the spectral radius. According to \cite[Theorem 2]{mauldin1988hausdorff} $\Phi (s)$ is continuous, strictly decreasing, $\Phi (0)> 0$, and $\lim_{s\rightarrow\infty}\Phi (s)=-\infty$, thus there exists a unique $0<s_{\mathcal{F}}$ for which $\Phi (s_{\mathcal{F}})=0$.
For a self-similar GDIFS with strongly connected graph we call  $s_{\mathcal{F}}$ the natural dimension of the system.
By Definition \ref{cv65} it is clear that
\begin{equation}\label{cr46}
s_{\mathcal{F}} = \alpha.
\end{equation}

\begin{definition}\label{cr35}
  We call $\mathcal{S}=\left\{S_k(x)=r_{e_k}x+t_{e_k}\right\}_{k=1}^{M}$ \texttt{the self-similar IFS associated with the self-similar GDIFS} $\mathcal{F}$. Clearly,
\begin{equation}\label{cr36}
S_{k}|_{\Lambda_{t(e_k)}}\equiv F_{e_k}|_{\Lambda_{t(e_k)}}.
\end{equation}
\end{definition}

Now we show that 
Part (a) of Theorem \ref{cs14} combined with Jordan Rapaport Theorem
 (Theorem \ref{cv68}) implies that under some conditions, part (b) of Hochman's theorem  holds for self-similar graph directed attractors as well, with $s_{\mathcal{F}}$ in the place of $\dim_{\rm S}\Lambda$.

\begin{corollary}\label{cs11}
Let $\mathcal{F}=\left\{F_e\right\}_{e\in \mathcal{E}}$ be a self-similar GDIFS on $\mathbb{R}$ with directed graph $\mathcal{G}=(\mathcal{V},\mathcal{E})$ and attractor $\Lambda$. Further, assume that $\mathcal{G}$ is strongly connected, and that the self-similar IFS $\mathcal{S}$ associated to $\mathcal{F}$ satisfies the ESC. Then
\begin{equation}\label{cs15}
\dim_{\rm H}\Lambda = \min \{ 1,s_{\mathcal{F}}\} .
\end{equation}
\end{corollary}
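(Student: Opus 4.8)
The plan is to establish the two inequalities $\dim_{\rm H}\Lambda\le\min\{1,s_{\mathcal F}\}$ and $\dim_{\rm H}\Lambda\ge\min\{1,s_{\mathcal F}\}$ separately. The upper bound is immediate: part (a) of Theorem \ref{cs14} gives $\dim_{\rm H}\Lambda\le\alpha$, and $\alpha=s_{\mathcal F}$ by \eqref{cr46}; combined with the trivial bound $\dim_{\rm H}\Lambda\le 1$ coming from $\Lambda\subset\mathbb R$, this yields $\dim_{\rm H}\Lambda\le\min\{1,s_{\mathcal F}\}$. All the work is in the lower bound, and the idea is to produce a measure on $\Lambda$ of dimension exactly $\min\{1,s_{\mathcal F}\}$ by imitating the construction of Subsection \ref{cs92} at the level of the graph $\mathcal G$ itself (rather than of the derived graph $\mathcal G_{\mathcal F}$).

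Concretely, I would build the natural Markov measure on the path space as follows. Since $\mathcal G$ is strongly connected, the matrix $C^{(\alpha)}$ of \eqref{ct50} is irreducible with spectral radius $1$; let $\underline{\mathfrak u}$, $\underline{\upnu}$ be its strictly positive left and right Perron--Frobenius eigenvectors for the eigenvalue $1$, normalised as in \eqref{cs90}. Form the stochastic matrix $P$ and its stationary vector $\mathbf p$ by the formulas of \eqref{cs88}, but now indexed by the \emph{edges} $\mathcal E$, using $s(e)$ and $t(e)$ of the edges in place of the vertices, and let $\mu$ be the associated one-sided Markov measure on $\Sigma_Z\cong\mathcal E_\infty$. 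Irreducibility of $P$ (inherited from strong connectedness of $\mathcal G$) makes $\mu$ ergodic, and extending $\mu$ by zero to all of $[M]^{\mathbb N}$ keeps it $\sigma$-invariant and ergodic by the argument used in Subsection \ref{cs92} (see \cite{walters2000introduction}).

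Next I would record the cylinder estimate: substituting \eqref{cs88} into the definition of $\mu$ gives constants $0<c_1<c_2$ with $c_1|r_{e_1}\cdots r_{e_n}|^{\alpha}\le\mu([e_1\dots e_n])\le c_2|r_{e_1}\cdots r_{e_n}|^{\alpha}$ for every admissible word, exactly as in \eqref{cs09}; together with the Shannon--McMillan--Breiman theorem and the ergodic theorem this yields $h(\mu)/\chi(\mu)=\alpha=s_{\mathcal F}$, just as in \eqref{cs77}. Then I would invoke the associated self-similar IFS $\mathcal S=\{S_k\}_{k=1}^M$ of Definition \ref{cr35}, which satisfies the ESC by hypothesis, and observe that $\Pi_{\mathcal S}|_{\Sigma_Z}=\Pi_{\mathcal F}$: composing \eqref{cr36} shows that $S_{i_1\dots i_n}$ and $F_{e_{i_1}\dots e_{i_n}}$ agree on the nonempty set $\Lambda_{t(e_{i_n})}$ and hence coincide as affine maps of $\mathbb R$, so their limits coincide. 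Consequently $\Pi_{\mathcal S\ast}\mu=\Pi_{\mathcal F\ast}\mu$ is a Borel probability measure supported on $\Lambda$ by \eqref{cr24}. Applying the Jordan--Rapaport theorem (Theorem \ref{cv68}) to $\mathcal S$ and the invariant ergodic measure $\mu$ gives $\dim_{\rm H}\Pi_{\mathcal S\ast}\mu=\min\{1,h(\mu)/\chi(\mu)\}=\min\{1,s_{\mathcal F}\}$, and since this measure lives on $\Lambda$ we conclude $\dim_{\rm H}\Lambda\ge\min\{1,s_{\mathcal F}\}$, which finishes the proof.

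The analytic heart of the argument — the cylinder estimate and the identity $h(\mu)/\chi(\mu)=\alpha$ — is a verbatim transcription of the computation already carried out in Subsection \ref{cs92}, so the only genuinely delicate points are bookkeeping: indexing the Markov chain by edges rather than vertices (the graph may carry multiple edges and loops), checking that extension by zero to the full shift preserves $\sigma$-invariance and ergodicity, and verifying the identification $\Pi_{\mathcal S}|_{\Sigma_Z}=\Pi_{\mathcal F}$. None of these is hard, but they are the places where an error would most easily slip in, so I expect them to be the main obstacle to a clean write-up rather than any substantive difficulty.
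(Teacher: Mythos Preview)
Your proposal is correct and follows essentially the same route as the paper: the upper bound via Theorem~\ref{cs14}(a), then a Perron--Frobenius Markov measure on the path space with $h/\chi=\alpha$, pushed forward through the associated self-similar IFS $\mathcal S$ and fed into the Jordan--Rapaport theorem.

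The one noteworthy difference is bookkeeping: you build the Markov chain directly on the edge alphabet $\mathcal E$ (equivalently on $\Sigma_Z\subset[M]^{\mathbb N}$), whereas the paper first builds it on the vertex alphabet $\mathcal V$ and then transports it to $\mathcal E_\infty$ via the map $\vartheta(v_1,v_2,\dots)=((v_1,v_2),(v_2,v_3),\dots)$. Your choice is actually the more robust one here: since the corollary is stated for graphs that may carry multiple edges between the same pair of vertices, the paper's ``natural bijection'' between $\mathrm{spt}(\mu)\subset\mathcal V^{\mathbb N}$ and $\mathcal E_\infty$ is only a genuine bijection when $\mathcal G$ is simple, and your edge-based formulation sidesteps that issue entirely. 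Apart from this, the two arguments are the same.
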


\begin{proof}\label{cs10}
  Since $\mathcal{G}$ is strongly connected, we can apply Theorem \ref{cs14} to obtain
\begin{equation}\label{cr49}
\dim_{\rm H}\Lambda \leq \min\{\alpha ,1\} \mbox{, since } \Lambda\subset\mathbb{R}.
\end{equation}
In order to prove the opposite inequality, first we define an ergodic invariant measure $\mu$ on $\mathcal{V}^{\mathbb{N}}$.  To do so, we recall the definition of $\alpha$ from \eqref{def:alpha} and using that we consider the irreducible matrix $A=(a_{i,j})_{i,j=1}^{q}:=C^{(\alpha)}$. Let $\mathbf{u}=(u_1, \dots ,u_q)$ and $\mathbf{v}=(v_1, \dots v_q)$
be the left and right eigenvector of $A$ respectively, corresponding to the leading eigenvalue $1$, normalized in a way that
\begin{equation}\label{cr44}
  \sum\limits_{k=1}^{q}u_k v_k=1,\quad
  \sum\limits_{k=1}^{q}v_k=1, \quad
  u_k,v_k>0.
\end{equation}
We define the stochastic matrix
$P=(p(i,j))_{i,j=1}^q$ and its stationary distribution $\mathbf{p}=(p_1, \dots ,p_q)$, related to the matrix $A$. That is
\begin{equation}\label{cr42}
  p(i,j):=\frac{a(i,j)v_j}{v_i},\quad
  p_i:=u_i \cdot v_i, \quad i,j\in[q].
\end{equation}
Clearly, $\mathbf{p}$ is a probability vector and we have
\begin{equation}\label{cr41}
  \mathbf{p}^T \cdot P=\mathbf{p}^T.
\end{equation}
With the help of these we can define the $(\mathbf{p},P)$ Markov measure $\mu$ on $n$-cylinders $[v_1,\dots ,v_n]\in\mathcal{V}^n$ as follows \cite[p. 22]{walters2000introduction}
\begin{equation}\label{cr40}
\mu ([v_1,\dots ,v_n]):= p_{v_1}\cdot p_{v_1,v_2}p_{v_2,v_3}\cdots p_{v_{n-1},v_n}.
\end{equation}
According to \cite[Theorem 1.19]{walters2000introduction} this extends to an ergodic, invariant measure on $\mathcal{V}^{\mathbb{N}}$.  There is a natural bijection between $\mathrm{spt}(\mu)\subset\mathcal{V}^{\mathbb{N}}$ and $\mathcal{E}_{\infty}$, where $\mathrm{spt}(\cdot)$ denotes the support of the measure and
\begin{equation}\label{cr34}
  \mathrm{spt}(\mu)=
  \left\{
  (v_1,v_2, \dots )\in \mathcal{V}^{\mathbb{N}}:
  (v_i,v_{i+1})\in\mathcal{E}, \mbox{ for all } i\in\mathbb{N}
  \right\}.
\end{equation}
For a $(v_1,v_2, \dots )\in\mathrm{spt}(\mu)$
we define
\[
  \vartheta(v_1,v_2,\dots):=((v_1,v_2),(v_2,v_3),\dots)\in\mathcal{E}_{\infty},
\]
where the inclusion holds since
 $\mu$ is only supported on vertex series $(v_1,v_2,\dots)$ $\in\mathcal{V}^{\mathbb{N}}$ for which $(v_i,v_{i+1})\in\mathcal{E}$ for all $i\geq 1$.
Thus the pushforward measure
$\nu:=\vartheta_*\mu$ onto $\mathcal{E}_{\infty}$ is also ergodic and invariant with respect to the left shift $\sigma$. Observe that there is a natural embedding $\Psi:\left(\mathcal{E}_{\infty },\sigma\right)\to \left([M]^{\mathbb{N}},\sigma\right)$ defined as $\Psi(e_{i_1},e_{i_2},\dots  ):=(i_1,i_2,\dots  )$, where we write $\sigma$
also for left shift on $[M]^{\mathbb{N}}$.
With the help of the matrix \eqref{cr14} we already defined 
$\Sigma_Z=\Psi(\mathcal{E}_{\infty})$. 
The push forward measure $\Psi_*\nu$  is an invariant ergodic measure supported on $\Sigma_Z$. 

Let $\Pi_{\mathcal{S}}:[M]^{\mathbb{N}}\to \mathbb{R}$ be the natural projection corresponding to $\mathcal{S}$.
All the symbolic spaces and their relations we introduced so far are summarized on these diagrams: 
\begin{equation}\label{cr25}
  \xymatrix{ \mathrm{spt}(\mu) \ar[r]^\sigma  \ar[d]_\vartheta   &
                  \mathrm{spt}(\mu)  \ar[d]^{\vartheta }\\ %
\mathcal{E}_\infty  \ar[r]_{\sigma} \ar[d]_\Psi  & \mathcal{E}_\infty  \ar[d]^\Psi \\%
\Sigma _Z  \ar[r]_\sigma & \Sigma _Z }
\quad
\xymatrix{\mathcal{E}_{\infty} \ar[r]^{\Psi } \ar[rd]_{\Pi_{\mathcal{F}}}  & %
                 \Sigma _Z \ar[d]^{\Pi_{\mathcal{S}}}\\ %
&\Lambda }%
\end{equation}
It is immediate that both diagrams are commutative.
This and \eqref{cr24} imply that
  \begin{equation}\label{cr50}
  \mathrm{spt}\left( \Pi_{\mathcal{S}\ast}(\Psi_*\nu)\right) =\Lambda.
\end{equation}                
Moreover, easy calculations give us the entropy and the Lyapunov exponent of $\Psi_*\nu$:
\begin{align}\label{cr39}
h(\Psi_*\nu)&=-\lim_{n\to\infty}\frac{1}{n}\log\nu (\mathbf{e}|_n)
=-\alpha\lim_{n\to\infty}\frac{1}{n}\log r_{\mathbf{e}|n} \\
\chi_{\Psi_*\nu}&=-\lim_{n\to\infty}\frac{1}{n}\log r_{\mathbf{e}|n},\nonumber
\end{align}
for $\nu$-almost all $\mathbf{e}=(e_1,e_2,\dots )\in\mathcal{E}_{\infty}$, where $r_{\mathbf{e}|n}=r_{e_1\dots e_n}$ stands for the contraction ratio of $F_{\mathbf{e}|n}$.
Hence we have
\begin{equation}\label{cr38}
\frac{h(\Psi_*\nu)}{\chi_{\Psi_*\nu}}=\alpha.
\end{equation}
 We assumed that the ESC holds for $\mathcal{S}$, and we have seen that $\Psi_*\nu$ is an invariant and ergodic probability measure.  Thus we can apply the Jordan Rapaport Theorem (Theorem \ref{cv68}) to obtain that
\begin{equation}\label{cr37}
\dim_{\rm H}\Lambda \geq \min\{\alpha ,1\},
\end{equation}
by substituting \eqref{cr50} and \eqref{cr38} into the theorem.

Together \eqref{cr37} and \eqref{cr49} yields
\begin{equation}\label{cr47}
\min\{\alpha ,1\}\leq \dim_{\rm H}\Lambda \leq \min\{\alpha ,1\}.
\end{equation}

According to \eqref{cr46} $s_{\mathcal{F}}=\alpha$, hence \eqref{cr47} implies
\[
\dim_{\rm H}\Lambda =\min\{s_{\mathcal{F}} ,1\}.
\]
\end{proof}

\begin{acknowledgement}
  R. Dániel Prokaj and Károly Simon acknowledge support from the grant
OTKA K123782.
\end{acknowledgement}

\bibliographystyle{abbrv}
\bibliography{CPLIFS_bib}

\end{document}